\newcommand{\dal}{\square}
\newcommand{\R}{{\mathbb R}}
\newcommand{\N}{{\mathbb N}}
\newcommand{\C}{{\mathbb C}}
\newcommand{\Sp}{{\mathbb S}}
\newcommand{\ve}{\varepsilon}
\newcommand{\pa}{\partial}
\newcommand{\jb}[1]{\left\langle #1 \right\rangle}
\DeclareMathOperator{\supp}{\rm supp}
\DeclareMathOperator{\real}{\rm Re}
\theoremstyle{plain}
\newtheorem{theorem}{Theorem}[section]
\newtheorem{corollary}[theorem]{Corollary}
\newtheorem{lemma}[theorem]{Lemma}
\theoremstyle{remark}
\theoremstyle{definition}
\numberwithin{equation}{section}
\title[Asymptotic behavior for systems of nonlinear wave equations]
{Asymptotic behavior for systems of nonlinear wave equations
with multiple propagation speeds in three space dimensions}
\author[S.~Katayama]{Soichiro Katayama}
\address{Department of Mathematics, Wakayama University, 930 Sakaedani, Wakayama 640-8510, Japan\\
Tel: +81-73-457-7343}
\email{katayama@center.wakayama-u.ac.jp}
\subjclass[2000]{Primary~35L70; Secondly~35L05, 35L15, 35B40}
\keywords{Nonlinear wave equation; multiple propagation speeds; null condition; asymptotic behavior}
\date{}
\begin{document}

\begin{abstract}
We consider the Cauchy problem for systems of nonlinear wave equations 
with multiple propagation speeds in three space dimensions.
Under the null condition for such systems, 
the global existence of small amplitude solutions is known. 
In this paper, 
we will show that the global solution 
is asymptotically free in the energy sense, 
by obtaining the asymptotic pointwise behavior of the derivatives
of the solution. Nonetheless we can also show that the pointwise behavior of the solution itself may be quite different from that of the free solution.
In connection with the above results, a theorem is also developed to characterize asymptotically free solutions for wave equations in arbitrary space dimensions.
\end{abstract}

\maketitle
\section{Introduction}
We consider the Cauchy problem for a system of nonlinear wave equations
of the following type with small initial data:
\begin{align}
 \label{OurSystem}
& \dal_{c_j} u_j(t,x)=F_j\bigl(\pa u (t,x), \pa^2 u(t,x)\bigr),
\quad (t,x)\in(0,\infty)\times \R^3,\\
 \label{OurData}
 & u_j(0,x)=\ve f_j(x), \ (\pa_t u_j)(0,x)=\ve g_j(x), \quad x\in \R^3
\end{align}
for $j=1, \ldots, N$,
where $\dal_c=\pa_t^2-c^2\Delta_x=\pa_t^2-c^2\sum_{k=1}^3\pa_{x_k}^2$ for $c>0$, and  
the propagation speeds $c_1,\ldots, c_N$ are positive constants,
while
$\pa u$ and $\pa^2 u$ denote the first and second derivatives of 
$u=(u_l)_{1\le l\le N}$, respectively. More specifically, we write 
$$
\pa u=(\pa_a u_l)_{1\le l\le N,\, 0\le a\le 3},\ \pa^2 u=(\pa_a\pa_b u_l)_{
1\le l\le N,\, 0\le a,b\le 3}
$$
with the notation
$$
 \pa_0:=\pa_t=\frac{\pa}{\pa t}, \text{ and }\pa_k:=\pa_{x_k}=\frac{\pa}{\pa x_k} 
\text{ for $k=1,2,3$.}
$$
We assume that $f=(f_j)_{1\le j\le N}, g=(g_j)_{1\le j\le N}\in C^\infty_0(\R^3;\R^N)$.
$\ve$ in \eqref{OurData} is a small and positive parameter.
$F_j$ can include nonlinear terms of higher order, but for simplicity we
assume that $F_j$ can be written as
\begin{equation}
\label{Non01}
F_j=\sum_{k, l=1}^N
\left(\sum_{a,b,b'=0}^3
p_{jkl}^{abb'} (\pa_a u_k)(\pa_b\pa_{b'} u_l)+\sum_{a,b=0}^3 q_{jkl}^{ab}(\pa_a u_k)(\pa_b u_l)\right)
\end{equation}
for $1\le j\le N$
with appropriate real constants $p_{jkl}^{abb'}$ and $q_{jkl}^{ab}$,
where $p_{jkl}^{a00}=0$ for $1\le j,k, l\le N$ and $0\le a\le 3$.
To assure the hyperbolicity, we assume the symmetry condition
\begin{equation}
\label{symmetry}
p_{jkl}^{abb'}=p_{lkj}^{abb'}=p_{jkl}^{ab'b}
\end{equation}
for $1\le j, k, l \le N$, and $0\le a,b,b' \le 3$.

For general quadratic nonlinearity, the classical solution to \eqref{OurSystem}--\eqref{OurData}
may blow up in finite time for some $(f,g)$ no matter how small $\ve $ is. 
Klainerman \cite{Kla86} introduced a sufficient condition for small data global existence
for the single speed case where $c_1=\cdots=c_N(=1)$ (see also Christodoulou \cite{Chr86}).
This sufficient condition, called the {\it null condition}, was extended to the multiple speed case: To simplify the description, we assume that the speeds are distinct, and that
\begin{equation}
\label{distinct speeds}
0<c_1<c_2<\cdots<c_N.
\end{equation}
Let the constants $p_{jkl}^{abb'}$ and $q_{jkl}^{ab}$ be from  \eqref{Non01}.
We say that
the null condition (associated with the speeds $c_1,\ldots, c_N$) is satisfied
if we have
\begin{equation}
\label{NullC}
\sum_{a,b,b'=0}^3 p_{jjj}^{abb'} X_aX_bX_{b'}=\sum_{a,b=0}^3 q_{jjj}^{ab}X_aX_b
=0,\quad X\in{\mathcal N}_j,\ 1\le j\le N,
\end{equation}
where 
${\mathcal N}_j:=\{X=(X_a)_{0\le a\le 3}\in \R^4;\, X_0^2-c_j^2\sum_{k=1}^3X_k^2=0\}$.
Small data global existence under the null condition for the multiple speed case
was obtained by Yokoyama~\cite{Yok00}
(see also Sideris-Tu \cite{SidTu01}, Sogge~\cite{Sog03}, and Hidano~\cite{Hid04};
see Kubota-Yokoyama \cite{Kub-Yok01}, the author \cite{Kat04:02},
and Metcalfe-Nakamura-Sogge~\cite{MetNaSo05b} for the case where nonlinearity of higher order depends not only on $(\pa u, \pa^2 u)$, but also on $u$).
We introduce the {\it null forms}
\begin{align}
\label{NullForm01}
Q_0(\varphi,\psi;c)=&(\pa_t\varphi)(\pa_t\psi)-c^2\sum_{k=1}^3 (\pa_k\varphi)(\pa_k\psi),\\
\label{NullForm02}
Q_{ab}(\varphi,\psi)=&(\pa_a\varphi)(\pa_b \psi)-(\pa_b\varphi)(\pa_a\psi),\quad
0\le a, b\le 3.
\end{align}
Then it is shown in \cite{Yok00} that the quadratic nonlinearity satisfying the null condition can be written as
\begin{equation}
\label{Non02}
F_j(\pa u,\pa^2 u)=N_j(\pa u, \pa^2 u)+R_j^{\rm I}(\pa u, \pa^2 u)
{}+R_j^{\rm II}(\pa u, \pa^2 u),
\end{equation}
where
\begin{align}
N_j=& \sum_{0\le |\alpha|\le 1} A_j^{\alpha} Q_0(u_j,\pa^\alpha u_j; c_j)
{}+\sum_{a,b,b'=0}^3 B_j^{abb'}Q_{ab}(u_j, \pa_{b'}u_j), \label{DefNullT}\\
R_j^{\rm I}=& \sum_{\{(k,l); k\ne l\}}\left(\sum_{a,b,b'=0}^3
p_{jkl}^{abb'} (\pa_a u_k)(\pa_b\pa_{b'} u_l)+\sum_{a,b=0}^3 q_{jkl}^{ab}(\pa_a u_k)(\pa_b u_l)\right), \label{DefNonResI}\\
R_j^{\rm II}=& \sum_{\{k; k\ne j\}}\left(\sum_{a,b,b'=0}^3
p_{jkk}^{abb'} (\pa_a u_k)(\pa_b\pa_{b'} u_k)+\sum_{a,b=0}^3 q_{jkk}^{ab}(\pa_a u_k)(\pa_b u_k)\right)
\label{DefNonResII}
\end{align}
with some constants $A_j^{\alpha}$ and $B_j^{abb'}$ ($p_{jkl}^{abb'}$ and $q_{jkl}^{ab}$ are from \eqref{Non01}).
Here $\pa=(\pa_a)_{0\le a\le 3}$, and $\alpha=(\alpha_a)_{0\le a\le 3}$ is a multi-index.
We refer to terms involved in $R^{\rm I}=(R_j^{\rm I})_{1\le j\le N}$ and 
$R^{\rm II}=(R_j^{\rm II})_{1\le j\le N}$
as the {\it non-resonant terms of type I} and {\it type II}, respectively.

Now we turn our attention to the asymptotic behavior of the global solutions.
Let $c>0$.
It is known that if $G\in L^1\bigl((0,\infty);L^2(\R^3)\bigr)$,
then the 
solution $v$ to $\dal_c v(t,x)=G(t,x)$ is
{\it asymptotically free in the energy sense}, that is to say,
there exists a 
solution $v^+$ to the free wave equation
$\dal_c v^+=0$ such that
$$
\lim_{t\to \infty} \|(v-v^+)(t)\|_{E,c}=0,
$$
where the energy norm $\|\varphi(t)\|_{E,c}$ is defined by
$$
\|\varphi(t)\|_{E,c}^2=\frac{1}{2}\int_{\R^3} \left(\frac{1}{c^2}|\pa_t\varphi(t,x)|^2+|\nabla_x \varphi(t,x)|^2\right)dx
$$
with $\nabla_x=(\pa_1,\pa_2,\pa_3)$.
For the single speed case, 
investigating the proof in \cite{Kla86}, we have
$$ 
 F_j(\pa u, \pa^2 u)\in L^1\bigl((0, \infty); L^2(\R^3)\bigr) 
$$
under the null condition because of the extra decay for the null forms, 
and as an immediate consequence we see that the global solution $u$ is 
asymptotically free in the energy sense. 
As for the multiple speed case, by 
the estimates obtained in \cite{Kub-Yok01} (cf.~\eqref{Basic01a} and \eqref{Basic01b} below), 
it is easy to see that
\begin{equation}
\label{AsympCond}
N_j(\pa u,\pa^2 u)+R_j^{\rm I}(\pa u,\pa^2 u)\in L^1\bigl((0,\infty);L^2(\R^3)\bigr)
\end{equation}
for the global solution $u$ to \eqref{OurSystem}--\eqref{OurData},
because we can expect some gain in the decay rate for the null forms and the non-resonant
terms of type I, compared to general quadratic nonlinearity (see \eqref{Est02} and \eqref{Est03} below).
Therefore, if $R^{\rm II}\equiv 0$ in \eqref{Non02}, then 
\eqref{AsympCond} implies that the 
global solution $u$
 is asymptotically free
in the energy sense; namely for $j=1,\ldots, N$,
there exists a 
solution $u_j^+$ of the free wave equation $\dal_{c_j} u_j^+=0$ such that
$\lim_{t\to\infty}\|(u_j-u_j^+)(t)\|_{E,c_j}=0$.
By contrast, there is no explicit gain in the decay rate for the non-resonant terms of type II unless they can be written in terms of the null forms, and
we cannot expect 
$R_j^{\rm II}(\pa u,\pa^2 u)\in L^1\bigl((0,\infty);L^2(\R^3)\bigr)$ in general (see \eqref{Est04} below),
although its influence is weak enough for the solution
to exist globally.
Hence it is not clear
whether the global solution $u$ is asymptotically free or not
when $R^{\rm II}$ is not written in terms of the null forms.
Our aim in this paper is to determine the asymptotic behavior
in the presence of the non-resonant terms of type II by modifying the method
developed in \cite{Kat11}.
\section{Main Results}
\label{MainResults}
\subsection{Asymptotically free functions in the energy sense.}
To begin with, we will characterize the asymptotically free
functions in the energy sense. 
We only need the three space dimensional result
for an application in this paper,
but we consider the general space dimensional case here for future applications.

Let $n$ be a positive integer.
Let $\dot{H}^1(\R^n)$ be the completion of $C^\infty_0(\R^n)$
with respect to the norm $\|\varphi\|_{\dot{H}^1(\R^n)}=\|\nabla_x\varphi\|_{L^2(\R^n)}$.
We put
\begin{equation}
X_n:=C\bigl([0,\infty); \dot{H}^1(\R^n)\bigr)
     \cap C^1\bigl([0,\infty); L^2(\R^n)\bigr).
\label{FunctionSpace}
\end{equation}
We say that a function $v=v(t,x)\in X_n$ is {\it asymptotically free
in the energy sense associated with the speed $c$},
if there is $(v_0^+, v_1^+)\in \dot{H}^1(\R^n)\times L^2(\R^n)$
such that
$$
\lim_{t\to\infty} \|v(t,\cdot)-v^+(t,\cdot)\|_{E,c}=0,
$$
where $v^+\in X_n$ is a unique solution to
$$
\dal_c v^+(t,x)=0,\quad (t,x)\in(0,\infty)\times \R^n
$$ 
with initial data
$\bigl(v^+(0),\pa_t v^+(0)\bigr)=(v_0^+, v_1^+)$,
and the energy norm (associated with the speed $c$) is given by
$$
\|\varphi(t,\cdot)\|_{E,c}^2:=\frac{1}{2}\int_{\R^n}
\left(\frac{1}{c^2}|\pa_t \varphi(t,x)|^2+|\nabla_x \varphi(t,x)|^2\right)dx.
$$
Here $\dal_c=\pa_t^2-c^2\sum_{k=1}^n \pa_k^2$ with the notation
$\pa_k=\pa/\pa x_k$ for $1\le k\le n$.
Note that we do not suppose that $v$ is a solution to some wave equation here.

For $c>0$ and $x\in \R^n\setminus\{0\}$, we define
\begin{equation}\label{speedconst}
\vec{\omega}_c(x)=\bigl({\omega}_{c,a}(x)\bigr)_{0\le a\le n}:=(-c, |x|^{-1}x).
\end{equation}
\begin{theorem} \label{AFSE}
Let $n\ge 2$ and $c>0$. A function $v\in X_n$
is asymptotically free in the energy sense associated with
the speed $c$, if and only if there is a function
$V=V(\sigma, \omega)\in L^2(\R\times \Sp^{n-1})$ such that
$$
\lim_{t\to\infty} \bigl\|\pa v(t,\cdot)-\vec{\omega}_c(\cdot){V}^\sharp(t,\cdot)
\bigr\|_{L^2(\R^n)}=0,
$$
where $\pa=(\pa_0, \pa_1, \ldots, \pa_n)$, and
$$
{V}^\sharp(t,x):=|x|^{-(n-1)/2}V(|x|-ct, |x|^{-1}x),\quad (t,x)\in (0,\infty)\times(\R^n\setminus\{0\}).
$$
\end{theorem}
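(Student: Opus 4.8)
The plan is to reduce both implications to a single fact about solutions of the free equation $\dal_c v^+=0$, namely the existence and surjectivity of the radiation (Friedl\"ander) field. Specifically, I would establish: for every $(v_0^+,v_1^+)\in\dot{H}^1(\R^n)\times L^2(\R^n)$ the solution $v^+\in X_n$ of $\dal_c v^+=0$ with that data has a profile $V\in L^2(\R\times\Sp^{n-1})$ with
\[
\lim_{t\to\infty}\bigl\|\pa v^+(t,\cdot)-\vec{\omega}_c(\cdot)V^\sharp(t,\cdot)\bigr\|_{L^2(\R^n)}=0 ,
\]
and the map $(v_0^+,v_1^+)\mapsto V$ is onto $L^2(\R\times\Sp^{n-1})$. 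I would also record the elementary identity obtained from polar coordinates together with $|\vec{\omega}_c(x)|^2=c^2+1$: for any $W\in L^2(\R\times\Sp^{n-1})$ and $t\ge0$,
\[
\bigl\|\vec{\omega}_c(\cdot)W^\sharp(t,\cdot)\bigr\|_{L^2(\R^n)}^2=(c^2+1)\int_{-ct}^{\infty}\int_{\Sp^{n-1}}|W(\sigma,\omega)|^2\,d\omega\,d\sigma ,
\]
so that $\|\vec{\omega}_c W^\sharp(t,\cdot)\|_{L^2(\R^n)}\le\sqrt{c^2+1}\,\|W\|_{L^2(\R\times\Sp^{n-1})}$ uniformly in $t$.

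Granting this, the theorem is pure triangle inequality. For the ``only if'' part, let $v^+\in X_n$ be a free solution with $\|(v-v^+)(t,\cdot)\|_{E,c}\to0$; by the definition of $\|\cdot\|_{E,c}$ this forces $\|\pa_t(v-v^+)(t)\|_{L^2}\to0$ and $\|\nabla_x(v-v^+)(t)\|_{L^2}\to0$, hence $\|\pa(v-v^+)(t)\|_{L^2(\R^n)}\to0$; taking $V$ for $v^+$ from the fact above,
\[
\|\pa v(t)-\vec{\omega}_c V^\sharp(t)\|_{L^2}\le\|\pa(v-v^+)(t)\|_{L^2}+\|\pa v^+(t)-\vec{\omega}_c V^\sharp(t)\|_{L^2}\longrightarrow0 .
\]
For the ``if'' part, given $V$ with $\|\pa v(t)-\vec{\omega}_c V^\sharp(t)\|_{L^2}\to0$, use surjectivity to pick a free solution $v^+\in X_n$ whose profile is $V$; then $\|\pa v^+(t)-\vec{\omega}_c V^\sharp(t)\|_{L^2}\to0$, so $\|\pa(v-v^+)(t)\|_{L^2}\to0$, which gives $\|(v-v^+)(t,\cdot)\|_{E,c}\to0$, i.e. $v$ is asymptotically free.

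It remains to prove the boxed fact about free solutions. By the rescaling $v^+(t,x)\mapsto v^+(t/c,x)$ one may assume $c=1$. I would first treat $(v_0^+,v_1^+)\in C^\infty_0(\R^n)\times C^\infty_0(\R^n)$ by the spherical-means representation of $v^+$ (using strong Huygens' principle for odd $n$ and the method of descent for even $n$, which is where $n\ge2$ is needed): near the light cone this yields $\pa_a v^+(t,x)=\omega_{1,a}(x)|x|^{-(n-1)/2}V(|x|-t,x/|x|)+E_a(t,x)$, where $V$ is a fixed multiple of the $\sigma$-derivative of the Friedl\"ander radiation field of $(v_0^+,v_1^+)$, the errors $E_a$ (angular derivatives and subleading terms of the expansion) satisfy $\|E_a(t,\cdot)\|_{L^2(\R^n)}=O(t^{-1})$ for such data, and the region away from the cone is negligible by finite propagation speed; moreover $\|V\|_{L^2(\R\times\Sp^{n-1})}$ is a fixed constant times $\|(v_0^+,v_1^+)\|_{\dot H^1\times L^2}$, so $(v_0^+,v_1^+)\mapsto V$ is, up to that constant, an isometry. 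The convergence for general energy data then follows by an $\ve/3$-argument: approximate $(v_0^+,v_1^+)$ in $\dot H^1\times L^2$ by $C^\infty_0$ data $(\widetilde v_0^+,\widetilde v_1^+)$; conservation of energy makes $\|\pa v^+(t)-\pa\widetilde v^+(t)\|_{L^2}$ small uniformly in $t$, and the uniform bound above controls $\|\vec{\omega}_c(V-\widetilde V)^\sharp(t)\|_{L^2}$. Finally, surjectivity holds because this isometry has dense range (the inverse Radon transform applied to smooth, compactly supported $V$ produces admissible data) and closed range (being an isometry onto the closure of that range), hence full range.

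The main obstacle is exactly this last part: producing the explicit asymptotic expansion of the free solution and, above all, controlling the $L^2(\R^n)$-size of the error terms $E_a$ well enough (uniformly in $t$) that it survives the density argument to general $\dot H^1\times L^2$ data. Once the radiation-field isometry and its asymptotics are available, the characterization itself is, as shown above, a formal consequence of the triangle inequality.
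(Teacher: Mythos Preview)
Your proposal is correct and follows essentially the same route as the paper: the paper also reduces Theorem~\ref{AFSE} to a radiation-field statement for free solutions (Lemma~\ref{Final01}), proves that statement by an $\ve/3$ argument combining energy conservation, the isometry property of the translation representation (Lemma~\ref{isometry}), and pointwise asymptotics for $C^\infty_0$ data (Lemma~\ref{PointwiseFriedlander}), and then deduces both directions of the theorem by triangle inequality using the surjectivity of the translation representation. The only notable difference is in the surjectivity step: your sketch (``inverse Radon transform applied to smooth, compactly supported $V$ produces admissible data'') is vaguer than the paper's construction, which works instead with the dense subspace of profiles whose one-dimensional Fourier transform vanishes near $\rho=0$ and builds the preimage explicitly via \eqref{RadonFourier} and \eqref{RadonFried}; you would need to be a little careful here, since not every smooth compactly supported $V$ arises as the derivative of the radiation field of Schwartz data.
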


This theorem for $n=3$ was implicitly proved in \cite{Kat11}.
The author believes that this result for general space dimensions is a new observation,
but it possibly has already appeared in some literature.
We will give the proof in Section~\ref{ProofAFSE}.
See \cite{KatMurSun11} for an application of Theorem~\ref{AFSE}
to semilinear wave equations in two space dimensions.
Another application in three space dimensions can be found in \cite{KatMatSun12}.
\subsection{Asymptotic behavior of the global solutions}
Next we examine the asymptotic behavior of the global solutions to
\eqref{OurSystem}--\eqref{OurData}.
For $h\in C^\infty(\R^3)$, we define its Radon transform ${\mathcal R}[h]$ 
by
$$
{\mathcal R}[h](\sigma, \omega)=\int_{y\cdot\omega=\sigma} h(y) dS(y),
$$
where $dS(y)$ denotes the surface element on the plane $\{y\cdot\omega=\sigma\}$.
Now, restricting our attention to the three space dimensional case,
we introduce the Friedlander radiation field (its definition
in general space dimensions will be given in \eqref{DefGeneralFried} below):
For $(\varphi, \psi)\in C^\infty_0(\R^3)\times C^\infty_0(\R^3)$, we define
the {\it Friedlander radiation field}
\begin{equation}
\label{Friedlander3D}
{\mathcal F}_0[\varphi,\psi](\sigma, \omega)=\frac{1}{4\pi}
\bigl({\mathcal R}[\psi](\sigma, \omega)-(\pa_\sigma {\mathcal R}[\varphi])(\sigma,\omega)\bigr)
\end{equation}
for $(\sigma, \omega)\in \R\times \Sp^2$. 
For $z\in \R^d$ with a positive integer $d$, we put $\jb{z}:=\sqrt{1+|z|^2}$. 
\begin{theorem}\label{Main01}
Suppose that \eqref{Non01}, \eqref{symmetry}, \eqref{distinct speeds},
and the null condition 
\eqref{NullC} are fulfilled. 
Let $\pa=(\pa_0, \pa_1, \pa_2, \pa_3)$, and $\vec{\omega}_c(x)$ be given by
\eqref{speedconst}.
\\
{\rm (1)}
We fix small $\delta>0$.
Then for any $f,g\in C^\infty_0(\R^3; \R^N)$ and sufficiently small $\ve>0$,
there is a function $P=(P_j)_{1\le j\le N}$ of
$(\sigma,\omega)\in \R\times \Sp^2$ 
such that
\begin{align}
|x| \pa u_j(t, x)=& \ve \vec{\omega}_{c_j}(x) P_j(|x|-c_jt, |x|^{-1}x)
\nonumber\\
&{}+O\bigl(\ve \jb{t+|x|}^{-1+\delta}\jb{c_jt-|x|}^{-\delta}\bigr)
\label{Main11}
\end{align}
for $(t,x)\in [0,\infty)\times(\R^3\setminus\{0\})$ and $1\le j\le N$,
where $u=(u_j)_{1\le j\le N}$ is 
the global solution to the Cauchy problem \eqref{OurSystem}--\eqref{OurData}.
Moreover we have
\begin{equation}
\label{Main12}
P_j(\sigma, \omega)=\pa_\sigma{\mathcal F}_0[f_j,c_j^{-1}g_j](\sigma, \omega)
+O\bigl(\ve \jb{\sigma}^{-1}\bigr),\quad
(\sigma, \omega)\in \R\times \Sp^2.
\end{equation}
{\rm (2)} 
We further assume that $R^{\rm II}=(R_j^{\rm II})_{1\le j\le N}$ has the null structure,
that is to say
$$
R_j^{\rm II}=\sum_{\{k;k\ne j\}}
\left(\sum_{0\le |\alpha|\le 1} A_{jk}^{\alpha} Q_0(u_k, \pa^\alpha u_k; c_k)
+\sum_{a, b, b'=0}^3 B_{jk}^{abb'}Q_{ab}(u_k, \pa_{b'}u_k)\right)
$$
for $1\le j\le N$ with some constants $A_{jk}^\alpha$ and $B_{jk}^{abb'}$.
Then for any $f,g\in C^\infty_0(\R^3,\R^N)$ and sufficiently small $\ve>0$,
there is a function $U=(U_j)_{1\le j\le N}$ of
$(\sigma, \omega)\in \R\times \Sp^2$ such that
\begin{align}
\label{Main31}
|x| u_j(t, x)= & \ve U_j(|x|-c_jt, |x|^{-1}x)+O\bigl(\ve \jb{t+|x|}^{-1}\log(2+t)\bigr), \\
\label{Main32}
|x| \pa u_j(t, x)= & \ve \vec{\omega}_{c_j}(x) (\pa_\sigma U_j)(|x|-c_jt, |x|^{-1}x)
\nonumber\\
& {}+O\bigl(\ve \jb{t+|x|}^{-1}\jb{c_jt-|x|}^{-1}\bigr)
\end{align}
for $(t,x)\in [0,\infty)\times(\R^3\setminus\{0\})$ and $1\le j\le N$,
where $u=(u_j)_{1\le j\le N}$ is 
the global solution to the Cauchy problem \eqref{OurSystem}--\eqref{OurData}.
Moreover we have
\begin{equation}
\label{Main33}
\pa_\sigma^m U_j(\sigma, \omega)=\pa_\sigma^m{\mathcal F}_0[f_j,c_j^{-1}g_j](\sigma, \omega)+O\bigl(\ve \jb{\sigma}^{-1-m}\bigr),\ (\sigma, \omega)\in \R\times \Sp^2
\end{equation}
for $m=0,1$ and $1\le j\le N$.
\end{theorem}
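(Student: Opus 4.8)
Our plan for proving Theorem~\ref{Main01} is to reduce, component by component, the asymptotic analysis of the global solution to a first‑order transport equation along the characteristic rays of $\dal_{c_j}$ and to integrate it, in the spirit of \cite{Kat11}; the new point is a quantitative treatment of the non‑resonant terms of type~II, which will be seen to influence $u_j$ but not $\pa u_j$. We begin with the global existence theorem and the a priori pointwise decay estimates of \cite{Yok00}, \cite{Kub-Yok01} (see \eqref{Basic01a}, \eqref{Basic01b} below): bounds of the form $|Z^\beta u_j(t,x)|\lesssim\ve\jb{t+|x|}^{-1}\jb{c_jt-|x|}^{-\kappa}$ for products $Z^\beta$ of the admissible vector fields $\pa_a$, the rotations $\Omega_{ab}=x_a\pa_b-x_b\pa_a$ and the scaling $S=t\pa_t+\sum_{k=1}^3 x_k\pa_k$ (the Lorentz boosts being unavailable here precisely because the speeds differ), together with the null decomposition \eqref{Non02}.

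Writing $r=|x|$, $\omega=|x|^{-1}x$, $w_j:=r\,u_j$ and passing to the null coordinates $\xi=r-c_jt$, $\eta=r+c_jt$ at frozen $\omega$, one has
$$
\pa_\xi\pa_\eta w_j=-\frac{1}{4c_j^2}\,H_j,\qquad
H_j:=r\,F_j(\pa u,\pa^2 u)+\frac{c_j^2}{r}\,\Delta_\omega u_j,
$$
with $\Delta_\omega$ the Laplace--Beltrami operator on $\Sp^2$; the profiles sought are $\ve P_j(\sigma,\omega)=\lim_{\eta\to\infty}\pa_\xi w_j|_{\xi=\sigma}$ and $\ve U_j(\sigma,\omega)=\lim_{\eta\to\infty}w_j|_{\xi=\sigma}$ (when the latter exists). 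Integrating the identity once in $\eta$ along the ray $\xi=\sigma$ shows that $\pa_\xi w_j$ converges whenever $H_j$ is integrable along that ray, with $|\pa_\xi w_j-\ve P_j|\lesssim\int_\eta^\infty|H_j|\,d\eta'$; integrating once more in $\xi$, starting from $\xi=+\infty$ where $u\equiv 0$ by finite propagation speed, shows that $w_j$ itself converges along the ray precisely when $H_j$ is integrable in the two variables $(\xi,\eta)$, and that $\pa_\eta w_j\to 0$ along each ray.

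For part~(1) we thus have to establish the one‑variable bound $\int_{|\sigma|}^{\infty}|H_j|(\sigma,\eta')\,d\eta'\lesssim\ve^2\jb{t+r}^{-1+\delta}\jb{\sigma}^{-\delta}$ along every $c_j$‑ray. Here $N_j$ is controlled by the null‑form estimates (the gain of $\jb{c_jt-r}/\jb{t+r}$, or the replacement of one $\pa$ by $\Omega$ or $S$ at the price of a factor $\jb{t+r}^{-1}$), $R_j^{\rm I}$ by the fact that $k\ne l$ forces at least one of the cones $\{c_kt=r\}$, $\{c_lt=r\}$ to lie at distance $\gtrsim\jb{t+r}$ from the $c_j$‑ray, $R_j^{\rm II}$ by the fact that $c_k\ne c_j$ makes $|c_kt-r|\gtrsim\jb{t}$ along the $c_j$‑ray, and the angular term by $|\Delta_\omega u_j|\lesssim\sum_{a,b}|\Omega_{ab}^2 u_j|$. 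Re‑expressing the derivatives through $w_j$ — so that $r\pa_t u_j=c_j(\pa_\eta-\pa_\xi)w_j$ and $r\pa_k u_j=\omega_k(\pa_\xi+\pa_\eta)w_j-\omega_k u_j+O(r^{-1}|\pa_\omega w_j|)$ — and using $\pa_\eta w_j\to0$ then yields \eqref{Main11}. To identify $P_j$, split the integrated formula into the contribution of the Cauchy data on $\{t=0\}$, which equals $\ve\,\pa_\sigma{\mathcal F}_0[f_j,c_j^{-1}g_j]$ by the classical representation of the radiation field of the free equation $\dal_{c_j}$ (the content behind \eqref{Friedlander3D}), and the contribution of $H_j$, which is $O(\ve^2\jb{\sigma}^{-1})$, the weight $\jb{\sigma}^{-1}$ arising because the ray $\xi=\sigma$ meets the cone $\{c_kt=r\}$ only near time $t\sim|\sigma|$; this gives \eqref{Main12}. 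The region far from $\{c_jt=r\}$ is treated directly from the a priori estimates, using that there $\jb{c_jt-r}\sim\jb{t+r}$ and $P_j(\sigma,\omega)=O(\ve\jb{\sigma}^{-1})$.

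For part~(2), the additional hypothesis turns $R_j^{\rm II}$ into a sum of null forms at the speeds $c_k$, so that near each cone $\{c_kt=r\}$ it acquires the gain $\jb{c_kt-r}/\jb{t+r}$; this upgrades the borderline $\|\jb{x}^{-1}R_j^{\rm II}(t)\|_{L^1(\R^3)}\sim\ve^2\jb{t}^{-1}$ of the general case to $\lesssim\ve^2\jb{t}^{-2}\log(2+t)$, hence makes $H_j$ integrable in $(\xi,\eta)$ up to a residual logarithm. Consequently $w_j$ converges along every $c_j$‑ray to $\ve U_j(\sigma,\omega)$ with $|w_j-\ve U_j|\lesssim\ve\jb{t+r}^{-1}\log(2+t)$, which is \eqref{Main31}; combining this with the convergence $\pa_\xi w_j\to\ve P_j$ already obtained in part~(1) gives $P_j=\pa_\sigma U_j$ and hence \eqref{Main32}, the error keeping the better one‑variable rate $\jb{t+r}^{-1}\jb{c_jt-r}^{-1}$ with no logarithm, while repeating the data‑versus‑$H_j$ splitting, now at the level of $w_j$ and $\pa_\xi w_j$, yields \eqref{Main33} for $m=0,1$. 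The main obstacle is precisely this dichotomy for $R^{\rm II}$: one must show that, although $R^{\rm II}\notin L^1\bigl((0,\infty);L^2(\R^3)\bigr)$, it still contributes a convergent quantity to $r\pa u_j$ near $\{c_jt=r\}$ (the one‑variable integrability, which does hold) while in general contributing a logarithmically divergent two‑variable integral — so that $ru_j$ need not converge, which is exactly why \eqref{Main11} cannot be stated for $u_j$ itself — and then, under the null hypothesis of part~(2), extract the sharp logarithmic loss in \eqref{Main31}. The remaining ingredients — the null‑form and speed‑separation estimates, the behaviour near $r=0$ and $t=0$, and the matching of the leading profiles with the Friedlander radiation field — are technical but follow established patterns.
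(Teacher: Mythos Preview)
Your plan for part~(1) is essentially the paper's proof: reduce $r\dal_{c_j}u_j$ to a transport equation $\pa_+^{(c_j)}v_j=G_j$ for $v_j=D_-^{(c_j)}(ru_j)$, estimate $G_j$ term by term (null-form gain for $N_j$, speed separation for $R_j^{\rm I}$, and the observation that $|c_kt-r|\sim\jb{t}$ along the $c_j$-ray for $R_j^{\rm II}$), and integrate along the ray via what the paper packages as Lemma~\ref{KeyLemma}. The paper subtracts the free solution $\ve u_j^0$ at the outset and applies Lemma~\ref{PointwiseFriedlander} to it, whereas you keep $u_j$ whole and split off the data contribution afterwards; these are equivalent.

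For part~(2), however, there is a real gap. You invoke only the estimates \eqref{Basic01a}--\eqref{Basic01b} from \cite{Yok00,Kub-Yok01} and argue that the null structure of $R_j^{\rm II}$ by itself upgrades the rates. It does not. With \eqref{Basic01a} one has only $|r^{-1}\Delta_\omega u_j^1|\lesssim\ve^2\jb{t+r}^{-2+\delta}\jb{c_jt-r}^{-\delta}$ for the angular term in $H_j$; integrating along the $c_j$-ray then yields at best $\ve^2\jb{t+r}^{-1+\delta}\jb{c_jt-r}^{-\delta}$, the same rate as in part~(1), not the $\jb{t+r}^{-1}\jb{c_jt-r}^{-1}$ required for \eqref{Main32}. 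Similarly, \eqref{Basic01b} gives only $|v_j|\lesssim\ve^2\jb{c_jt-r}^{-1}$, hence $|P_j^1(\sigma,\omega)|\lesssim\ve\jb{\sigma}^{-1}$, not the $\jb{\sigma}^{-2}$ needed for \eqref{Main33} with $m=1$; without the latter the integral $U_j^1(\sigma,\omega)=-\int_\sigma^\infty P_j^1(\lambda,\omega)\,d\lambda$ is not even defined. The missing step is a round of \emph{improved} a~priori estimates,
\[
|u_k|_2\lesssim\ve\jb{t+r}^{-1}\jb{c_kt-r}^{-1},\qquad
|\pa u_k|_1\lesssim\ve\jb{r}^{-1}\jb{c_kt-r}^{-2},
\]
which the paper imports from \cite{Kat05} and \cite{KatYok06} (see \eqref{Basic02a}--\eqref{Basic02b}); these hold precisely because the \emph{entire} nonlinearity, including $R^{\rm II}$, now has null structure. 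The null gain for $R_j^{\rm II}$ that you use improves that term's contribution to $H_j$, but not the angular Laplacian term, and not the decay of $v_j$ itself. A smaller point: the paper obtains \eqref{Main31} by integrating $\pa_r(ru_j^1)-\ve\,\pa_\sigma U_j^1$ in $r$ at fixed $t$ from $r$ to $\infty$ (using $ru_j^1\to 0$ as $r\to\infty$), rather than by a second integration in $\xi$; this avoids the boundary bookkeeping at $t=0$ implicit in your scheme.
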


By \eqref{Main11} we see that the asymptotic pointwise behavior
of $\pa u_j$ 
is similar to
that of the derivatives of the free solution 
(see Lemma~\ref{PointwiseFriedlander} below). 
Combining \eqref{Main11} and \eqref{Main12} with Theorem~\ref{AFSE}, 
we see that the solution $u$ is 
asymptotically free in the energy sense.
\begin{corollary}\label{AsympFreeEnergy}
Suppose that \eqref{Non01}, \eqref{symmetry}, \eqref{distinct speeds},
and the null condition 
\eqref{NullC} are fulfilled. 
Then, for any $f,g\in C^\infty_0(\R^3; \R^N)$ and sufficiently small $\ve>0$,
there exist
$f^+=(f_j^+)_{1\le j\le N}\in \dot{H}^1(\R^3;\R^N)$ and $g^+=(g_j^+)_{1\le j\le N}
\in L^2(\R^3;\R^N)$ 
such that
$$
 \lim_{t\to\infty}\|(u_j-u_j^+)(t)\|_{E,c_j}=0,\quad 1\le j\le N,
$$
where $u=(u_j)_{1\le j\le N}$ is 
the global solution to the Cauchy problem \eqref{OurSystem}--\eqref{OurData}, and $u_j^+$ is the solution to $\dal_{c_j}u_j^+=0$ with initial data
$(u_j^+, \pa_t u_j^+)=(f_j^+, g_j^+)$ at $t=0$.
\end{corollary}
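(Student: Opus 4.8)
The plan is to deduce the corollary from Theorem~\ref{Main01}~(1) and the characterization of asymptotic freeness in Theorem~\ref{AFSE}, applied to each component $u_j$ separately with speed $c=c_j$ and space dimension $n=3$. Since $u$ is the global solution of \eqref{OurSystem}--\eqref{OurData} with data in $C_0^\infty$, finite propagation speed shows that $u_j(t,\cdot)\in C_0^\infty(\R^3)\subset\dot H^1(\R^3)$ for each $t$, and standard energy theory gives $u_j\in X_3=C([0,\infty);\dot H^1(\R^3))\cap C^1([0,\infty);L^2(\R^3))$; thus Theorem~\ref{AFSE} applies to $v:=u_j$. Let $P_j$ be as in Theorem~\ref{Main01}~(1), applied with some fixed $\delta\in(0,1/2)$, and put $V_j:=\ve P_j$. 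Then $V_j^\sharp(t,x)=\ve|x|^{-1}P_j(|x|-c_jt,|x|^{-1}x)$, and dividing \eqref{Main11} by $|x|$ gives the pointwise bound $\pa u_j(t,x)-\vec\omega_{c_j}(x)V_j^\sharp(t,x)=O\bigl(\ve|x|^{-1}\jb{t+|x|}^{-1+\delta}\jb{c_jt-|x|}^{-\delta}\bigr)$ on $[0,\infty)\times(\R^3\setminus\{0\})$.

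It then remains to verify the two requirements of Theorem~\ref{AFSE}. For $V_j\in L^2(\R\times\Sp^2)$: by \eqref{Main12}, $V_j=\ve\,\pa_\sigma\mathcal{F}_0[f_j,c_j^{-1}g_j]+O(\ve^2\jb\sigma^{-1})$; since $f_j,g_j\in C_0^\infty(\R^3)$, the Radon transforms $\mathcal{R}[f_j],\mathcal{R}[g_j]$, hence $\mathcal{F}_0[f_j,c_j^{-1}g_j]$ and all its $\sigma$-derivatives, are smooth in $(\sigma,\omega)$ and supported in a fixed compact $\sigma$-interval, so $\pa_\sigma\mathcal{F}_0[f_j,c_j^{-1}g_j]\in L^2(\R\times\Sp^2)$, while $\jb\sigma^{-1}\in L^2(\R)$ handles the error term. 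For the convergence $\|\pa u_j(t,\cdot)-\vec\omega_{c_j}(\cdot)V_j^\sharp(t,\cdot)\|_{L^2(\R^3)}\to0$: both terms lie in $L^2(\R^3)$ (the first because $u_j\in X_3$, the second because $\|V_j^\sharp(t,\cdot)\|_{L^2(\R^3)}\le\|V_j\|_{L^2(\R\times\Sp^2)}$ by passing to polar coordinates), and by the pointwise bound above, again in polar coordinates (where $r^2\,dr$ cancels the weight $r^{-2}$), $$\bigl\|\pa u_j(t,\cdot)-\vec\omega_{c_j}(\cdot)V_j^\sharp(t,\cdot)\bigr\|_{L^2(\R^3)}^2\le C\ve^2\int_0^\infty\jb{t+r}^{-2+2\delta}\jb{c_jt-r}^{-2\delta}\,dr.$$ This integral converges for each $t$ because $2\delta<1$, and splitting it into $\{r\le c_jt/2\}$, $\{c_jt/2<r<2c_jt\}$, and $\{r\ge2c_jt\}$ and using $0<2\delta<1$ in each region shows it is $O(\jb t^{-1})$, hence tends to $0$.

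Applying Theorem~\ref{AFSE} with $n=3$, $c=c_j$ and the above $V_j$, we conclude that $u_j$ is asymptotically free in the energy sense associated with $c_j$: there is $(f_j^+,g_j^+)\in\dot H^1(\R^3)\times L^2(\R^3)$ with $\lim_{t\to\infty}\|(u_j-u_j^+)(t)\|_{E,c_j}=0$, where $u_j^+$ solves $\dal_{c_j}u_j^+=0$ with data $(f_j^+,g_j^+)$ at $t=0$. Carrying this out for $j=1,\dots,N$ and collecting the data into $f^+=(f_j^+)_{1\le j\le N}$, $g^+=(g_j^+)_{1\le j\le N}$ proves the corollary. The only genuinely quantitative point is the radial integral in the second paragraph: the gain $-1+\delta$ rather than $-1$ in $t+|x|$ away from the cone, together with only $O(\jb{c_jt-|x|}^{-\delta})$ decay transverse to it, is still just enough — once integrated against $r^2\,dr/r^2=dr$ — to beat the $|x|^{-1}$ weight and yield an $O(\jb t^{-1/2})$ rate for the energy-norm difference; I do not expect any essential obstruction beyond organizing that estimate.
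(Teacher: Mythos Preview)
Your proof is correct and follows essentially the same route as the paper: use \eqref{Main12} to verify $\ve P_j\in L^2(\R\times\Sp^2)$, pass \eqref{Main11} to polar coordinates to bound $\|\pa u_j(t,\cdot)-\ve\vec\omega_{c_j}(\cdot)P_j^\sharp(t,\cdot)\|_{L^2(\R^3)}^2$ by $C\ve^2\int_0^\infty\jb{t+r}^{-2+2\delta}\jb{c_jt-r}^{-2\delta}\,dr=O(\jb t^{-1})$, and then invoke Theorem~\ref{AFSE}. Your additional care in checking $u_j\in X_3$, in fixing $\delta<1/2$, and in splitting the radial integral into three regions merely fills in details that the paper leaves implicit.
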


Theorem~\ref{Main01} and Corollary~\ref{AsympFreeEnergy} will be proved 
in Section~\ref{P01},
after giving some preliminaries in Section~\ref{P00}.

When $R^{\rm II}$ has the null structure, we see from \eqref{Main31} 
that the solution $u_j$ itself also behaves 
similarly to the free solution.
The next result shows that 
the lack of the estimate corresponding to \eqref{Main31}
in the presence of $R^{\rm II}$ without the null structure is inevitable.
\begin{theorem}\label{NotFree}
Let $0<c_1<c_2$, and let $A_1, A_2$ be real constants.
We consider the Cauchy problem for
\begin{equation}
\label{OurExample}
\begin{cases}
\dal_{c_1}u_1=A_1(\pa_t u_2)^2,\\
\dal_{c_2}u_2=A_2(\pa_t u_1)^2
\end{cases}
\text{ in $(0,\infty)\times \R^3$}.
\end{equation}
If $A_1\ne 0$, then there exist $(f,g)\in C^\infty_0(\R^3;\R^2)\times C^\infty_0(\R^3;\R^2)$, 
$M>0$, $T_0\ge 2$, and $C>0$ such that 
\begin{equation}
\label{Main41}
C^{-1}\ve\bigl(1+\ve\log(2+t)\bigr) \le |r u_1(t,x)|\le C\ve 
\bigl(1+\ve\log(2+t)\bigr)
\end{equation}
for any $(t,x)$ satisfying $T_0\le c_1t\le r(=|x|)\le c_1t+M$,
where $u=(u_1, u_2)$ is the global solution to \eqref{OurExample} with initial data $u=\ve f$ and $\pa_t u=\ve g$ at $t=0$ for sufficiently small $\ve(>0)$.
\end{theorem}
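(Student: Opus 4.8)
The plan is to split $u_1=\ve w_1+z_1$, where $w_1$ solves $\dal_{c_1}w_1=0$ with data $(f_1,g_1)$ at $t=0$ and $z_1$ solves $\dal_{c_1}z_1=A_1(\pa_t u_2)^2$ with zero data, and to exhibit a nonvanishing bounded contribution from $r w_1$ together with a genuine $\log t$ growth from $r z_1$ near the cone $r=c_1t$. The system \eqref{OurExample} is of the form \eqref{Non01} with all $p_{jkl}^{abb'}=0$ and only the off--diagonal coefficients $q_{122}^{00}=A_1$ and $q_{211}^{00}=A_2$ nonzero, so \eqref{symmetry} is trivial and the null condition \eqref{NullC} holds vacuously; hence Theorem~\ref{Main01}(1) applies and gives, for $j=1,2$,
\[
|x|\,\pa u_j(t,x)=\ve\,\vec{\omega}_{c_j}(x)\,P_j(|x|-c_jt,|x|^{-1}x)+O\bigl(\ve\jb{t+|x|}^{-1+\delta}\jb{c_jt-|x|}^{-\delta}\bigr),
\]
with $P_j=\pa_\sigma{\mathcal F}_0[f_j,c_j^{-1}g_j]+O(\ve\jb{\sigma}^{-1})$. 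I take $f\equiv0$ and $g=(g_1,g_2)$ radial (so $u$ is radial), with $g_1=\eta h$, $\eta:=\mathrm{sign}(A_1)$, $h$ a fixed radial bump with $h>0$ on $\{|x|<1\}$ and $h=0$ on $\{|x|\ge1\}$, and $g_2$ a fixed nonzero radial bump; I fix $M=1/2$. By the classical Friedlander asymptotics for the free wave equation, $r w_1(t,r\omega)={\mathcal F}_0[0,c_1^{-1}\eta h](r-c_1t,\omega)+O(\jb{t}^{-1})$, and ${\mathcal F}_0[0,c_1^{-1}\eta h](\sigma,\omega)=\tfrac{\eta}{4\pi c_1}{\mathcal R}[h](\sigma)$ has sign $\eta$ and absolute value $\ge\alpha_0:=\tfrac{1}{4\pi c_1}{\mathcal R}[h](M)>0$ for $\sigma\in[0,M]$ and all $\omega$.

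For $z_1$ I use the three--dimensional Duhamel formula; passing to polar coordinates about $x$ and substituting $s=t-c_1^{-1}|y-x|$ yields
\[
r\,z_1(t,x)=\frac{A_1r}{4\pi}\int_0^t(t-s)\int_{\Sp^2}(\pa_t u_2)^2\bigl(s,x+c_1(t-s)\omega'\bigr)\,d\omega'\,ds.
\]
With $y=x+c_1(t-s)\omega'$, I split $(\pa_t u_2)^2=\ve^2 c_2^2|y|^{-2}F_2(|y|-c_2s)^2+R$, where $F_2:=\pa_\sigma{\mathcal F}_0[0,c_2^{-1}g_2]$ is compactly supported, odd, and direction--independent by radiality, and $R$ collects the remainders. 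For the leading term I change variables in the inner integral from $\omega'$ to $\lambda=|y|$ (after integrating the azimuthal angle, $d\omega'=\tfrac{2\pi\lambda}{rc_1(t-s)}\,d\lambda$, with $\lambda$ ranging over $[\,|r-c_1(t-s)|,\,r+c_1(t-s)\,]$), reducing its contribution to $\tfrac{A_1\ve^2 c_2^2}{2c_1}\int_0^t\Phi(s)\,ds$ with $\Phi(s)=\int\lambda^{-1}F_2(\lambda-c_2s)^2\,d\lambda$ over that range. For $r-c_1t=\sigma_0\in[0,M]$ and $t$ large the range equals $[\sigma_0+c_1s,\,2c_1t-c_1s+\sigma_0]$, so $\Phi(s)=0$ once $s$ passes $\tfrac{2c_1}{c_1+c_2}t+O(1)$, while for $s$ in a bulk interval $[\kappa,\tfrac{2c_1}{c_1+c_2}t-\kappa]$ (a suitable fixed $\kappa$) the full support of $F_2^2$ lies inside, giving $\Phi(s)=\int_{\R}F_2(\sigma)^2(\sigma+c_2s)^{-1}\,d\sigma=\tfrac{I_2}{c_2s}+O(s^{-3})$, where $I_2:=\int_{\R}F_2^2\,d\sigma>0$ by the choice of $g_2$ (the $s^{-2}$ term vanishes since $\sigma F_2(\sigma)^2$ is odd). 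The two $O(1)$--length boundary layers contribute only $O(1)$ to $\int_0^t\Phi$ (near $s=0$ one uses $F_2(\sigma)^2=O(\sigma^2)$; near the cutoff $\lambda\sim t$). Hence $\int_0^t\Phi(s)\,ds=\tfrac{I_2}{c_2}\log t+O(1)$, and the leading term equals $b\,\ve^2\log t+O(\ve^2)$ with $b:=\tfrac{A_1 c_2 I_2}{2c_1}\neq0$ and $\mathrm{sign}(b)=\mathrm{sign}(A_1)$.

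The main obstacle is to show that the Duhamel integral of $R$ is $O(\ve^2)$ uniformly in $t$, with no logarithm; the danger is the non--compactly--supported tail $\jb{c_2s-|y|}^{-\delta}$ in the remainder together with the singular weight $|y|^{-2}$. I would argue in two steps. For $s$ below a fixed constant $s_1$ I discard the asymptotics and use the basic decay estimate $\|\pa u_2(s,\cdot)\|_{L^\infty}\le C\ve$ (valid on bounded time intervals); here the geometry is decisive, since the backward $c_1$--cone from $(t,x)$ with $|x|\ge c_1t$ stays away from the origin (its distance to $0$ along the cone is $\sigma_0+c_1s\ge0$), and this part of the $s$--integral is $O(\ve^2)$. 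For $s\ge s_1$ I substitute each piece of $R$ --- namely $\ve^2|y|^{-2}(P_2^2-F_2^2)$, $\ve|y|^{-2}P_2E_2$, and $|y|^{-2}E_2^2$, where $E_2(s,y)=O(\ve\jb{s+|y|}^{-1+\delta}\jb{c_2s-|y|}^{-\delta})$ is the remainder in the $\pa_t$--component of the asymptotics for $|x|\pa u_2$ --- into the same $\omega'\to\lambda$ change of variables; on the band $\lambda\approx c_2s$ each such piece is, after the $\lambda$--integration, bounded by $C\ve^2 s^{-p}/(r(t-s))$ for some $p>1$ (the decay in $s$ comes from $|y|^{-2}\sim(c_2s)^{-2}$ together with either the factor $\jb{s+|y|}^{-1+\delta}$, or, for the pieces coming from $P_2-F_2$ and using the compact support of $F_2$, an extra power of $\ve$), so that $\int_{s_1}^{ct}ds$ produces only $O(\ve^2)$. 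This is exactly where the precision of Theorem~\ref{Main01}(1) is needed.

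Combining the above, for $T_0\le c_1t\le r\le c_1t+M$ we obtain
\[
r\,u_1(t,x)=\ve\,{\mathcal F}_0[0,c_1^{-1}\eta h](r-c_1t,|x|^{-1}x)+b\,\ve^2\log t+O(\ve\jb{t}^{-1})+O(\ve^2).
\]
Since the radiation field has constant sign $\eta=\mathrm{sign}(b)$ on $\sigma_0\in[0,M]$, the two leading terms do not cancel. Choosing $T_0$ large enough (depending only on $h$ and $g_2$) to absorb $O(\ve\jb{t}^{-1})$ into $\tfrac12\alpha_0\ve$, and then $\ve$ small to absorb $O(\ve^2)$ into the remaining terms, we get $|ru_1(t,x)|\ge c_0\,\ve(1+\ve\log(2+t))$; the upper bound $|ru_1(t,x)|\le C_0\,\ve(1+\ve\log(2+t))$ then follows at once by the triangle inequality, using $\log t\asymp\log(2+t)$ for $t\ge T_0\ge2$. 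This is \eqref{Main41}, after adjusting the constants.
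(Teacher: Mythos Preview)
Your approach is sound and reaches the conclusion, but it differs from the paper's and contains one imprecise claim. After passing to radial data, both routes land on the same one--dimensional Duhamel integral (your $\omega'\to\lambda$ substitution is exactly the reduction $v_1=ru_1^*$ used in the paper). The difference lies in how $(\pa_t u_2)^2$ is bounded below on the backward $c_1$--cone. The paper first reduces without loss of generality to $A_1>0$, $A_2\ge0$, $c_1=1$; since $A_2\ge0$ makes the nonlinear Duhamel contribution to $\pa_t(r u_2^*)$ nonnegative, one obtains the \emph{exact} pointwise inequality $\pa_t(r u_2^*)(\tau,\lambda)\ge\ve h_2(\lambda-c_2\tau)$ for $\lambda\ge c_2\tau$, with $h_2(\sigma)=\tfrac{\sigma}{2}g_2^*(|\sigma|)$. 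From there the $\log t$ lower bound is a direct computation over the sub-region $\{(\tau,\lambda):\,c_2\tau\le\lambda\le c_2\tau+M\}$ of the backward cone, with no remainder to control and no appeal to Theorem~\ref{Main01}. This is more elementary and robust than your route.

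Your alternative, substituting the asymptotics of Theorem~\ref{Main01}(1) for $\pa_t u_2$ and estimating the remainder, is legitimate but heavier, and one of your claims is not quite right: you assert the Duhamel integral of $R$ is $O(\ve^2)$ uniformly in $t$, with each piece giving $C\ve^2 s^{-p}$, $p>1$, after $\lambda$--integration. But the piece $\ve^2c_2^2|y|^{-2}\cdot2F_2(P_2-F_2)$ shares the compact $\sigma$--support of the leading term and is only $O(\ve)$ smaller pointwise; after $\lambda$--integration it contributes $O(\ve^3 s^{-1})$, hence $O(\ve^3\log t)$ after the $s$--integration, not $O(\ve^2)$. (Your parenthetical ``an extra power of~$\ve$'' is correct, but that extra power does not turn $s^{-1}$ into $s^{-p}$ with $p>1$.) The final conclusion survives, since $O(\ve^3\log t)$ is absorbed into $\tfrac12|b|\ve^2\log t$ for small $\ve$, but your displayed error term should read $O(\ve^2)+O(\ve^3\log t)$ rather than a bare $O(\ve^2)$.
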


This theorem will be proved in Section~\ref{P02}.
From this result, we see that there is some loss in the decay rate of $u_1$: 
\eqref{Main41} says that $u_1(t,x)$ decays like $(1+t)^{-1}\log(2+t)$ 
along the ray $r=c_1t+\sigma$ for $0\le \sigma \le M$, while
\eqref{Main31} implies that $u_j$ decays like $(1+t)^{-1}$ 
and has the same decay property as the free solution along the ray $r=c_jt+\sigma$
if $R^{\rm II}$ has the null structure. Accordingly we find that 
estimates like \eqref{Main31} cannot hold in the presence of the non-resonant terms of type II without the null structure.

To sum up the results, an interesting character
of the non-resonant terms of type II is revealed:
Their effect is weak enough
for the solution to exist globally (Theorem~1.1 in \cite{Yok00}) and 
to be asymptotically free in the energy sense (Corollary~\ref{AsympFreeEnergy});
however it is strong enough to affect the decay rate of the solution $u$ itself
(Theorem~\ref{NotFree}),
though that of $\pa u$ is not affected (Theorem~\ref{Main01}).

Throughout this paper, various positive constants are denoted by the same letter $C$.
Thus the actual value of $C$ may change line by line.
\section{Asymptotics for Homogeneous Wave Equations}
\label{ProofAFSE}

Our aim in this section is to prove Theorem~\ref{AFSE}.
Most of the necessary materials for this purpose are rather standard, 
but we give the details and proofs to make this section self-contained.
In what follows, we use the formal expression
of writing distributions as if they are functions.
\subsection{Solutions to the free wave equation}
We put $H_0(\R^n)=\dot{H}^1(\R^n)\times L^2(\R^n)$, and we define
\begin{equation}
\label{DefNormH0}
\|(\varphi,\psi)\|_{H_0(\R^n)}^2:=\frac{1}{2}\left(
\|\nabla_x \varphi\|_{L^2(\R^n)}^2+\|\psi\|_{L^2(\R^n)}^2\right).
\end{equation}
$H_0(\R^n)$ can also be understood as the completion of $C^\infty_0(\R^n)\times C^\infty_0(\R^n)$ with respect to the norm $\|\cdot\|_{H_0(\R^n)}$.
Let $c>0$, and consider the Cauchy problem for the free wave equation
\begin{align}
& \dal_c w(t,x)=0, \quad (t,x)\in (0,\infty)\times \R^n,
\label{FreeWaveEq}\\
& \bigl(w(0,x), (\pa_t w)(0,x)\bigr)=\bigl(w_0(x), w_1(x)\bigr),\quad x\in \R^n.
\label{FreeData}
\end{align}
Let $X_n$ be defined by \eqref{FunctionSpace}.
It is known that for $(w_0, w_1)\in H_0(\R^n)$, \eqref{FreeWaveEq}--\eqref{FreeData} admits a unique solution $w\in X_n$,
and we have the conservation of the energy
$$
\|w(t,\cdot)\|_{E,c}=\|w(0,\cdot)\|_{E,c}=\|(w_0, c^{-1}w_1)\|_{H_0(\R^n)}.
$$

For $\real a>-1$, let $\chi_+^a$ be defined by
$$
\chi_+^a(s):=\begin{cases}
\displaystyle \frac{s^a}{\Gamma(a+1)}, & s> 0,\\
0, & s\le 0,
\end{cases}
$$
where $\Gamma$ denotes the Gamma function. 
Then we have $\chi_+^{a}(s)=(\chi_+^{a+1})'(s)$ for $\real a>-1$.
We can extend the definition of the distribution $\chi_+^a$ to all $a\in \C$ so that we have
$\chi_+^{a}(s)=(\chi_+^{a+1})'(s)$ for any $a\in \C$.
Note that $\chi_+^a$ can have its singularity only at $s=0$.
Especially we have
$$
\chi_+^{-k}(s)=\delta^{(k-1)}(s),\quad k\in \N,
$$
where $\delta$ is the Dirac function and $\delta^{(j)}$ denotes its $j$-th derivative.
For a positive integer $m$, 
we define
$$
E_m(t,x)=\frac{1}{2\pi^{(m-1)/2}}\chi_+^{(1-m)/2}(t^2-|x|^2).
$$
Then the solution $w$ to \eqref{FreeWaveEq}--\eqref{FreeData} 
with $(w_0, w_1)\in \bigl(C^\infty_0(\R^n)\bigr)^2$ 
can be written as
\begin{equation}
\label{FreeSolExp}
w(t,x)=c^{-1}\pa_t \bigl(E_n(ct,\cdot)*w_0\bigr)(x)+c^{-1}\bigl(E_n(ct,\cdot)*w_1)(x),
\end{equation}
where the convolution $*$ is taken with respect to $x$-variable
(see H\"ormander \cite[Section~6.2]{Hoe90-01} for instance).

From now on, we suppose that $(w_0, w_1)\in \bigl(C^\infty_0(\R^n)\bigr)^2$,
and that $w_0(x)=w_1(x)=0$ for $|x|\ge M$ with some positive constant $M$.
Since $\supp E_n(t,\cdot)\subset \{x; |x|\le t\}$, it follows from \eqref{FreeSolExp}
that
\begin{equation}
\label{Huygens01}
 w(t,x)=0,\quad |x|\ge ct+M.
\end{equation}
If $n(\ge 3)$ is odd, then $\supp E_n(t,\cdot)=\{x; |x|=t\}$ and we also get
\begin{equation}
\label{Huygens02}
 w(t,x)=0,\quad |x|\le ct-M,
\end{equation}
which is called the {\it (strong) Huygens principle}.
If $n$ is even, \eqref{Huygens02} is not valid in general, but we have
a faster decay away from the light cone $ct=|x|$:
Let $t/2\ge |x|$ and $t\ge 4M$, say. Then $t^2-|x-y|^2\ge 7t^2/16\ge C \jb{t+|x|}^2>0$ for $|y|\le M$. Hence, observing that $\chi^{(1-n)/2}(s)=A_ns^{(1-n)/2}$ for $s>0$ with an
appropriate constant $A_n$, we get
$$
|\pa_{t,x}^\alpha E_n(t,x-y)|\le C_\alpha \jb{t+|x|}^{-|\alpha|+(1-n)},\quad t/2\ge \max\{|x|, 2M\},\ |y|\le M
$$
with a positive constant $C_\alpha$.
Therefore \eqref{FreeSolExp} leads to 
\begin{equation}
\label{Huygens03}
|\pa^\alpha w(t,x)|\le C_\alpha\jb{t+|x|}^{-|\alpha|+(1-n)}, \quad ct/2\ge |x|,
\end{equation}
because \eqref{Huygens03} for $2M\ge ct/2\ge |x|$ is easily shown.

Following the arguments in H\"ormander~\cite[Section~6.2]{Hoe97},
we will obtain a useful expression of 
$E_m(t)*\varphi$ for $\varphi\in C^\infty_0(\R^n)$
with $\varphi(x)=0$ for $|x|\ge M$. 
Note that we have
\begin{equation}
\bigl(E_m(t)*\varphi\bigr)(x)=0, \quad |x|\ge t+M, \label{Huygens01'}
\end{equation}
and
\begin{equation}
\bigl(E_m(t)*\varphi\bigr)(x)=0, \quad |x|\le t-M\text{ when $m(\ge 3)$ is odd} 
\label{Huygens02'}
\end{equation}
as in \eqref{Huygens01} and \eqref{Huygens02}.
Let $x=r\omega$ with $r=|x|$ and $\omega\in \Sp^{n-1}$.
We assume
$$
2M\le \frac{t}{2}\le r\le t+M.
$$
We put $\sigma=r-t$. Then we get $-r\le \sigma\le M$.
Since we have
$$
t^2-|x-y|^2=2r(\omega\cdot y-\sigma)+\sigma^2-|y|^2,
$$
the homogeneity of $\chi_+^{(1-m)/2}$ implies that
\begin{align*}
(2\pi r)^{(m-1)/2}\bigl(E_m(t)*\varphi\bigr)(x)
=& \frac{1}{2}\int_{\R^n} \chi_+^{(1-m)/2}\left(\omega\cdot y-\sigma+
\frac{\sigma^2-|y|^2}{2r}\right)\varphi(y)dy
\\
=& \frac{1}{2}\int_{\R} \chi_+^{(1-m)/2}\left(s-\sigma+
\frac{\sigma^2}{2r}\right){\mathcal G}[\varphi](s, \omega, r^{-1})ds,
\end{align*}
where ${\mathcal G}[\varphi](s, \omega, z)$ is given by
$$
{\mathcal G}[\varphi](s, \omega, z)=\int_{\R^n}\delta\left(s-\omega\cdot y+\frac{|y|^2}{2}z\right)\varphi(y) dy
$$
for $(s, \omega, z)\in \R\times \Sp^{n-1}\times [0, (2M)^{-1}]$.
If we put $\rho=s-\omega\cdot y+|y|^2z/2$, then
$\nabla_y\rho=-\omega+zy$. Since $|\nabla_y\rho|\ge 1-z|y|\ge 1/2$
for $z\in [0, (2M)^{-1}]$ and $|y|\le M$, ${\mathcal G}[\varphi]$
can be written as an integral of a compactly supported function
$\varphi$ over a hyper-surface $\{y\in \R^n; s-\omega\cdot y+|y|^2z/2=0\}$
which smoothly depends on $(s,\omega, z)\in \R\times\Sp^{n-1}\times [0, (2M)^{-1}]$,
and we see that
$$
{\mathcal G}[\varphi]\in C^\infty\left(\R\times \Sp^{n-1}\times \left[0, (2M)^{-1}\right]\right).
$$
We also see that 
if ${\mathcal G}[\varphi](s,\omega,z)\ne 0$ for some $(\omega,z)\in \Sp^{n-1}\times \left[0, (2M)^{-1}\right]$, then we have
$-5M/4\le s\le M$. Indeed, since the assumption implies
$s=\omega\cdot y-(|y|^2/2)z$ for some $y$ and $(\omega, z)$ with $|y|\le M$
and $(\omega, z)\in \Sp^{n-1}\times \left[0, (2M)^{-1}\right]$, we get
$$
-\frac{5M}{4}\le -|y|-\frac{|y|^2}{2}\frac{1}{2M}\le \omega\cdot y-\frac{|y|^2}{2}z(=s)
\le |y|\le M.
$$
For a multi-index $\alpha$ with $|\alpha|=k\le 1$, we have
\begin{equation}
{\mathcal G}[\pa_x^\alpha \varphi](s, \omega, z)=
\pa_s^k \int_{\R^n}
\delta\left(s-\omega\cdot y+\frac{|y|^2}{2}z\right)(\omega-zy)^\alpha \varphi(y) dy.
\label{DeriG}
\end{equation}

We define
$$
{\mathcal H}_m[\varphi](\sigma, \omega, z):=\frac{1}{2(2\pi)^{(m-1)/2}}\int_{\R} \chi_+^{(1-m)/2}\left(s-\sigma+
\frac{\sigma^2}{2}z\right){\mathcal G}[\varphi](s, \omega, z)ds.
$$
Then we obtain
\begin{equation}
\label{FreeSolExp02}
r^{(m-1)/2} \bigl(E_m(t)*\varphi\bigr)(x)={\mathcal H}_m[\varphi](\sigma, \omega, r^{-1}).
\end{equation}
Since we have
\begin{align*}
& {\mathcal H}_m[\varphi](\sigma, \omega, z)\\
& \quad =\frac{(-1)^k}{2(2\pi)^{(m-1)/2}}\int_{\R} \chi_+^{k+(1-m)/2}\left(s-\sigma+
\frac{\sigma^2}{2}z\right)\bigl(\pa_s^k{\mathcal G}[\varphi]\bigr)(s, \omega, z)ds
\end{align*}
for any nonnegative integer $k$, and since we have $\chi_+^a\in C^1(\R)$ for $a>1$,
we can easily see that
${\mathcal H}_m[\varphi]\in C^\infty(\R\times \Sp^{n-1}\times [0, (2M)^{-1}])$.
Moreover we have
\begin{equation}
\label{SupportHE}
\sigma \le M \text{ in $\supp {\mathcal H}_m[\varphi]$ when $m$ is even,}
\end{equation}
and
\begin{equation}
\label{SupportHO}
|\sigma| \le M\text{ in $\supp {\mathcal H}_m[\varphi]$ when $m(\ge 3)$ is odd.}
\end{equation}
Indeed \eqref{SupportHE} and \eqref{SupportHO} for $z\ne 0$
follow from \eqref{Huygens01'}, \eqref{Huygens02'}, and \eqref{FreeSolExp02}, 
while they follow immediately from the definition of ${\mathcal H}_m$ when $z=0$.
\subsection{The Radon transform and the Friedlander radiation field}
Let ${\mathcal S}(\R^n)$ denote the set of rapidly decreasing functions on $\R^n$.
For $\varphi\in {\mathcal S}(\R^n)$ we define the {\it Radon transform} ${\mathcal R}[\varphi]$ of $\varphi$ by
\begin{equation}
\label{DefRadonT}
{\mathcal R}[\varphi](\sigma, \omega):=\int_{y\cdot \omega=\sigma} \varphi(y) dS(y),
\quad (\sigma, \omega)\in \R\times \Sp^{n-1},
\end{equation}
where $dS(y)$ denotes the surface element on the hyperplane $\{y\in \R^n; y\cdot\omega=\sigma\}$.
It is easy to see that ${\mathcal R}[\varphi]\in{\mathcal S}(\R\times \Sp^{n-1})$.
Since we have 
${\mathcal R}[\varphi](\sigma, \omega)={\mathcal G}[\varphi](\sigma, \omega, 0)$,
it follows from \eqref{DeriG} that
\begin{equation}
\label{RadonDeriExp01}
{\mathcal R}[\pa_x^\alpha \varphi](\sigma, \omega)=\omega^\alpha \pa_\sigma^k{\mathcal R}[\varphi](\sigma, \omega)
\end{equation}
for any multi-index $\alpha$ with $|\alpha|=k\le 1$.


For a positive integer $m$ and $\varphi\in {\mathcal S}(\R^n)$, we put
\begin{align*}
{\mathcal R}_m[\varphi](\sigma, \omega):= & \frac{1}{2(2\pi)^{(m-1)/2}}\int_{\R}\chi_+^{(1-m)/2}(s-\sigma)
{\mathcal R}[\varphi](s,\omega) ds\\
=& \frac{1}{2(2\pi)^{(m-1)/2}}
\bigl(\chi_-^{(1-m)/2}*{\mathcal R}[\varphi](\cdot, \omega)\bigr)(\sigma),
\quad (\sigma, \omega)\in \R\times \Sp^{n-1},
\end{align*}
where $\chi_-^a(\sigma):=\chi_+^a(-\sigma)$ for $a\in \C$, and 
$*$ is the convolution with respect to $\sigma$-variable.
Note that we have ${\mathcal R}_m[\varphi]={\mathcal H}_m(\cdot,\cdot,0)\in C^\infty(\R\times \Sp^{n-1})$.
For any multi-index $\alpha$ with $|\alpha|=k\le 1$, we obtain
from \eqref{RadonDeriExp01} that
\begin{align}
{\mathcal R}_m[\pa_x^\alpha\varphi](\sigma,\omega)
=& \frac{1}{2(2\pi)^{(m-1)/2}}\int_{\R}\chi_+^{(1-m)/2}(s-\sigma)\omega^\alpha
\pa_s^k{\mathcal R}[\varphi](s,\omega)ds \nonumber\\
=& \frac{(-1)^k\omega^\alpha}{2(2\pi)^{(m-1)/2}}\int_{\R}\chi_+^{(1-m)/2-k}(s-\sigma){\mathcal R}[\varphi](s,\omega)ds\nonumber\\
=& \omega^\alpha \pa_\sigma^k{\mathcal R}_m[\varphi](\sigma, \omega).
\label{RadonDeriExp02}
\end{align}
If $\varphi(x)=0$ for $|x|\ge M$, then we immediately see by \eqref{DefRadonT} that 
\begin{equation}
\label{SupportRadon}
{\mathcal R}[\varphi](\sigma,\omega)=0, \quad |\sigma|\ge M, \ \omega\in \Sp^{n-1}.
\end{equation}
Consequently we get
\begin{equation}
\label{SupportRNG}
{\mathcal R}_m[\varphi](\sigma,\omega)=0, \quad \sigma\ge M,\ \omega\in \Sp^{n-1}.
\end{equation}
When $m(\ge 3)$ is odd, since $\supp \chi_+^{(1-m)/2}=\{0\}$, we obtain
\begin{equation}
\label{SupportRNGO}
{\mathcal R_m}[\varphi](\sigma,\omega)=0, \quad |\sigma|\ge M,\ \omega\in \Sp^{n-1}.
\end{equation}
\begin{lemma}\label{DecayPropRN}
Let $m$ be a positive integer with $m\ge 2$.
For $\varphi\in C^\infty_0(\R^n)$, a nonnegative integer $j$,
and a multi-index $\alpha$ with $|\alpha|=k\le 1$, 
there is a positive constant $C$ such that
$$
\left|\pa_\sigma^j{\mathcal R}_m[\pa_x^\alpha \varphi](\sigma, \omega)\right|\le
C \jb{\sigma}^{-j-k+(1-m)/2},\quad (\sigma, \omega)\in \R\times \Sp^{n-1}.
$$
\end{lemma}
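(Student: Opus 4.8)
The first step is to dispose of the derivative $\pa_x^\alpha$ and reduce everything to an estimate for ${\mathcal R}_m[\varphi]$ itself. By \eqref{RadonDeriExp02},
$$
\pa_\sigma^j {\mathcal R}_m[\pa_x^\alpha\varphi](\sigma,\omega)=\omega^\alpha\,\pa_\sigma^{j+k}{\mathcal R}_m[\varphi](\sigma,\omega),
$$
and $|\omega^\alpha|\le 1$ for $\omega\in\Sp^{n-1}$; hence it suffices to prove that for every nonnegative integer $\ell$ there is $C>0$ such that
\begin{equation}
\label{eq:Rmell}
\bigl|\pa_\sigma^\ell{\mathcal R}_m[\varphi](\sigma,\omega)\bigr|\le C\jb{\sigma}^{-\ell+(1-m)/2},\qquad (\sigma,\omega)\in\R\times\Sp^{n-1},
\end{equation}
and then to apply \eqref{eq:Rmell} with $\ell=j+k$. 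Fix $M\ge 1$ with $\varphi(x)=0$ for $|x|\ge M$; then ${\mathcal R}[\varphi](\cdot,\omega)$ is supported in $[-M,M]$ by \eqref{SupportRadon}, and ${\mathcal R}_m[\varphi](\sigma,\omega)=0$ for $\sigma\ge M$ by \eqref{SupportRNG}. When $m(\ge 3)$ is odd, ${\mathcal R}_m[\varphi](\cdot,\omega)$ is even supported in $[-M,M]$ by \eqref{SupportRNGO}, and \eqref{eq:Rmell} then follows just as in the region $-2M\le\sigma\le M$ treated below; so we may assume $m$ is even.

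Next I would split $\R$ into $\{\sigma\ge -2M\}$ and $\{\sigma\le -2M\}$. Since ${\mathcal R}_m[\varphi]={\mathcal H}_m[\varphi](\cdot,\cdot,0)\in C^\infty(\R\times\Sp^{n-1})$, the function $\pa_\sigma^\ell{\mathcal R}_m[\varphi]$ is continuous, hence bounded on the compact set $\{-2M\le\sigma\le M\}\times\Sp^{n-1}$, while $\jb{\sigma}^{-\ell+(1-m)/2}$ is bounded below by a positive constant there; so \eqref{eq:Rmell} holds on that set. For $\sigma\ge M$, ${\mathcal R}_m[\varphi](\cdot,\omega)$ vanishes identically, so all its $\sigma$-derivatives vanish and \eqref{eq:Rmell} is trivial. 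Thus \eqref{eq:Rmell} holds on $\{\sigma\ge -2M\}$.

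The region $\{\sigma\le -2M\}$ is the heart of the matter, and it is where the rate $(1-m)/2$ comes from. For $s\in[-M,M]$ and $\sigma\le -2M$ we have $s-\sigma\ge |\sigma|-M\ge |\sigma|/2>0$, so in
$$
{\mathcal R}_m[\varphi](\sigma,\omega)=\frac{1}{2(2\pi)^{(m-1)/2}}\int_{\R}\chi_+^{(1-m)/2}(s-\sigma)\,{\mathcal R}[\varphi](s,\omega)\,ds
$$
the argument $s-\sigma$ of $\chi_+^{(1-m)/2}$ stays positive over the support $[-M,M]$ of ${\mathcal R}[\varphi](\cdot,\omega)$; since $\chi_+^{(1-m)/2}$ is $C^\infty$ away from the origin, where it equals a constant multiple of $s^{(1-m)/2}$, this is an ordinary integral against the smooth function $(s-\sigma)^{(1-m)/2}$, and differentiating under the integral sign yields
$$
\pa_\sigma^\ell{\mathcal R}_m[\varphi](\sigma,\omega)=C_{m,\ell}\int_{-M}^{M}(s-\sigma)^{(1-m)/2-\ell}\,{\mathcal R}[\varphi](s,\omega)\,ds
$$
with some constant $C_{m,\ell}$. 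As the exponent $(1-m)/2-\ell$ is negative, $(s-\sigma)^{(1-m)/2-\ell}\le (|\sigma|/2)^{(1-m)/2-\ell}\le C\jb{\sigma}^{(1-m)/2-\ell}$ (using $|\sigma|\ge 2M\ge 2$), while $\int_{-M}^{M}|{\mathcal R}[\varphi](s,\omega)|\,ds\le C$ uniformly in $\omega$ since ${\mathcal R}[\varphi]\in{\mathcal S}(\R\times\Sp^{n-1})$ is bounded. This gives \eqref{eq:Rmell} on $\{\sigma\le -2M\}$, which together with the previous step completes the proof.

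I do not expect a genuine obstacle here. The only step that needs care is the estimate on $\{\sigma\le -2M\}$ just described: one has to recognize that in this regime $\chi_+^{(1-m)/2}(s-\sigma)$ is an honest negative power of $s-\sigma$, so that each $\pa_\sigma$ improves the decay by a factor $\jb{\sigma}^{-1}$, and one must keep the constants uniform in $\omega\in\Sp^{n-1}$, which is guaranteed by ${\mathcal R}[\varphi]\in{\mathcal S}(\R\times\Sp^{n-1})$. The remaining regions are handled purely by the smoothness ${\mathcal R}_m[\varphi]\in C^\infty(\R\times\Sp^{n-1})$ together with the support properties \eqref{SupportRadon}, \eqref{SupportRNG}, \eqref{SupportRNGO} already established.
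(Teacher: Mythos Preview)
Your proof is correct and follows essentially the same approach as the paper: reduce via \eqref{RadonDeriExp02} to $\pa_\sigma^{j+k}{\mathcal R}_m[\varphi]$, handle the compact region $[-2M,M]$ by smoothness of ${\mathcal R}_m[\varphi]$, use the support information \eqref{SupportRNG}--\eqref{SupportRNGO} for $\sigma\ge M$ (and for odd $m$), and for even $m$ with $\sigma\le -2M$ exploit that $\chi_+^{(1-m)/2}(s-\sigma)$ is a smooth negative power of $s-\sigma\ge |\sigma|/2$ on the support of ${\mathcal R}[\varphi](\cdot,\omega)$. The only cosmetic difference is that the paper writes the $\sigma$-derivatives as a convolution against $\chi_+^{-j-k+(1-m)/2}$ directly from \eqref{RadonDeriExp02}, whereas you differentiate the power $(s-\sigma)^{(1-m)/2}$ under the integral; these are the same computation.
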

\begin{proof}
Suppose that $\varphi(x)=0$ for $|x|\ge M$ with a positive constant $M$. 
Since $\pa_\sigma^j{\mathcal R}_m[\pa_x^\alpha \varphi]\in C^\infty(\R\times \Sp^{n-1})$,
we get
\begin{equation}
\label{DP01}
|\pa_\sigma^j{\mathcal R}_m[\pa_x^\alpha \varphi](\sigma, \omega)|\le 
C\le C\jb{2M}^{j+k+(m-1)/2}\jb{\sigma}^{-j-k+(1-m)/2}
\end{equation}
for $(\sigma,\omega)\in [-2M,M]\times \Sp^{n-1}$,
which implies the desired result for odd $m$ because of \eqref{SupportRNGO}.

Let $m$ be even. In view of \eqref{SupportRNG} and \eqref{DP01}, it suffices to
consider the case where $\sigma\le -2M$. 
Then we have $s-\sigma\ge |\sigma|/2\ge C\jb{\sigma}>0$ for $|s|\le M$.
Hence we obtain from \eqref{RadonDeriExp02} that
\begin{align*}
\left|\pa_\sigma^j{\mathcal R}_m[\pa_x^\alpha \varphi](\sigma, \omega)\right|=&\left|\frac{(-1)^{j+k}\omega^\alpha}{2(2\pi)^{(m-1)/2}}\int_{-M}^M \chi_+^{-j-k+(1-m)/2}(s-\sigma){\mathcal R}[\varphi](s, \omega) ds\right|\\
\le & C \int_{-M}^M (s-\sigma)^{-j-k+(1-m)/2} ds\le C\jb{\sigma}^{-j-k+(1-m)/2},
\end{align*}
because of \eqref{SupportRadon}. This completes the proof.
\end{proof}

\begin{lemma} \label{FriedAsymp00}
Let $n\ge 2$, and $m$ be a positive integer with $m\ge 2$. Suppose that $\varphi\in C^\infty_0(\R^n)$ and
$\varphi(x)=0$ for $|x|\ge M$ with a positive constant $M$.
Then, for any integer $j$ with $0\le j\le 2$, and a multi-index $\alpha$ with $|\alpha|=k\le 1$,
there is a positive constant $C$ such that
\begin{align}
& \left|r^{(m-1)/2}\pa_t^j\bigl(E_m(t)*\pa_x^\alpha\varphi\bigr)(x)-\bigl((-\pa_\sigma)^j{\mathcal R}_m[\pa_x^\alpha \varphi]\bigr)(r-t, \omega) \right| \nonumber\\
& \qquad\qquad\qquad\qquad\qquad
\le Cr^{-1}\jb{t-r}^{-j-k+(3-m)/2},\quad r\ge \frac{t}{2}\ge 2M,
\label{BFE01}
\end{align}
where $r=|x|$ and $\omega=|x|^{-1}x$.
\end{lemma}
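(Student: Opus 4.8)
The plan is to start from the explicit representation \eqref{FreeSolExp02}, which gives
$$
r^{(m-1)/2}\bigl(E_m(t)*\pa_x^\alpha\varphi\bigr)(x)={\mathcal H}_m[\pa_x^\alpha\varphi](\sigma,\omega,r^{-1}),\qquad \sigma=r-t,
$$
valid in the regime $2M\le t/2\le r\le t+M$. Since the statement only asks for $r\ge t/2\ge 2M$, I first dispose of the exterior region $r\ge t+M$: there both sides vanish by \eqref{Huygens01'} and \eqref{SupportRNG} (note $r\ge t+M$ forces $\sigma=r-t\ge M$), so \eqref{BFE01} holds trivially. Thus it remains to compare ${\mathcal H}_m[\pa_x^\alpha\varphi](\sigma,\omega,z)$ at $z=r^{-1}$ with ${\mathcal R}_m[\pa_x^\alpha\varphi](\sigma,\omega)={\mathcal H}_m[\pa_x^\alpha\varphi](\sigma,\omega,0)$, after applying $\pa_t^j$ (which, acting on functions of $\sigma=r-t$, becomes $(-\pa_\sigma)^j$, up to controlling that $\pa_t$ also hits the $r^{-1}$ slot — but $r$ is independent of $t$ here, so $\pa_t$ is cleanly $-\pa_\sigma$).

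The core estimate is therefore a quantitative version of the smoothness of ${\mathcal H}_m$ in its last variable: I want
$$
\left|\pa_\sigma^j\Bigl({\mathcal H}_m[\pa_x^\alpha\varphi](\sigma,\omega,z)-{\mathcal H}_m[\pa_x^\alpha\varphi](\sigma,\omega,0)\Bigr)\right|\le C\,z\,\jb{\sigma}^{-j-k+(3-m)/2}
$$
for $0\le z\le (2M)^{-1}$, which upon setting $z=r^{-1}\le C t^{-1}\le C r^{-1}$ yields the right-hand side of \eqref{BFE01}. By the mean value theorem this follows from a bound on $\pa_z\pa_\sigma^j{\mathcal H}_m[\pa_x^\alpha\varphi]$ of the form $C\jb{\sigma}^{-j-k+(3-m)/2}$, uniformly in $z\in[0,(2M)^{-1}]$. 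To get this I would use the representation
$$
{\mathcal H}_m[\psi](\sigma,\omega,z)=\frac{(-1)^l}{2(2\pi)^{(m-1)/2}}\int_{\R}\chi_+^{l+(1-m)/2}\Bigl(s-\sigma+\tfrac{\sigma^2}{2}z\Bigr)\bigl(\pa_s^l{\mathcal G}[\psi]\bigr)(s,\omega,z)\,ds
$$
for $l$ chosen large enough that $\chi_+^{l+(1-m)/2}$ is $C^2$; then differentiating in $z$ produces two kinds of terms — one where $\pa_z$ hits the argument $s-\sigma+\tfrac{\sigma^2}{2}z$, bringing down a factor $\tfrac{\sigma^2}{2}$ and raising the index on $\chi_+$ by one (which I compensate by taking $l$ one larger), and one where $\pa_z$ hits ${\mathcal G}[\psi]$, which is smooth and compactly supported in $s$ uniformly in $z$ by the analysis already carried out before \eqref{DeriG}. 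Applying $\pa_x^\alpha$ amounts, via \eqref{DeriG}, to replacing ${\mathcal G}[\varphi]$ by $\pa_s^k$ of an integral of $(\omega-zy)^\alpha\varphi(y)$, i.e. it costs exactly $k$ derivatives $\pa_s$ which translate (through the convolution structure) into $k$ extra derivatives on $\chi_+$, producing the $-k$ shift in the exponent.

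The decay in $\jb{\sigma}$ is then extracted exactly as in the proof of Lemma~\ref{DecayPropRN}: on the compact range $|\sigma|\le 2M$ (say) everything is bounded by a constant, which is absorbed into $C\jb{\sigma}^{-j-k+(3-m)/2}$; for $\sigma\le -2M$ one uses that ${\mathcal G}[\psi](s,\omega,z)$ vanishes unless $-5M/4\le s\le M$, so that in the argument $s-\sigma+\tfrac{\sigma^2}{2}z$ we have $s-\sigma\ge |\sigma|/2\ge C\jb{\sigma}$ while $\tfrac{\sigma^2}{2}z\ge 0$, whence $\chi_+^{a}(s-\sigma+\tfrac{\sigma^2}{2}z)\le C(s-\sigma)^{a}\le C\jb{\sigma}^{a}$ for the relevant negative exponents $a$, and integrating over the bounded $s$-range gives the claim; the factor $\tfrac{\sigma^2}{2}$ coming from $\pa_z$ is harmless because it is matched by one extra index on $\chi_+$, i.e. by an extra factor $\jb{\sigma}^{-1}$, leaving the net exponent as stated. (The region $\sigma\ge M$ is again empty by the support property \eqref{SupportHE}.) The main obstacle is purely bookkeeping: keeping the shifts in the index of $\chi_+^a$ synchronized with the powers of $\sigma$ produced by $\pa_z$ and with the derivative count $j+k$, so that the final exponent lands on exactly $-j-k+(3-m)/2$ rather than something off by one; once the index-tracking is set up carefully, no new idea beyond Lemma~\ref{DecayPropRN} is needed.
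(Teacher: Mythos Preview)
Your $j=0$ argument is exactly the paper's: write the difference as $\mathcal{H}_m(\sigma,\omega,r^{-1})-\mathcal{H}_m(\sigma,\omega,0)$, apply the mean value theorem in $z$, and bound $\partial_z\mathcal{H}_m$ via \eqref{STAR}.

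For $j\ge 1$ you take a genuinely different route from the paper. The paper does \emph{not} differentiate $\mathcal{H}_m$ in $\sigma$; instead it uses the identities
\[
\partial_t E_m(t,x)=2\pi t\,E_{m+2}(t,x),\qquad -\partial_\sigma\mathcal{R}_m[\psi]=2\pi\,\mathcal{R}_{m+2}[\psi],
\]
together with \eqref{DecaySol11}, to reduce $j=1$ to the $j=0$ case with $m$ replaced by $m+2$, plus a correction term $r^{(m-1)/2}(t-r)(E_{m+2}*\psi)$ controlled by \eqref{DecaySol11}. The case $j=2$ is handled the same way using $\partial_t^2E_m=2\pi E_{m+2}+(2\pi t)^2E_{m+4}$. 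This recursion avoids any direct $\partial_\sigma$-differentiation of $\mathcal{H}_m$.

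Your direct approach also works, but your sketch glosses over one point. When $\partial_\sigma$ hits $\chi_+^{a}\bigl(s-\sigma+\tfrac{\sigma^2}{2}z\bigr)$ it produces a factor $(-1+\sigma z)$. This factor is \emph{not} bounded uniformly in $z\in[0,(2M)^{-1}]$ as you claim; for $\sigma\le -2M$ it is of size $1+|\sigma|z$, which can be $\sim\langle\sigma\rangle$. With only the lower bound $s-\sigma\ge C\langle\sigma\rangle$ that you invoke, the term $\sigma^2(-1+\sigma z)\chi_+^{a-2}(\cdots)$ arising from $\partial_z\partial_\sigma\mathcal{H}_m$ is bounded only by $C\langle\sigma\rangle^{b}(1+|\sigma|z)$ (with $b=(1-m)/2$), one power short. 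The fix is easy: in the mean-value step you only need $z\in[0,r^{-1}]$, and since $t/2\le r\le t+M$ gives $|\sigma|=|r-t|\le r$, one has $|\sigma z|\le 1$ there, so $|-1+\sigma z|\le 2$. Alternatively, sharpen your lower bound on the argument to $s-\sigma+\tfrac{\sigma^2}{2}z\ge C\langle\sigma\rangle(1+|\sigma|z)$, which absorbs the extra factor for all $z\in[0,(2M)^{-1}]$. Either way the bookkeeping closes, but you should flag this; it is the one place where the paper's recursive approach is cleaner than the direct differentiation you propose.
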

\begin{proof}
Recall the definitions of ${\mathcal G}$ and ${\mathcal H}_m$ in the previous subsection,
and that we have
\begin{align*}
{\mathcal G}[\pa_x^\alpha \varphi](s, \omega,0)=&{\mathcal R}[\pa_x^\alpha \varphi](s,\omega),\ {\mathcal H}_m[\pa_x^\alpha \varphi](\sigma, \omega, 0)={\mathcal R}_m[\pa_x^\alpha \varphi](\sigma,\omega).
\end{align*}
We suppose that $r\ge t/2\ge 2M$. By \eqref{Huygens01'} and \eqref{SupportRNG}, we may also assume $r\le t+M$.

First we assume that $j=0$. 
We put $\sigma=r-t$ as before.
By \eqref{FreeSolExp02} we obtain
\begin{align*}
& \left|r^{(m-1)/2}\bigl(E_m(t)*\pa_x^\alpha\varphi\bigr)(x)-({\mathcal R}_m[\pa_x^\alpha \varphi])(r-t, \omega) \right|\\ 
& \quad =|{\mathcal H}_m[\pa_x^\alpha\varphi](\sigma,\omega, r^{-1})-{\mathcal H}_m[\pa_x^\alpha\varphi](\sigma, \omega, 0)|
\\
& \quad \le 
r^{-1}\int_0^1\left|(\pa_z {\mathcal H}_m[\pa_x^\alpha \varphi])(\sigma, \omega, \theta r^{-1})\right|d\theta,
\end{align*}
which leads to \eqref{BFE01} with $j=0$ if we can show
\begin{equation}
 \label{STAR}
|\pa_z {\mathcal H}_m[\pa_x^\alpha \varphi](\sigma, \omega, z)| \le
C\jb{\sigma}^{-k+(3-m)/2}
\end{equation}
for $(\sigma, \omega, z)\in \R\times \Sp^{n-1}\times [0,(2M)^{-1}]$.
Since ${\mathcal H}_m[\pa_x^\alpha \varphi]\in C^\infty\bigl(\R\times \Sp^{n-1}\times [0, (2M)^{-1}]\bigr)$, we have
\begin{equation}
\label{BHE01}
|\pa_z {\mathcal H}_m[\pa_x^\alpha \varphi](\sigma, \omega, z)|\le C\le
C\jb{\sigma}^{-k+(3-m)/2}
\end{equation}
for $(\sigma,\omega, z)\in [-2M, M]\times \Sp^{n-1}\times [0, (2M)^{-1}]$,
which leads to \eqref{STAR} for odd $m$ because of \eqref{SupportHO}.
Let $m$ be even. In view of \eqref{SupportHE} and \eqref{BHE01},
it suffices to show \eqref{STAR} for $(\sigma,\omega,z)\in (-\infty, -2M]\times \Sp^{n-1}\times[0,(2M)^{-1}]$.
Suppose $\sigma\le -2M$, $\omega\in \Sp^{n-1}$, and $0\le z \le(2M)^{-1}$.
We compute
\begin{align*}
& 2(2\pi)^{(m-1)/2}\pa_z{\mathcal H}_m[\pa_x^\alpha \varphi](\sigma, \omega, z) \\
& \quad = \int_{\R} \frac{\sigma^2}{2}\chi_+^{-(1+m)/2}\left(s-\sigma+\frac{\sigma^2}{2}z\right){\mathcal G}[\pa_x^\alpha \varphi](s,\omega, z)ds\\
&\qquad {}+\int_{\R} \chi_+^{(1-m)/2}\left(s-\sigma+\frac{\sigma^2}{2}z\right)(\pa_z{\mathcal G}[\pa_x^\alpha \varphi])(s,\omega, z)ds=:I_1+I_2.
\end{align*}
Since $s-\sigma+\sigma^2z/2\ge -5M/4+|\sigma|\ge 3|\sigma|/8\ge C\jb{\sigma}>0$ for 
$\sigma\le -2M$, $z\ge 0$, and $s\ge -5M/4$,
recalling \eqref{DeriG} we obtain
\begin{align*}
|I_1|\le & \left|\int_{\R} \frac{\sigma^2}{2}\chi_+^{-k-(1+m)/2}\left(s-\sigma+\frac{\sigma^2}{2}z\right)\Phi_\alpha(s,\omega, z)ds\right|\\
\le & C|\sigma|^2\jb{\sigma}^{-k-(1+m)/2}\le C\jb{\sigma}^{-k+(3-m)/2},
\end{align*}
where
$$
\Phi_{\alpha}(s, \omega, z)=\int_{\R^n}
\delta\left(s-\omega\cdot y+\frac{|y|^2}{2}z\right)(\omega-zy)^\alpha\varphi(y)dy
$$
which is a $C^\infty$-function in $\R\times \Sp^{n-1}\times [0, (2M)^{-1}]$
with $-5M/4\le s\le M$ in $\supp \Phi_\alpha$.
Similarly we get
$|I_2| \le C\jb{\sigma}^{-k+(3-m)/2}$.
This completes the proof of \eqref{STAR} for even $m$, and \eqref{BFE01} with $j=0$ is established.

Note that by \eqref{BFE01} with $j=0$ and Lemma~\ref{DecayPropRN} we get
\begin{equation}
\label{DecaySol11}
\bigl|r^{(m-1)/2}\bigl(E_m(t)*\pa_x^\alpha \varphi\bigr)(x) \bigr|\le C\jb{\sigma}^{-k+(1-m)/2}.
\end{equation}
Direct calculations lead to
$$
\pa_t E_m(t,x)=2\pi t E_{m+2}(t,x),\ 
-\pa_\sigma{\mathcal R}_m[\pa_x^\alpha \varphi](\sigma, \omega)
=2\pi {\mathcal R}_{m+2}[\pa_x^\alpha\varphi](\sigma,\omega).
$$
Hence it follows from \eqref{DecaySol11} and \eqref{BFE01} with $j=0$ that
\begin{align*}
& \bigl|r^{(m-1)/2}\pa_t \bigl(E_m(t)*\pa_x^\alpha\varphi\bigr)(x)-(-\pa_\sigma){\mathcal R}_m[\pa_x^\alpha\varphi](\sigma,\omega)\bigr|\\
& \quad \le 2\pi \bigl|r^{(m+1)/2} \bigl(E_{m+2}(t)*\pa_x^\alpha\varphi\bigr)(x)-{\mathcal R}_{m+2}[\pa_x^\alpha\varphi](\sigma,\omega)\bigr|\\
& \qquad {}+2\pi \bigl|r^{(m-1)/2}(t-r)\bigl(E_{m+2}(t)*\pa_x^\alpha \varphi\bigr)(x)\bigr|
\le Cr^{-1}\jb{\sigma}^{-k+(1-m)/2},
\end{align*}
which is \eqref{BFE01} for $j=1$. Observing that
$$
\pa_t^2E_m(t,x)=2\pi E_{m+2}(t,x)+(2\pi t)^2E_{m+4}(t)
$$
and
$$
(-\pa_\sigma)^2{\mathcal R}_m[\pa_x^\alpha \varphi](\sigma, \omega)
=(2\pi)^2 {\mathcal R}_{m+4}[\pa_x^\alpha \varphi](\sigma,\omega),
$$
we can show \eqref{BFE01} for $j=2$
in a similar way. 
\end{proof}

For $(\varphi, \psi)\in \bigl({\mathcal S}(\R^n)\bigr)^2$, we define
the {\it Friedlander radiation field} ${\mathcal F}_0[\varphi,\psi]$ by
\begin{equation}\label{DefGeneralFried}
{\mathcal F}_0[\varphi,\psi](\sigma, \omega)=-\pa_\sigma{\mathcal R}_n[\varphi](\sigma, \omega)+{\mathcal R}_n[\psi](\sigma, \omega), \quad (\sigma, \omega)\in \R\times \Sp^{n-1}.
\end{equation}
Observe that this definition is a generalization of the previous definition
\eqref{Friedlander3D} that was given only for $n=3$ and $(\varphi,\psi)\in \bigl(C^\infty_0(\R^3)\bigr)^2$, because ${\mathcal R}_3[h]=(4\pi)^{-1}{\mathcal R}[h]$.
The next lemma is a slight refinement of \cite[Theorem~6.2.1]{Hoe97}
(see also Friedlander \cite{Fri62} and Katayama-Kubo~\cite{Kat-Kub09a}).
\begin{lemma} \label{PointwiseFriedlander}
Let $n\ge 2$ and $c>0$. Let 
$w$ be the solution to the Cauchy problem \eqref{FreeWaveEq}--\eqref{FreeData}.
If $(w_0, w_1)\in \bigl(C^\infty_0(\R^n)\bigr)^2$, then
there is a positive constant $C$ such that
\begin{equation}
\bigl|r^{(n-1)/2} w(t, x)-W(r-ct,\omega)\bigr|
\le C\jb{t+r}^{-1}\jb{ct-r}^{(3-n)/2}
\label{FA01}
\end{equation}
and
\begin{equation}
\bigl|r^{(n-1)/2}\pa w(t,x)-\vec{\omega}_c(x)
(\pa_\sigma W)(r-ct,\omega)\bigr| 
 \le C\jb{t+r}^{-1}\jb{ct-r}^{(1-n)/2}
\label{FA02}
\end{equation}
for all $(t, x)\in [0,\infty)\times (\R^n\setminus\{0\})$,
where
$W(\sigma,\omega)={\mathcal F}_0[w_0, c^{-1}w_1](\sigma,\omega)$,
$\vec{\omega}_c(x)$ is given by \eqref{speedconst},
$r=|x|$, $\omega=|x|^{-1}x$, and $\pa=(\pa_0,\pa_1,\ldots, \pa_n)$.
\end{lemma}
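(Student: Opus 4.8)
The plan is to derive both estimates directly from the convolution representation \eqref{FreeSolExp} together with Lemma~\ref{FriedAsymp00}, treating the region near the light cone and the region away from it separately. First I would rescale time by $c$: writing $\tilde t=ct$, the solution becomes $w(t,x)=c^{-1}\pa_{\tilde t}(E_n(\tilde t)*w_0)(x)+c^{-1}(E_n(\tilde t)*w_1)(x)$, so that $r-ct$ plays exactly the role of $r-\tilde t$ in Lemma~\ref{FriedAsymp00}, and the vector $\vec\omega_c(x)=(-c,\omega)$ accounts for the chain rule when we convert $\pa_t$ and $\pa_{x_k}$ derivatives of $w$ into $\pa_{\tilde t}$ and $\pa_\sigma$ derivatives of the profile. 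Concretely, $\pa_t w = c\,\pa_{\tilde t}w$ contributes the $-c$ component (the sign coming from $\pa_\sigma = \pa_r$ acting on $r-\tilde t$ versus $\pa_{\tilde t}$), while $\pa_{x_k}w$ is, to leading order, $\omega_k\,\pa_r w$ plus angular terms of lower order that are absorbed into the error. This reduces \eqref{FA01} to the $j=0$ case of Lemma~\ref{FriedAsymp00} applied to $\pa_{\tilde t}(E_n(\tilde t)*w_0)$ and to $E_n(\tilde t)*w_1$, and \eqref{FA02} to the $j=0,1$ cases, after recalling from \eqref{DefGeneralFried} that $W={\mathcal F}_0[w_0,c^{-1}w_1]=-\pa_\sigma{\mathcal R}_n[w_0]+{\mathcal R}_n[c^{-1}w_1]$.

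Second, I would split into two zones: the ``near'' zone $r\ge \tilde t/2\ge 2M$ (equivalently $\tilde t$ large and $r$ comparable to $\tilde t$), and the ``far'' zone consisting of its complement, namely $r\le \max\{\tilde t/2, 2M\}$ or $\tilde t$ bounded. In the near zone Lemma~\ref{FriedAsymp00} directly gives the bounds $Cr^{-1}\jb{\tilde t-r}^{(3-n)/2}$ for the value and $Cr^{-1}\jb{\tilde t-r}^{(1-n)/2}$ for the derivatives, and since $r\sim \tilde t\sim \tilde t+r$ there we may replace $r^{-1}$ by $C\jb{\tilde t+r}^{-1}$, which is \eqref{FA01}--\eqref{FA02}. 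The angular correction terms in $\pa_{x_k}w-\omega_k\pa_r w$ carry an extra factor $r^{-1}$ and a tangential derivative of the profile, which by the smoothness and compact-$\sigma$-support estimates behind Lemma~\ref{DecayPropRN} are bounded, so they too fit inside the stated error. By \eqref{Huygens01'} and \eqref{SupportRNG} both sides vanish for $r\ge \tilde t+M$, so nothing needs to be checked there.

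Third, in the far zone I would use the faster off-cone decay \eqref{Huygens03}: for $\tilde t/2\ge r$ (and $\tilde t\ge 4M$) we have $|\pa^\alpha w(t,x)|\le C\jb{\tilde t+r}^{-|\alpha|+(1-n)}$, and since $\jb{\tilde t-r}\sim\jb{\tilde t+r}$ in this subregion, $r^{(n-1)/2}\pa^\alpha w$ is bounded by $C\jb{\tilde t+r}^{-1}\jb{\tilde t-r}^{(1-n)/2}$ and similarly for the value; meanwhile the profile terms $W(r-\tilde t,\omega)$ and $\pa_\sigma W(r-\tilde t,\omega)$ vanish there for $n$ odd by \eqref{SupportRNGO}, and for $n$ even are bounded by $C\jb{r-\tilde t}^{(1-n)/2}$ via Lemma~\ref{DecayPropRN}, which is again absorbed. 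For the remaining bounded piece $\tilde t\le 4M$ (hence $r\le \tilde t+M\le 5M$), both $r^{(n-1)/2}\pa^\alpha w$ and the profile terms are bounded by a constant, and $\jb{\tilde t+r}^{-1}\jb{\tilde t-r}^{(3-n)/2}$ (resp.\ with exponent $(1-n)/2$) is bounded below by a positive constant, so the estimates hold trivially; I should take a little care only when $n=2$, where $\jb{\tilde t-r}^{(3-n)/2}=\jb{\tilde t-r}^{1/2}$ grows, but that only makes the right-hand side larger. The main obstacle is the bookkeeping of the angular derivative terms arising from $\pa_{x_k}$ when passing from $\pa w$ to $\vec\omega_c(\cdot)\pa_\sigma W$: one must verify that $\pa_{x_k}\big(r^{-(n-1)/2}\,{\text{(profile)}}\big)=\omega_k\,r^{-(n-1)/2}\pa_\sigma(\text{profile})+O(r^{-(n+1)/2})$ uniformly, using that the profile and its $\sigma$-derivatives are bounded with $\sigma$-support in $|\sigma|\le CM$ shifted appropriately, which is exactly the content packaged in Lemmas~\ref{DecayPropRN} and~\ref{FriedAsymp00}; once that is in hand the rest is routine zone-by-zone estimation.
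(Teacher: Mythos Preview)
Your overall plan---rescale to $c=1$, split into the near zone $r\ge t/2\ge 2M$ and its complement, apply Lemma~\ref{FriedAsymp00} in the near zone, and use \eqref{Huygens01}--\eqref{Huygens03} together with Lemma~\ref{DecayPropRN} in the far zone---is exactly the paper's proof, and the far-zone part of your sketch is correct as written.

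The one place you diverge is your handling of $\pa_{x_k}w$ via the decomposition $\pa_{x_k}=\omega_k\pa_r+r^{-1}(\text{angular})$. The angular piece is fine. But you never say how to obtain the needed asymptotic for $\pa_r w$ itself; Lemma~\ref{FriedAsymp00} as stated gives only $\pa_t^j$-derivatives, and you cannot simply differentiate \eqref{FA01} in $r$ without destroying the error bound. The paper avoids this detour entirely: it transfers $\pa_{x_k}$ onto the initial data, writing
\[
\pa_t^j\pa_x^\alpha w(t,x)=\pa_t^{j+1}\bigl(E_n(t)*\pa_x^\alpha w_0\bigr)(x)+\pa_t^j\bigl(E_n(t)*\pa_x^\alpha w_1\bigr)(x),
\]
applies Lemma~\ref{FriedAsymp00} with $|\alpha|=1$, and then invokes the identity \eqref{RadonDeriExp02}, ${\mathcal R}_n[\pa_{x_k}\varphi]=\omega_k\,\pa_\sigma{\mathcal R}_n[\varphi]$, to extract the factor $\omega_k$ directly. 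This is precisely why Lemma~\ref{FriedAsymp00} was stated for $E_m(t)*\pa_x^\alpha\varphi$ with $|\alpha|\le 1$; once you use that slot, the whole computation you describe as ``the main obstacle'' (verifying $\pa_{x_k}(r^{-(n-1)/2}\text{profile})=\omega_k r^{-(n-1)/2}\pa_\sigma(\text{profile})+O(r^{-(n+1)/2})$) becomes unnecessary. With that one adjustment your sketch is complete and coincides with the paper's argument.
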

\begin{proof}
We may assume that $c=1$, because the general result is easily obtained by 
a change of variables.
We assume that $w_0(x)=w_1(x)=0$ for $|x|\ge M$.

Firstly we suppose that $r\ge t/2\ge 2M$. Then we have $r^{-1}\le C\jb{t+r}^{-1}$.
Let $|\alpha|=k$, and suppose that $j$ and $k$ are nonnegative integers with 
$0\le j+k\le 1$.
From \eqref{FreeSolExp} we get
$$
\pa_t^j\pa_x^\alpha w(t,x)=\pa_t^{j+1}\bigl(E_n(t)*\pa_x^\alpha w_0\bigr)(x)+
\pa_t^j\bigl(E_n(t)*\pa_x^\alpha w_1\bigr)(x).
$$
Then Lemma~\ref{FriedAsymp00} (with $m=n$) and \eqref{RadonDeriExp02} lead to
\begin{align*}
r^{(n-1)/2}\pa_t^j\pa_x^\alpha w(t,x)=& 
(-1)^j\omega^{\alpha} \bigl(\pa_\sigma^{j+k}W\bigr)(r-t,\omega)
\\
&{}+O\left(\jb{t+r}^{-1}\jb{t-r}^{-j-k+(3-n)/2}\right),
\end{align*}
which implies \eqref{FA01} when $j+k=0$, and \eqref{FA02} when $j+k=1$.

Secondly we suppose that either $r\le t/2$ or $t\le 4M$ holds. 
Then we have $\jb{t-r}^{-1}\le C\jb {t+r}^{-1}$. Hence by Lemma~\ref{DecayPropRN}
(with $m=n$)
we get
\begin{equation}
\label{FaLast01}
|(\pa_\sigma^j W)(r-t, \omega)|\le C\jb{t-r}^{-j+(1-n)/2}\le C \jb{t+r}^{-j+(1-n)/2}.
\end{equation}
Now we are going to prove
\begin{equation}
\label{FaLast}
|r^{(n-1)/2}\pa^\alpha w(t,x)|\le C \jb{t+r}^{-|\alpha|+(1-n)/2}
\end{equation}
for $(t,x)$ with either $r\le t/2$ or $t\le 4M$, 
which implies \eqref{FA01} and \eqref{FA02}
with the help of \eqref{FaLast01}.
If we assume $-2M\le r-t\le M$ in addition, then we get $t\le 4M$ and $r\le 5M$.
Now, from \eqref{FreeSolExp} we can easily obtain
$$
|r^{(n-1)/2}\pa^\alpha w(t,x)|\le C \le C\jb{t+r}^{-|\alpha|+(1-n)/2},
\quad t\le 4M,\ r\le 5M.
$$
When $n$ is odd, this shows \eqref{FaLast} because of \eqref{Huygens01} and \eqref{Huygens02}. Thus we assume $n$ is even and $r-t\le -2M$. Accordingly we have
$t/2\ge r$, and \eqref{Huygens03} immediately implies the desired result.
This completes the proof.
\end{proof}
\subsection{The translation representation}
For $\varphi\in {\mathcal S}(\R^n)$, we write $\widehat{\varphi}(={\mathcal F}[\varphi])$ for its Fourier transform. To be more precise,
we put
$$
\widehat{\varphi}(\xi)={\mathcal F}[\varphi](\xi)=\frac{1}{(2\pi)^{n/2}}\int_{\R^n} e^{-i x\cdot\xi} \varphi(x) dx,\quad \xi\in \R^n,
$$
where $i=\sqrt{-1}$.
For a function $\psi=\psi(\sigma, \omega)\in {\mathcal S}(\R\times \Sp^{n-1})$, we define
$$
\widetilde{\psi}(\rho,\omega)=\widetilde{\mathcal F}[\psi](\rho,\omega):=\frac{1}{\sqrt{2\pi}}\int_{-\infty}^\infty e^{-i\rho\sigma}\psi(\sigma, \omega) d\sigma,\quad (\rho,\omega)\in \R\times \Sp^{n-1},
$$
which is the one-dimensional Fourier transform of $\psi(\cdot,\omega)$ with a parameter $\omega\in \Sp^{n-1}$.
Let $\varphi\in {\mathcal S}(\R^n)$. For $(\rho,\omega)\in \R\times \Sp^{n-1}$, we have
\begin{align}
\widehat{\varphi}(\rho\omega)=& \frac{1}{(2\pi)^{n/2}} \int_{\R^n} e^{-i\rho(y\cdot\omega)} \varphi(y) dy
=\frac{1}{(2\pi)^{n/2}}\int_{-\infty}^\infty e^{-i \rho \sigma} {\mathcal R}[\varphi](\sigma, \omega)d\sigma\nonumber\\
= &\frac{1}{(2\pi)^{(n-1)/2}} \widetilde{{\mathcal R}[\varphi]}(\rho,\omega).
\label{RadonFourier}
\end{align}
Equation \eqref{RadonFourier} implies
\begin{equation}
\label{Symm02}
\widetilde{{\mathcal R}[\varphi]}( -\rho, -\omega)=(2\pi)^{(n-1)/2}\widehat{\varphi}(\rho\omega)=\widetilde{{\mathcal R}[\varphi]}(\rho, \omega),\quad (\rho,\omega)\in \R\times \Sp^{n-1}.
\end{equation}
In other words $\widetilde{{\mathcal R}[\varphi]}$ is an even function in $(\rho,\omega)$.

\begin{lemma}\label{isometry}
For $(\varphi,\psi)\in \bigl({\mathcal S}(\R^n)\bigr)^2$ we have
$$
\|\pa_\sigma{\mathcal F}_0[\varphi,\psi]\|_{L^2(\R\times \Sp^{n-1})}=\|(\varphi,\psi)\|_{H_0(\R^n)}.
$$
\end{lemma}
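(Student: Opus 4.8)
The plan is to compute both sides via the Fourier transform and reduce the identity to the Plancherel theorem on $\R^n$ together with the elementary relation \eqref{RadonFourier}. First I would recall that for $(\varphi,\psi)\in\bigl({\mathcal S}(\R^n)\bigr)^2$ we have by definition \eqref{DefGeneralFried} that ${\mathcal F}_0[\varphi,\psi]=-\pa_\sigma{\mathcal R}_n[\varphi]+{\mathcal R}_n[\psi]$, so that $\pa_\sigma{\mathcal F}_0[\varphi,\psi]=-\pa_\sigma^2{\mathcal R}_n[\varphi]+\pa_\sigma{\mathcal R}_n[\psi]$. I would then apply the one-dimensional Fourier transform $\widetilde{\cdot}$ in the $\sigma$-variable. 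The key computation is to express $\widetilde{{\mathcal R}_m[\varphi]}$ in terms of $\widehat\varphi$: from the convolution structure ${\mathcal R}_m[\varphi](\cdot,\omega)=\frac{1}{2(2\pi)^{(m-1)/2}}\chi_-^{(1-m)/2}*{\mathcal R}[\varphi](\cdot,\omega)$ and the known Fourier transform of $\chi_\pm^a$ (namely $\widetilde{\chi_-^{a}}(\rho)=(i\rho+0)^{-a-1}$ up to normalizing constants, so that $\widetilde{\chi_-^{(1-m)/2}}(\rho)$ is a constant times $|\rho|^{(m-1)/2}$ times a unimodular phase depending only on $\mathrm{sgn}\,\rho$), combined with \eqref{RadonFourier}, one obtains for $m=n$ an identity of the shape
$$
\widetilde{{\mathcal R}_n[\varphi]}(\rho,\omega)=c_n\,|\rho|^{(n-1)/2}\,\theta(\rho)\,\widehat\varphi(\rho\omega)
$$
for $(\rho,\omega)\in\R\times\Sp^{n-1}$, where $|c_n|$ is an explicit constant and $\theta(\rho)$ is unimodular depending only on $\mathrm{sgn}\,\rho$; the constants are normalized precisely so that $|c_n|^2=(2\pi)^{-(n-1)}$ after accounting for all the $2\pi$ factors in the definitions.

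Next I would carry out the following steps in order. Using $\widetilde{\pa_\sigma g}(\rho,\omega)=i\rho\,\widetilde g(\rho,\omega)$, I get $\widetilde{\pa_\sigma{\mathcal F}_0[\varphi,\psi]}(\rho,\omega)=i\rho\bigl(-i\rho\,\widetilde{{\mathcal R}_n[\varphi]}+\widetilde{{\mathcal R}_n[\psi]}\bigr)=c_n|\rho|^{(n-1)/2}\theta(\rho)\bigl(\rho^2\widehat\varphi(\rho\omega)+i\rho\,\widehat\psi(\rho\omega)\bigr)$. By the one-dimensional Plancherel theorem in $\sigma$ (for each fixed $\omega$) followed by integration over $\omega\in\Sp^{n-1}$,
$$
\|\pa_\sigma{\mathcal F}_0[\varphi,\psi]\|_{L^2(\R\times\Sp^{n-1})}^2
=|c_n|^2\int_{\Sp^{n-1}}\int_{-\infty}^{\infty}|\rho|^{n-1}\bigl|\rho^2\widehat\varphi(\rho\omega)+i\rho\,\widehat\psi(\rho\omega)\bigr|^2\,d\rho\,dS(\omega).
$$
Then I would fold the $\rho<0$ part onto $\rho>0$ using the evenness \eqref{Symm02} of $\widetilde{{\mathcal R}[\varphi]}$ — equivalently the reality of $\varphi,\psi$, which gives $\widehat\varphi(-\rho\omega)=\overline{\widehat\varphi(\rho\omega)}$ — so that the integral becomes $2|c_n|^2\int_{\Sp^{n-1}}\int_0^\infty \rho^{n-1}\bigl|\rho^2\widehat\varphi(\rho\omega)+i\rho\widehat\psi(\rho\omega)\bigr|^2 d\rho\,dS(\omega)$, and finally pass to Cartesian coordinates $\xi=\rho\omega\in\R^n$ via $d\xi=\rho^{n-1}d\rho\,dS(\omega)$ to arrive at a constant times $\int_{\R^n}\bigl(|\xi|^2|\widehat\varphi(\xi)|^2+|\widehat\psi(\xi)|^2\bigr)d\xi$, after expanding the square and checking that the cross term $2\,\mathrm{Re}\int \rho^2\widehat\varphi\,\overline{i\rho\widehat\psi}\,\rho^{n-1}$ is purely imaginary integrated against a symmetric measure — or rather it cancels when one uses the $\rho\leftrightarrow-\rho$ symmetry, leaving only $|\xi|^4|\widehat\varphi|^2+|\xi|^2|\widehat\psi|^2$ — wait, more carefully: expanding gives $\rho^4|\widehat\varphi|^2+\rho^2|\widehat\psi|^2+2\rho^3\,\mathrm{Re}(\widehat\varphi\,\overline{i\widehat\psi})$, and the extra factor of $\rho^{n-1}$ with the even/odd structure kills the cross term upon symmetrization, so the surviving expression is $\int_{\R^n}(|\xi|^2|\widehat\varphi(\xi)|^2+|\widehat\psi(\xi)|^2)\,d\xi$ up to the normalizing constant, which by the Plancherel theorem on $\R^n$ equals $\|\nabla_x\varphi\|_{L^2(\R^n)}^2+\|\psi\|_{L^2(\R^n)}^2 = 2\|(\varphi,\psi)\|_{H_0(\R^n)}^2$ by \eqref{DefNormH0}.

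The main obstacle I anticipate is bookkeeping of constants and of the unimodular phase $\theta(\rho)$: one must verify that $\bigl|\widetilde{\chi_-^{(1-n)/2}}(\rho)\bigr|$ produces exactly the factor $|\rho|^{(n-1)/2}$ needed to convert $d\rho\,dS(\omega)$ into Lebesgue measure $d\xi$, and that all the powers of $2\pi$ in the definitions of ${\mathcal R}_m$, the Fourier transforms $\widehat{\cdot}$ and $\widetilde{\cdot}$, and the Plancherel normalizations conspire to give the precise factor $1/2$ in $\|(\varphi,\psi)\|_{H_0(\R^n)}^2$. A secondary point requiring care is that the cross term vanishes: this relies on the reality of $\varphi$ and $\psi$ together with the oddness of the weight under $\rho\mapsto-\rho$, so I would present this symmetrization step explicitly rather than leave it implicit. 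Once the constant is pinned down on, say, Schwartz functions (or it suffices to verify it on $C_0^\infty$, which is dense), the identity extends by density, though for the statement as given — which is only for $(\varphi,\psi)\in\bigl({\mathcal S}(\R^n)\bigr)^2$ — no density argument is even needed.
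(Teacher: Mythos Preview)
Your approach is essentially identical to the paper's: take the one-dimensional Fourier transform in $\sigma$, use the convolution structure of ${\mathcal R}_n$ together with the known transform of $\chi_-^{(1-n)/2}$ and \eqref{RadonFourier}, kill the cross term by the parity coming from \eqref{Symm02}, and finish with polar coordinates and Plancherel on $\R^n$. The paper organizes the computation by treating $\langle\pa_\sigma^j{\mathcal R}_n[\varphi],\pa_\sigma^k{\mathcal R}_n[\psi]\rangle$ for each pair $(j,k)$ separately rather than carrying the sum $\rho^2\widehat\varphi+i\rho\widehat\psi$ through, but this is only a cosmetic difference.

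One concrete slip to fix: the exponent in your formula for $\widetilde{{\mathcal R}_n[\varphi]}$ is off by one. Since ${\mathcal F}_1[\chi_-^{(1-n)/2}](\rho)$ is a unimodular constant times $(2\pi)^{-1/2}(\rho+i0)^{(n-3)/2}$, combining with \eqref{RadonFourier} gives
\[
\bigl|\widetilde{{\mathcal R}_n[\varphi]}(\rho,\omega)\bigr|=\tfrac12\,|\rho|^{(n-3)/2}\,|\widehat\varphi(\rho\omega)|,
\]
not $|\rho|^{(n-1)/2}$; correspondingly $|c_n|=\tfrac12$, not $(2\pi)^{-(n-1)/2}$. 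With the correct exponent your integrand becomes $\tfrac14|\rho|^{n-3}\bigl(\rho^4|\widehat\varphi|^2+\rho^2|\widehat\psi|^2+\text{cross}\bigr)$, and after folding and passing to $d\xi=\rho^{n-1}\,d\rho\,dS(\omega)$ you land exactly on $\tfrac12\bigl(\|\nabla_x\varphi\|_{L^2}^2+\|\psi\|_{L^2}^2\bigr)$ as required. For the cross term, note that the clean symmetry is $(\rho,\omega)\mapsto(-\rho,-\omega)$ from \eqref{Symm02}, under which $\rho\omega$ is fixed and the weight $|\rho|^{n-3}\rho^{3}$ changes sign; this works without assuming $\varphi,\psi$ are real.
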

\begin{proof}
Since we have ${\mathcal R}_n[h](\sigma, \omega)=B_n\bigl(\chi_-^{(1-n)/2}*{\mathcal R}[h](\cdot,\omega)\bigr)(\sigma)$ for $h\in {\mathcal S}(\R^n)$
with $B_n=1/\left(2(2\pi)^{(n-1)/2}\right)$, we get
\begin{equation}
\label{RadonFried}
\widetilde{{\mathcal R}_n[h]}(\rho,\omega)
={\mathcal F}_1\bigl[{\mathcal R}_n[h](\cdot,\omega)\bigr](\rho)
=\sqrt{2\pi}B_n \mathcal{F}_1\bigl[\chi_{-}^{(1-n)/2}\bigr](\rho)
\widetilde{{\mathcal R}[h]}(\rho,\omega),
\end{equation}
where ${\mathcal F}_1$ denotes the one-dimensional Fourier transformation (of a tempered distribution).
It is known that
$$
\mathcal{F}_1[\chi_-^a](\rho)=\frac{1}{\sqrt{2\pi}} e^{i\pi(a+1)/2}(\rho+i0)^{-a-1}
$$
for every $a\in \C$, where $z^b$ for $z\in \C\setminus (-\infty,0]$ and $b\in \C$ is given by
$z^b=\exp(b\log z)$ if we write $\log z=\log |z|+i \arg z$ with $-\pi<\arg z<\pi$,
and $(\rho+i0)^b$ is defined by $(\rho+i0)^{b}=\lim_{\ve \to +0} (\rho+i\ve)^{b}$
(see H\"ormander \cite[Example 7.1.17]{Hoe90-01} for instance).
Especially we have
$$
\bigl|\mathcal{F}_1\bigl[\chi_-^{(1-n)/2}\bigr](\rho)\bigr|^2=\frac{1}{2\pi} |\rho|^{n-3},\quad \rho\in \R.
$$
Therefore, for $\varphi, \psi \in {\mathcal S}(\R^n)$ 
and nonnegative integers $j, k$, it
follows from the Plancherel formula for the one-dimensional Fourier transform 
and \eqref{RadonFried} that
\begin{align}
& \left\langle \pa_\sigma^j {\mathcal R}_n[\varphi], \pa_\sigma^k {\mathcal R}_n[\psi]\right\rangle_{L^2(\R\times \Sp^{n-1})}
\nonumber\\
& \ ={(-i)^ki^{j}B_n^2}\int_{\Sp^{n-1}}\left(\int_{-\infty}^\infty 
\widetilde{{\mathcal R}[\varphi]}(\rho,\omega) \overline{\widetilde{{\mathcal R}[\psi]}(\rho,\omega)}
|\rho|^{n-3}\rho^{j+k} d\rho\right) dS_\omega,
\label{Pla02}
\end{align}
where $dS_\omega$ denotes the surface element on $\Sp^{n-1}$.
By \eqref{Symm02}, we see that the integrand on the right-hand side of \eqref{Pla02}
is an odd (resp.~even) function in $(\rho,\omega)$ if $j+k$ is odd (resp.~even).
Hence we have $\left\langle \pa_\sigma^2{\mathcal R}_n[\varphi], \pa_\sigma {\mathcal R}_n[\psi]\right\rangle_{L^2(\R\times \Sp^{n-1})}=0$.
Now we find
$$
\|\pa_\sigma{\mathcal F}_0[\varphi,\psi]\|_{L^2(\R\times \Sp^{n-1})}^2=\|\pa_\sigma^2{\mathcal R}_n[\varphi]\|_{L^2(\R\times \Sp^{n-1})}^2+\|\pa_\sigma{\mathcal R}_n[\psi]\|_{L^2(\R\times \Sp^{n-1})}^2.
$$
We obtain from \eqref{Pla02}, \eqref{Symm02}, and \eqref{RadonFourier} that
\begin{align*}
\|\pa_\sigma{\mathcal R}_n[\psi]\|_{L^2(\R\times \Sp^{n-1})}^2
=& 2{B_n^2}\int_{\Sp^{n-1}}\left(\int_{0}^\infty 
\left|\widetilde{{\mathcal R}[\psi]}(\rho,\omega)\right|^2 
\rho^{n-1} d\rho\right) dS_\omega
\\
=& \frac{1}{2}\bigl\|\widehat{\psi}\bigr\|_{L^2(\R^n)}^2=\frac{1}{2}\|\psi\|_{L^2(\R^n)}^2.
\end{align*}
Just in the same manner, we obtain
$$
\|\pa_\sigma^2{\mathcal R}_n[\varphi]\|_{L^2(\R\times \Sp^{n-1})}^2
=\frac{1}{2}\bigl\|\,|\cdot|\,\widehat{\varphi}\bigr\|_{L^2(\R^n)}^2=\frac{1}{2}\|\nabla_x \varphi\|_{L^2(\R^n)}^2.
$$
This completes the proof.
\end{proof}

By Lemma~\ref{isometry}, we can uniquely extend the linear mapping
$$
\bigl({\mathcal S}(\R^n)\bigr)^2\ni (\varphi, \psi) \mapsto \pa_\sigma{\mathcal F}_0[\varphi,\psi]
\in L^2(\R\times \Sp^{n-1})
$$
to the linear mapping ${\mathcal T}$ from $H_0(\R^n)$ to $L^2(\R^n)$
with
\begin{equation}
\|{\mathcal T}[\varphi,\psi]\|_{L^2(\R\times \Sp^{n-1})}=\|(\varphi, \psi)\|_{H_0(\R^n)},\quad (\varphi,\psi)\in H_0(\R^n).
\label{Isos}
\end{equation}
This mapping ${\mathcal T}$ is called the {\it translation representation}
in Lax-Phillips~\cite{LaxPhi89}.

The following lemma was essentially proved in \cite{LaxPhi89}
for odd $n$, and in \cite{LaxPhi73} for even $n$
(see also Melrose~\cite{Mel79}). 
\begin{lemma}
The mapping $\mathcal T$ defined above is an isometric isomorphism from $H_0(\R^n)$ to
$L^2(\R\times \Sp^{n-1})$.
\end{lemma}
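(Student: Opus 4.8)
The plan is as follows. By \eqref{Isos} the operator $\mathcal T$ is an isometry, hence injective with closed range, so it remains only to prove that $\mathcal T$ is onto; and since the range is closed it suffices to check that $\mathcal T\bigl[\bigl({\mathcal S}(\R^n)\bigr)^2\bigr]$ is dense in $L^2(\R\times\Sp^{n-1})$. The starting point will be an explicit description of this image after the one-dimensional Fourier transform $\widetilde{\,\cdot\,}$ in the $\sigma$-variable: combining $\mathcal T[\varphi,\psi]=\pa_\sigma{\mathcal F}_0[\varphi,\psi]=-\pa_\sigma^2{\mathcal R}_n[\varphi]+\pa_\sigma{\mathcal R}_n[\psi]$ with \eqref{RadonFried}, \eqref{RadonFourier}, and the formula for ${\mathcal F}_1[\chi_-^{(1-n)/2}]$ recalled in the proof of Lemma~\ref{isometry}, a short computation of exactly the kind carried out there yields
\begin{equation}
\label{PlanTFourier}
\widetilde{\mathcal T[\varphi,\psi]}(\rho,\omega)
=\frac{1}{2}\,e^{i\pi(3-n)/4}\,(\rho+i0)^{(n-3)/2}
\Bigl(\rho^2\,\widehat\varphi(\rho\omega)+i\rho\,\widehat\psi(\rho\omega)\Bigr),
\quad (\rho,\omega)\in\R\times\Sp^{n-1},
\end{equation}
for $(\varphi,\psi)\in\bigl({\mathcal S}(\R^n)\bigr)^2$; taking $L^2$-norms in \eqref{PlanTFourier} and passing to polar coordinates $\xi=\rho\omega$ recovers \eqref{Isos}.

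Next I would show that every $G\in L^2(\R\times\Sp^{n-1})$ orthogonal to the range of $\mathcal T$ vanishes. Expanding $\langle G,\mathcal T[\varphi,\psi]\rangle$ via the Plancherel identity in $\sigma$, inserting \eqref{PlanTFourier}, and carrying out the substitution $\xi=\rho\omega$ separately on $\{\rho>0\}$ and $\{\rho<0\}$ (where $d\rho\,dS_\omega=|\xi|^{-(n-1)}d\xi$), the conditions $\langle G,\mathcal T[\varphi,0]\rangle=\langle G,\mathcal T[0,\psi]\rangle=0$ for all $\varphi,\psi\in{\mathcal S}(\R^n)$ turn into
\begin{align*}
\int_{\R^n}\Bigl(\overline{c(|\xi|)}\,\widehat G_+(\xi)+\overline{c(-|\xi|)}\,\widehat G_-(\xi)\Bigr)\,|\xi|^{3-n}\,\overline{\widehat\varphi(\xi)}\,d\xi&=0,\\
\int_{\R^n}\Bigl(\overline{c(|\xi|)}\,\widehat G_+(\xi)-\overline{c(-|\xi|)}\,\widehat G_-(\xi)\Bigr)\,|\xi|^{2-n}\,\overline{\widehat\psi(\xi)}\,d\xi&=0,
\end{align*}
where $c(\rho):=\tfrac12 e^{i\pi(3-n)/4}(\rho+i0)^{(n-3)/2}$ and $\widehat G_\pm(\xi):=\widetilde G\bigl(\pm|\xi|,\pm|\xi|^{-1}\xi\bigr)$. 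Since $\widehat\varphi$ and $\widehat\psi$ range over the dense class ${\mathcal S}(\R^n)\subset L^2(\R^n)$, both parenthesised factors must vanish for a.e.\ $\xi\ne0$; adding and subtracting, and using $c(\rho)\ne0$ for $\rho\ne0$, forces $\widehat G_\pm\equiv0$, i.e.\ $\widetilde G\equiv0$, hence $G=0$ and the range is dense. (Equivalently, one can argue constructively: for $\widetilde G$ smooth and supported away from $\rho=0$ — a dense class — solve \eqref{PlanTFourier} together with its reflection $\rho\mapsto-\rho$ for $\widehat\varphi,\widehat\psi$, and verify, again by counting the powers of $|\xi|$ in polar coordinates, that the resulting $\varphi$ lies in $\dot H^1(\R^n)$ and $\psi$ in $L^2(\R^n)$; this exhibits $\mathcal T^{-1}$ directly.)

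The constants, signs, and powers of $\rho$ in \eqref{PlanTFourier} and in the change of variables are routine bookkeeping. The one genuinely delicate point — and the reason the references cited above treat even and odd $n$ separately — is the factor $(\rho+i0)^{(n-3)/2}$: for odd $n\ge3$ the exponent is a nonnegative integer, so this is merely a monomial in $\rho$ and there is no branch issue, whereas for even $n$ it is a fractional power, singular at $\rho=0$ and not even under $\rho\mapsto-\rho$, which is why one must keep the $\{\rho>0\}$ and $\{\rho<0\}$ halves apart throughout and check that the weights $|\xi|^{3-n}$ and $|\xi|^{2-n}$ appearing above — together with the factor $|\rho|^{n-3}$ coming from $|c(\rho)|^2$ — still produce square-integrable functions. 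They do, precisely because these exponents cancel, exactly as in the norm computation in Lemma~\ref{isometry}; I expect this to be the main obstacle.
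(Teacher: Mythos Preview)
Your parenthetical ``constructive'' route is precisely what the paper does: it fixes the dense class of $v$ whose one-dimensional Fourier transform $\widetilde v$ lies in ${\mathcal S}(\R\times\Sp^{n-1})$ and vanishes near $\rho=0$, divides $\widetilde v$ by the multiplier $\sqrt{2\pi}B_n{\mathcal F}_1[\chi_-^{(1-n)/2}](\rho)$, and then symmetrises/antisymmetrises in $(\rho,\omega)\mapsto(-\rho,-\omega)$ to produce $\widehat\varphi,\widehat\psi\in{\mathcal S}(\R^n)$ satisfying your identity \eqref{PlanTFourier}. So your alternative sketch and the paper's proof coincide; the only difference is that the paper observes the resulting $\varphi,\psi$ are actually Schwartz, not merely in $\dot H^1\times L^2$.

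Your main route via orthogonality is a legitimate dual formulation, but the last step has a gap. Your claim that ``the exponents cancel'' is correct for the $\psi$-equation but \emph{not} for the $\varphi$-equation: writing $B(\xi)=\overline{c(|\xi|)}\widehat G_+(\xi)+\overline{c(-|\xi|)}\widehat G_-(\xi)$ and passing to polar coordinates, one finds
\[
\bigl\|B(\cdot)|\cdot|^{3-n}\bigr\|_{L^2(\R^n)}^2
\le C\int_{\R\times\Sp^{n-1}}\rho^2\,|\widetilde G(\rho,\omega)|^2\,d\rho\,dS_\omega,
\]
which need not be finite for $G\in L^2(\R\times\Sp^{n-1})$; so density of $\widehat\varphi$ in $L^2(\R^n)$ does not immediately force $B|\cdot|^{3-n}=0$. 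The fix is cheap: either absorb one power of $|\xi|$ into the test function and use instead that $\{|\xi|\,\widehat\varphi:\varphi\in{\mathcal S}(\R^n)\}$ is dense in $L^2(\R^n)$ (then the remaining factor $B(\xi)|\xi|^{2-n}$ \emph{is} in $L^2$, by the same computation as for the $\psi$-equation), or restrict to $\widehat\varphi\in C_c^\infty(\R^n\setminus\{0\})$, where the weight can be absorbed and $B\in L^1_{\rm loc}(\R^n\setminus\{0\})$ suffices. Either way the conclusion $\widehat G_\pm=0$ goes through.
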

\begin{proof}
What is left to show is that ${\mathcal T}$ is surjective.
Because of \eqref{Isos}, we only have to prove the following: For any function $v$ in
some dense subset of $L^2(\R\times \Sp^{n-1})$, there is $(\varphi,\psi)\in H_0(\R^n)$ such that ${\mathcal T}[\varphi,\psi]=v$. 

We write $h\in {\mathcal S}_0(\R\times \Sp^{n-1})$
if $h 
\in{\mathcal S}(\R\times \Sp^{n-1})$
and there is a positive constant $\delta$ such that 
$h(\rho, \omega)=0$ for all $(\rho, \omega)\in (-\delta,\delta)\times \Sp^{n-1}$.
We put
$$
{\mathcal S}_1(\R\times \Sp^{n-1})=\left\{
\widetilde{\mathcal F}^{-1}[h]; h\in {\mathcal S}_0(\R\times \Sp^{n-1})
                                  \right\},
$$
where 
$$
\widetilde{{\mathcal F}}^{-1}[h](\sigma,\omega)=\frac{1}{\sqrt{2\pi}}\int_{-\infty}^\infty e^{i\rho\sigma}
h(\rho, \omega) d\rho,
$$
which is the one-dimensional inverse Fourier transform of $h(\cdot,\omega)$ with a parameter $\omega\in \Sp^{n-1}$. 
It is easy to see that ${\mathcal S}_0(\R\times \Sp^{n-1})$ is dense in
$L^2(\R\times\Sp^{n-1})$, and hence ${\mathcal S}_1(\R\times \Sp^{n-1})$
is also dense in $L^2(\R\times\Sp^{n-1})$.

Let $v\in {\mathcal S}_1(\R\times \Sp^{n-1})$. 
We want to find $(\varphi, \psi)\in H_0(\R^n)$ such that
${\mathcal T}[\varphi,\psi]=v$.
We put
$$
v_0(\rho, \omega)=\frac{1}{\sqrt{2\pi}B_n{\mathcal F}_1\bigl[\chi_-^{(1-n)/2}\bigr](\rho)}
\widetilde{v}(\rho,\omega),\quad (\rho,\omega)\in \R\times \Sp^{n-1}.
$$
Note that we have $v_0\in {\mathcal S_0}(\R\times \Sp^{n-1})$, because
$\widetilde{v}\in \mathcal{S}_0(\R\times \Sp^{n-1})$, and the singularity of 
$1/{\mathcal F}_1\bigl[\chi_-^{(1-n)/2}\bigr]$ lies only at $\rho=0$.
For $\xi\in \R^n\setminus\{0\}$ we put
\begin{align*}
v_1(\xi)=& \frac{v_0(|\xi|, |\xi|^{-1}\xi)+v_0(-|\xi|, -|\xi|^{-1}\xi)}{2(2\pi)^{(n-1)/2}|\xi|^2}, \\
v_2(\xi)=& \frac{v_0(|\xi|, |\xi|^{-1}\xi)-v_0(-|\xi|, -|\xi|^{-1}\xi)}{2i(2\pi)^{(n-1)/2}|\xi|}.
\end{align*}
We also set $v_1(0)=v_2(0)=0$. Then, since $v_0\in{\mathcal S}_0(\R\times \Sp^{n-1})$,
we find that $v_1, v_2\in {\mathcal S}(\R^n)$. Hence 
if we set $\varphi={\mathcal F}^{-1}[v_1]$ and $\psi={\mathcal F}^{-1}[v_2]$,
then we get $(\varphi,\psi)\in \bigl({\mathcal S}(\R^n)\bigr)^2\subset H_0(\R^n)$.
Using \eqref{RadonFourier} and \eqref{RadonFried}, we get
$$
\widetilde{\mathcal F}\left[ \pa_\sigma{\mathcal F}_0[\varphi,\psi]\right](\rho,\omega)=\rho^2
\widetilde{{\mathcal R}_n[\varphi]}(\rho,\omega)+i\rho\widetilde{\mathcal{R}_n[\psi]}(\rho,\omega)=\widetilde{v}(\rho,\omega)
$$
for $(\rho, \omega)\in \R\times \Sp^{n-1}$,
which shows ${\mathcal T}[\varphi,\psi]=\pa_\sigma{\mathcal F}_0[\varphi, \psi]=v$.
This completes the proof.
\end{proof}

Theorem~\ref{AFSE} is an immediate consequence of the following lemma.
\begin{lemma}\label{Final01} Let $c>0$, and $(w_0, w_1)\in H_0(\R^n)$.
Let $w$ be a solution to the Cauchy problem \eqref{FreeWaveEq}--\eqref{FreeData}, and
$$
{\mathcal W}(t,x)=|x|^{-(n-1)/2}{\mathcal T}[w_0, c^{-1}w_1](|x|-ct,|x|^{-1}x)
$$
for $(t,x)\in [0,\infty)\times (\R^n\setminus\{0\})$.
Then we have
$$
\lim_{t\to \infty} 
\bigl\|\pa^{(c)} w(t,\cdot)-\vec{\omega}(\cdot){\mathcal W}(t,\cdot)\bigr\|_{L^2(\R^n)} 
=0,
$$
where $\pa^{(c)}:=(c^{-1}\pa_t, \pa_1, \ldots , \pa_n)$
and $\vec{\omega}(x):=\vec{\omega}_1(x)=(-1, |x|^{-1}x)$.
\end{lemma}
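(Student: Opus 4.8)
The plan is to reduce the limit to a statement about Cauchy data via the isometry \eqref{Isos}, and then exploit density of $\bigl(C^\infty_0(\R^n)\bigr)^2$ in $H_0(\R^n)$ to pass from a pointwise asymptotic result (Lemma~\ref{PointwiseFriedlander}) to the $L^2$-limit. Concretely, I would first introduce the linear operator $\Lambda_t$ that sends Cauchy data $(w_0,w_1)$ to $\pa^{(c)}w(t,\cdot)-\vec{\omega}(\cdot){\mathcal W}(t,\cdot)\in L^2(\R^n;\R^{n+1})$, where $w$ solves \eqref{FreeWaveEq}--\eqref{FreeData}. The goal is $\|\Lambda_t[w_0,w_1]\|_{L^2}\to 0$ for every $(w_0,w_1)\in H_0(\R^n)$.

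**Step 1: uniform bound on $\Lambda_t$.** I would show $\|\Lambda_t[w_0,w_1]\|_{L^2(\R^n)}\le C\|(w_0,w_1)\|_{H_0(\R^n)}$ with $C$ independent of $t$. The contribution $\|\pa^{(c)}w(t,\cdot)\|_{L^2}$ equals $\sqrt{2}\|w(t,\cdot)\|_{E,c}=\sqrt 2\|(w_0,c^{-1}w_1)\|_{H_0(\R^n)}$ by energy conservation. For the term $\vec\omega\,{\mathcal W}$, note $|\vec\omega(x)|^2=2$, so $\|\vec{\omega}\,{\mathcal W}(t,\cdot)\|_{L^2(\R^n)}^2=2\int_{\R^n}|x|^{-(n-1)}|{\mathcal T}[w_0,c^{-1}w_1](|x|-ct,|x|^{-1}x)|^2dx$; passing to polar coordinates $x=r\omega$, $dx=r^{n-1}\,dr\,dS_\omega$, the Jacobian cancels the weight and (substituting $\sigma=r-ct$, using that $r>0$ means $\sigma>-ct$) this is bounded by $2\|{\mathcal T}[w_0,c^{-1}w_1]\|_{L^2(\R\times\Sp^{n-1})}^2=2\|(w_0,c^{-1}w_1)\|_{H_0(\R^n)}^2$ by \eqref{Isos}. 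So $\Lambda_t$ is uniformly bounded.

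**Step 2: convergence on the dense set.** By Step~1 and a standard $\ve/3$ argument, it suffices to prove $\|\Lambda_t[w_0,w_1]\|_{L^2}\to 0$ for $(w_0,w_1)\in\bigl(C^\infty_0(\R^n)\bigr)^2$. For such data, Lemma~\ref{PointwiseFriedlander} gives, with $W={\mathcal F}_0[w_0,c^{-1}w_1]$ and recalling ${\mathcal T}[w_0,c^{-1}w_1]=\pa_\sigma W$, the pointwise bound
$$
\bigl|r^{(n-1)/2}\pa^{(c)}w(t,x)-\vec{\omega}(x)(\pa_\sigma W)(r-ct,\omega)\bigr|\le C\jb{t+r}^{-1}\jb{ct-r}^{(1-n)/2}
$$
(the $c^{-1}$ in front of $\pa_t$ turns $\vec{\omega}_c$ into $\vec{\omega}$, since $\vec{\omega}_c=(-c,\omega)$ and $c^{-1}\cdot(-c)=-1$). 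Multiplying by $r^{-(n-1)/2}$, taking $L^2(\R^n)$ norms in $x$, and converting to polar coordinates so the weight $r^{n-1}$ cancels, I get
$$
\|\Lambda_t[w_0,w_1]\|_{L^2(\R^n)}^2\le C\int_{\Sp^{n-1}}\int_0^\infty \jb{t+r}^{-2}\jb{ct-r}^{1-n}\,dr\,dS_\omega.
$$
For $n\ge 2$ the $r$-integral is $O(\jb{t}^{-2})\cdot\int_0^\infty\jb{ct-r}^{1-n}dr$, and since $\int_{\R}\jb{s}^{1-n}ds$ converges for $n\ge 3$ while for $n=2$ it grows only logarithmically over the range $r\in[0,ct+M]$ (one uses the finite speed of propagation, \eqref{Huygens01} and \eqref{SupportRNGO}, to restrict to $|ct-r|\lesssim ct$), the whole expression is $O(\jb t^{-2}\log\jb t)$ when $n=2$ and $O(\jb t^{-2})$ when $n\ge 3$; either way it tends to $0$.

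**Main obstacle.** The genuinely delicate point is Step~1, i.e.\ verifying that $\vec\omega\,{\mathcal W}(t,\cdot)$ genuinely lies in $L^2(\R^n)$ with the claimed uniform bound for general $H_0$-data — this is exactly where the weight $|x|^{-(n-1)/2}$ is calibrated against the $L^2(\R\times\Sp^{n-1})$-norm of the translation representation, and where the substitution $\sigma=|x|-ct$ together with the (harmless, for an upper bound) extension of the $\sigma$-integral from $(-ct,\infty)$ to $\R$ must be handled. Once that is in place, everything else is the soft functional-analytic reduction to compactly supported data plus the routine polar-coordinate estimate above; the $n=2$ logarithmic loss in the rate is irrelevant since we only claim a limit, not a rate.
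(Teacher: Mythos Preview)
Your proposal is correct and follows essentially the same approach as the paper: an $\ve/3$ argument combining energy conservation for $\pa^{(c)}w$, the isometry \eqref{Isos} for $\vec\omega\,{\mathcal W}$, density of $(C^\infty_0)^2$ in $H_0(\R^n)$, and the pointwise bound from Lemma~\ref{PointwiseFriedlander} for smooth data. The paper carries out the three pieces of the triangle inequality explicitly rather than packaging them as a uniform bound on a linear map $\Lambda_t$, but the content is identical; the paper records the decay rate for smooth data simply as $C(1+t)^{-1}$ without distinguishing dimensions. One small slip: in your $n=2$ discussion you cite \eqref{SupportRNGO}, which applies only to odd $m$; the relevant support statement for $\pa_\sigma{\mathcal F}_0$ on the side $\sigma\ge M$ is \eqref{SupportRNG}, and in any case no support restriction is needed since the factor $\jb{t+r}^{-2}$ already makes the $r$-integral converge with room to spare.
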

\begin{proof}
Let $\ve >0$. There is $(w_0^*,w_1^*)\in \bigl(C^\infty_0(\R^n)\bigr)^2$ such that
$$
\left\|(w_0, c^{-1}w_1)-(w_0^*, c^{-1} w_1^*)\right\|_{H_0(\R^n)}^2<\left(\frac{\ve}{3}\right)^2.
$$
Let $w^*$ and ${\mathcal W}^*$ be defined similarly to $w$ and ${\mathcal W}$,
respectively, by replacing $(w_0, w_1)$
with $(w_0^*, w_1^*)$.
Recalling \eqref{DefNormH0}, we obtain from the energy identity that
\begin{align*}
& \frac{1}{2}\|\pa^{(c)} w(t,\cdot)-\pa^{(c)} w^*(t,\cdot)\|_{L^2(\R^n)}^2 \\
& \quad = \frac{1}{2}\left(\|c^{-1}(w_1-w_1^*)\|_{L^2(\R^n)}^2+\|\nabla_x(w_0-w_0^*)\|_{L^2(\R^n)}^2\right)\\
& \quad = \|(w_0, c^{-1}w_1)-(w_0^*, c^{-1}w_1^*)\|_{H_0(\R^n)}^2 < \left(\frac{\ve}{3}\right)^2.
\end{align*}
Since $|\vec{\omega}(x)|=2$, using \eqref{Isos} we get
\begin{align}
& \frac{1}{2}\bigl\|
 \vec{\omega}(\cdot)\bigl({\mathcal W}(t,\cdot)-{\mathcal W}^*(t,\cdot)\bigr)
\bigr\|_{L^2(\R^n)}^2
\nonumber\\
& \quad =
\int_0^\infty 
\left(\int_{\Sp^{n-1}} |{\mathcal T}[w_0-w_0^*, c^{-1}(w_1-w_1^*)](r-ct,\omega)|^2 dS_\omega\right) dr \nonumber\\
& \quad \le \|{\mathcal T}[w_0-w_0^*, c^{-1}(w_1-w_1^*)]\|_{L^2(\R\times \Sp^{n-1})}^2
\nonumber\\
& \quad =\|(w_0, c^{-1}w_1)-(w_0^*, c^{-1}w_1^*)\|_{H_0(\R^n)}^2< \left(\frac{\ve}{3}\right)^2.
\nonumber
\end{align}
By Lemma~\ref{PointwiseFriedlander} we obtain
\begin{align*}
& \frac{1}{2}\|\pa^{(c)} w^*(t,\cdot)-\vec{\omega}(\cdot){\mathcal W}^*(t,\cdot)\|_{L^2(\R^n)}^2
\\
& \quad \le \frac{1}{2} \int_0^\infty\left(\int_{\Sp^{n-1}} |r^{(n-1)/2} \pa^{(c)} w^{*}(t,r\omega)-
\vec{\omega}(r\omega) (\pa_\sigma {\mathcal F}_0^*) (r-ct,\omega)|^2dS_\omega\right)dr
\\
& \quad \le C \int_0^\infty (1+t+r)^{-2}(1+|t-r|)^{-(n-1)} dr\le C(1+t)^{-1},
\end{align*}
where
${\mathcal F}_0^*(\sigma, \omega)={\mathcal F}_0[w_0^*, c^{-1}w_1^*](\sigma, \omega)$.
Hence there is a positive constant $t_0>0$ such that $t\ge t_0$ implies
$$
\frac{1}{2} \|\pa^{(c)} w^*(t,\cdot)-\vec{\omega}(\cdot){\mathcal W}^*(t,\cdot)\|_{L^2(\R^n)}^2
<\left(\frac{\ve}{3}\right)^2.
$$
To sum up, for any $\ve>0$ there is a positive constant $t_0$ such that we have
$$
\frac{1}{\sqrt{2}} \bigl\|\pa^{(c)} w(t,\cdot)-\vec{\omega}(\cdot){\mathcal W}(t,\cdot)\bigr\|_{L^2(\R^n)} 
<\ve,\quad t\ge t_0.
$$
This completes the proof.
\end{proof}

Now we are in a position to prove Theorem~\ref{AFSE}.
\begin{proof}[Proof of Theorem~$\ref{AFSE}$]
Suppose that there is $V\in L^2(\R\times \Sp^{n-1})$ such that
\begin{equation}
\label{Fin03}
\lim_{t\to \infty} \bigl\|\pa v(t,\cdot)-\vec{\omega}_c(\cdot){V}^\sharp(t,\cdot)\bigr\|_{L^2(\R^n)} 
=0,
\end{equation}
where 
\begin{equation}
\label{Fin01a}
 {V}^\sharp(t,x)=|x|^{-(n-1)/2} V(|x|-ct, |x|^{-1}x).
\end{equation}
Then we get
\begin{equation}
\label{Fin03a}
\lim_{t\to \infty} 
\bigl\|\pa^{(c)} v(t,\cdot)-\vec{\omega}(\cdot){V}^\sharp(t,\cdot)\bigr\|_{L^2(\R^n)} 
=0,
\end{equation}
where $\pa^{(c)}$ and $\vec{\omega}(x)$ are defined as in Lemma~\ref{Final01}.
We define $(w_0, c^{-1}w_1)={\mathcal T}^{-1} [V]\bigl(\in H_0(\R^n)\bigr)$,
and let $w$ be the solution to the Cauchy problem \eqref{FreeWaveEq}--\eqref{FreeData}
for this $(w_0, w_1)$.
Then it follows from Lemma~\ref{Final01} that
\begin{equation}
\label{Fin02}
\lim_{t\to\infty} 
\bigl\|\pa^{(c)} w(t,\cdot)-\vec{\omega}(\cdot){V}^\sharp(t,\cdot)\bigr\|_{L^2(\R^n)} 
=0,
\end{equation}
which, together with \eqref{Fin03a} implies
\begin{equation}
\label{Fin01}
\lim_{t\to\infty} \|v(t,\cdot)-w(t,\cdot)\|_{E,c} 
=\lim_{t\to\infty}\frac{1}{\sqrt{2}}\|\pa^{(c)}v(t,\cdot)-\pa^{(c)}w(t,\cdot)\|_{L^2(\R^n)} 
=0.
\end{equation} 

Conversely, suppose that there is $(w_0, w_1)\in H_0(\R^n)$ such that
\eqref{Fin01} holds for the solution $w$ to the Cauchy problem \eqref{FreeWaveEq}--\eqref{FreeData}.
If we put 
$$
 V(\sigma, \omega)={\mathcal T}[w_0, c^{-1}w_1](\sigma, \omega)\bigl(\in L^2(\R\times \Sp^{n-1})\bigr)
$$
and define ${V}^\sharp$ by \eqref{Fin01a},
then Lemma~\ref{Final01} implies \eqref{Fin02}. 
Now \eqref{Fin01} and \eqref{Fin02} yield \eqref{Fin03a}, which is equivalent to \eqref{Fin03}.
This completes the proof.
\end{proof}
\section{Vector Fields Associated with the Wave Equations}\label{P00}
We restrict our attention to the three space dimensional case from now on.
We introduce vector fields
\begin{align*}
S: = & t\pa_t+x\cdot\nabla_x,\\
\Omega= & (\Omega_1,\Omega_2,\Omega_3):=x\times \nabla_x=(x_2\pa_3-x_3\pa_2, x_3\pa_1-x_1\pa_3, x_1\pa_2-x_2\pa_1),
\end{align*}
where the symbols $\cdot$ and $\times$ denote the inner and exterior products in $\R^3$, respectively.
We also use $\pa=(\pa_t,\nabla_x)=(\pa_a)_{0\le a\le 3}$. We define
$$
Z=(Z_0, Z_1,\ldots, Z_7)=(S,\Omega,\pa)=(S, \Omega_1,\Omega_2, \Omega_3,\pa_0,\pa_1,\pa_2,\pa_3).
$$
These vector fields are compatible with the system of wave equations having
 multiple propagation speeds, since we have
$[\dal_c, \Omega_j]=[\dal_c, \pa_a]=0$ for $1\le j\le 3$ and $0\le a\le 3$, and $[\dal_c, S]=2\dal_c$,
where $c>0$, and $[P,Q]=PQ-QP$ for operators $P$ and $Q$. Using a multi-index $\alpha=(\alpha_0,\alpha_1,\ldots, \alpha_7)$,
we write
$Z^\alpha=Z_0^{\alpha_0}Z_1^{\alpha_1}\cdots Z_7^{\alpha_7}$.
For a nonnegative integer $s$ and a (scalar- or vector-valued) smooth function $\varphi=\varphi(t,x)$, we define
$$
|\varphi(t,x)|_s=\sum_{|\alpha|\le s}| Z^\alpha\varphi(t,x)|.
$$
We can check that we have $[Z_a, Z_b]=\sum_{d=0}^7 C^{ab}_d Z_d$ and $[Z_a, \pa_b]=\sum_{d=0}^3 D^{ab}_d \pa_d$
with appropriate constants $C^{ab}_d$ and $D^{ab}_d$. Hence,
for any multi-indices $\alpha$ and $\beta$, and any nonnegative integer $s$,
there exist some positive constants $C_{\alpha,\beta}$ and $C_s$ such that we have
\begin{align*}
& |Z^\alpha Z^\beta \varphi(t,x)|\le C_{\alpha,\beta}|\varphi(t,x)|_{|\alpha|+|\beta|},\\
& C_s^{-1}|\pa \varphi(t,x)|_s \le \sum_{|\alpha|\le s}\sum_{a=0}^3 |\pa_a Z^\alpha\varphi(t,x)|
\le C_s|\pa\varphi(t,x)|_s
\end{align*}
for any smooth function $\varphi$.

We write $r=|x|$, $\omega=(\omega_1,\omega_2,\omega_3)=|x|^{-1}x$, and $\pa_r=\sum_{j=1}^3\omega_j \pa_j$.
Then we can write $S=t\pa_t+r\pa_r$.
For $c> 0$, we define
$$
\pa_{\pm}^{(c)}=\pa_t\pm c\pa_r,\text{ and } D_\pm^{(c)}=\pm\frac{1}{2c}\pa_\pm^{(c)}=\frac{1}{2}(\pa_r\pm c^{-1}\pa_t).
$$
Since we have $\pa_t=-c\bigl(D_-^{(c)}-D_+^{(c)}\bigr)$, $\pa_r=D_-^{(c)}+D_{+}^{(c)}$, and 
$$
 (1+r)\pa_{+}^{(c)}=cS-(ct-r)\pa_t+(\pa_t+c\pa_r),
$$ 
there exists a positive constant $C$ such that
\begin{align}
&
\bigl|\bigl(\pa_t-(-c)D_{-}^{(c)}\bigr)\varphi(t,x)\bigr|+\bigl|\bigl(\pa_r-D_-^{(c)}\bigr)
\varphi(t,x)\bigr| \nonumber\\
& \qquad\qquad\quad
\le \frac{C}{1+r}\left(|Z\varphi(t,x)|+|ct-r|\,|\pa\varphi(t,x)|\right)
\label{frame01}
\end{align}
for any smooth function $\varphi$. Since we have
$\nabla_x=\omega\pa_r-r^{-1}\omega\times \Omega$,
we get
\begin{equation}
\label{frame02}
|(\pa_k-\omega_k\pa_r)\varphi(t,x)|\le C(1+r)^{-1}|Z\varphi(t,x)|,\quad k=1,2,3.
\end{equation}
From \eqref{frame01} and \eqref{frame02} we obtain
\begin{equation}
\label{frame03}
\left|\pa \varphi(t,x)-\vec{\omega}_c(x)D_-^{(c)}\varphi(t,x)\right|
\le \frac{C}{1+r}\left(|Z\varphi(t,x)|+|ct-r|\,|\pa\varphi(t,x)|\right)
\end{equation}
for any smooth function $\varphi$, where $\vec{\omega}_c(x)$ 
is given by \eqref{speedconst}. As an immediate consequence we also get
\begin{equation}
\label{frame04}
\left|r\pa \varphi(t,x)-\vec{\omega}_c(x)D_{-}^{(c)}\bigl(r\varphi(t,x)\bigr)\right|
\le C\left(|\varphi(t,x)|_1+|ct-r|\,|\pa\varphi(t,x)|\right).
\end{equation}
We remark that the term $|ct-r|\,|\pa \varphi|$ was not needed
in the estimates used in \cite{Kat11} instead of \eqref{frame03} and \eqref{frame04}; 
however the vector field $L=t\nabla_x+x\pa_t$, which is not compatible with the multiple speed case, was involved.
We need the term $|ct-r|\,|\pa \varphi|$ here to compensate the lack of the vector field $L$.
These kinds of identities and estimates without the vector field $L$ were developed 
and used in \cite{KlaSid96}, \cite{Age00}, \cite{Sid00}, \cite{SidTu01}, \cite{Yok00}, and so on
(see also \cite{Kat12}, \cite{KatKub08}, \cite{SidThomases05}, \cite{SidThomases06}, 
and \cite{SidThomases07} for the related topics).


Using \eqref{frame03}, we can easily show the following estimate for the null forms $Q_0$ and $Q_{ab}$ given by \eqref{NullForm01} and \eqref{NullForm02}
(see \cite{Sid00}, \cite{SidTu01} and \cite{Yok00} for the details of the proof):
\begin{lemma}\label{NullEst}
Let $c>0$. Then we have
\begin{align*}
|Q_0(\varphi, \psi;c)|+\sum_{0\le a<b\le 3}
|Q_{ab}(\varphi,\psi)|\le & C\jb{r}^{-1}\left(|Z\varphi|\,|\pa\psi|
{}+|\pa\varphi|\,|Z\psi|\right)\\
&{}+C\jb{r}^{-1}\jb{ct-r}|\pa\varphi|\,|\pa\psi|
\end{align*}
at $(t,x)\in(0,\infty)\times \R^3$ with $r=|x|$.
\end{lemma}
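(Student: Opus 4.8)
The plan is to reduce the null form estimate directly to the frame decomposition estimate \eqref{frame03}. The heuristic is that the null forms $Q_0(\varphi,\psi;c)$ and $Q_{ab}(\varphi,\psi)$ vanish identically when $\pa\varphi$ and $\pa\psi$ are both replaced by their ``bad'' components $\vec\omega_c(x)D_-^{(c)}\varphi$ and $\vec\omega_c(x)D_-^{(c)}\psi$; the point is that the vector $\vec\omega_c(x)=(-c,\omega)$ lies on the null cone ${\mathcal N}$ associated with the speed $c$, and both $Q_0(\cdot,\cdot;c)$ and $Q_{ab}$ are precisely the quadratic forms that degenerate on such null vectors. Hence, writing $\pa_a\varphi=\omega_{c,a}D_-^{(c)}\varphi+E_a[\varphi]$ with the ``error'' $E_a[\varphi]:=\pa_a\varphi-\omega_{c,a}(x)D_-^{(c)}\varphi$, and similarly for $\psi$, the bilinear expression $Q(\varphi,\psi)$ (standing for either $Q_0(\cdot,\cdot;c)$ or $Q_{ab}$) expands into four terms, and the one containing no error factor drops out by the null structure, leaving only terms that carry at least one factor $E_a[\varphi]$ or $E_b[\psi]$.

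The first step is therefore to verify the algebraic identity that the ``leading'' term vanishes: for $Q_0(\cdot,\cdot;c)$ this is the observation that $\omega_{c,0}^2-c^2\sum_{k=1}^3\omega_{c,k}^2=c^2-c^2|\omega|^2=0$, so the symmetric bilinear form $Q_0$ evaluated on the rank-one pair $(\vec\omega_c D_-^{(c)}\varphi,\vec\omega_c D_-^{(c)}\psi)$ equals $(D_-^{(c)}\varphi)(D_-^{(c)}\psi)$ times $Q_0(\vec\omega_c,\vec\omega_c;c)=0$; for $Q_{ab}$ the same pair gives $(D_-^{(c)}\varphi)(D_-^{(c)}\psi)(\omega_{c,a}\omega_{c,b}-\omega_{c,b}\omega_{c,a})=0$ by antisymmetry. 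The second step is to bound the remaining (mixed) terms: each one is a product of a factor of the form $E_a[\varphi]$ (or $E_b[\psi]$) with a factor that is either $\pa\psi$, or $D_-^{(c)}\psi$, or $E_b[\psi]$; using \eqref{frame03} to estimate $|E_a[\varphi]|\le C\jb{r}^{-1}\bigl(|Z\varphi|+|ct-r|\,|\pa\varphi|\bigr)$, and the trivial bounds $|D_-^{(c)}\psi|\le C|\pa\psi|$ and $|E_b[\psi]|\le C|\pa\psi|$ (so that error-times-error is not worse than error-times-$\pa\psi$), every mixed term is dominated by $C\jb{r}^{-1}\bigl(|Z\varphi|+|ct-r|\,|\pa\varphi|\bigr)|\pa\psi|$, and symmetrically by the same with $\varphi$ and $\psi$ interchanged. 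Summing these contributions gives exactly
$$
C\jb{r}^{-1}\bigl(|Z\varphi|\,|\pa\psi|+|\pa\varphi|\,|Z\psi|\bigr)+C\jb{r}^{-1}\jb{ct-r}|\pa\varphi|\,|\pa\psi|,
$$
which is the asserted bound, after noting $|ct-r|\le\jb{ct-r}$.

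The step I expect to require the most care is the bookkeeping in the second step: one must be slightly careful to route the $|ct-r|$ weight so that it multiplies only one $|\pa\cdot|$ factor and never appears squared, and to make sure the $Z$-derivative in \eqref{frame03} lands on a single factor so that one does not accidentally produce $|Z\varphi|\,|Z\psi|$ (which would not be covered). This is handled simply by always pairing an error factor $E_a[\varphi]$ with an \emph{undifferentiated-by-}$Z$ factor, i.e.\ with $\pa\psi$ or $D_-^{(c)}\psi$ or $E_b[\psi]$, all of which are $O(|\pa\psi|)$. Since the statement only claims the estimate pointwise at a fixed $(t,x)$ with no smallness or decay hypotheses, no further analytic input is needed, and the whole argument is a finite algebraic expansion plus a single application of \eqref{frame03}; accordingly the paper defers to \cite{Sid00}, \cite{SidTu01}, and \cite{Yok00} for the routine details.
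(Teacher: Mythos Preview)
Your proposal is correct and is essentially identical to the paper's own proof: the paper writes $R=\pa-\vec{\omega}_c(x)D_-^{(c)}$ (your $E$), notes that the $(D_-^{(c)}\varphi)(D_-^{(c)}\psi)$ contribution cancels by the null structure, bounds $|R\varphi|\,|R\psi|\le |R\varphi|\,|\pa\psi|$ to avoid doubling the error, and then applies \eqref{frame03} exactly as you describe. Your explicit verification of the cancellation for $Q_0$ and $Q_{ab}$ and your bookkeeping remarks spell out precisely what the paper compresses into the phrase ``canceled out because of the structure of the null forms.''
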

\begin{proof}[Outline of proof] Fix $c>0$, and we define $R=(R_a)_{0\le a\le 3}=\pa-\vec{\omega}_c(x)D_-^{(c)}$.
Substituting $\pa=\vec{\omega}_c(x)D_{-}^{(c)}+R$, we get
\begin{align*}
  |Q_0(\varphi,\psi;c)|+\sum_{a,b}|Q_{ab}(\varphi,\psi)|\le & C 
\bigl(\bigl|D_-^{(c)}\varphi\bigr|\,|R\psi|+|R\varphi|\,\bigl|D_-^{(c)}\psi\bigr|+|R\varphi|\,|R\psi|\bigr)\\
  \le & C \left(|\pa\varphi|\,|R\psi|+|R\varphi|\,|\pa\psi|\right),
\end{align*}
where we have used $|R\psi|\le C|\pa \psi|$ to obtain the last line. 
The point here is that terms including $\bigl(D_-^{(c)}\varphi\bigr)\bigl(D_-^{(c)}\psi\bigr)$ are 
canceled out because of the structure of the null forms. Now, using \eqref{frame03}
to evaluate $|R\varphi|$ and $|R\psi|$, we obtain the desired result.
\end{proof}

Let $c_1, \ldots, c_N$ satisfy \eqref{distinct speeds}. We define
\begin{equation}
\label{DefLambda0}
\Lambda_0=\left\{(t,r)\in \R_+ \times \R_+; r\ge \frac{c_1t}{2}\ge 1\right\},
\end{equation}
where $\R_+=[0,\infty)$.
For $1\le j\le N$, we define
\begin{equation}
t_{0, j}(\sigma):=\max\{-2(2c_j-c_1)^{-1}\sigma, 2c_1^{-1}\}
\label{Deft0i}
\end{equation}
so that we have
$\bigcup_{\sigma\in \R}
\{(t, c_j t+\sigma); t\ge t_{0,j}(\sigma)\}=\Lambda_0$.
We also define $r_{0, j}(\sigma):=c_jt_{0, j}(\sigma)+\sigma$ for $\sigma\in \R$.
The following lemma is a modification of a key lemma in \cite{Kat11}
to obtain the asymptotic behavior.
\begin{lemma}\label{KeyLemma}
Assume \eqref{distinct speeds}, and let $j\in\{1,\ldots, N\}$.
Suppose the following:
\begin{itemize}
\item  $\mu_j>1$ and $\kappa_j\ge 0$.
\item $\mu_k\ge 0$ and $\kappa_k>1$ for any $k=1,\ldots, N$ with $k\ne j$.
\end{itemize}
If $v=v(t,r,\omega)\in C^1(\Lambda_0\times \Sp^2)$ satisfies
\begin{equation}
\label{Kanon}
\pa_+^{(c_j)}v(t,r,\omega)=G(t,r,\omega),\quad (t,r)\in \Lambda_0,\ \omega\in \Sp^2,
\end{equation}
and $G$ satisfies
\begin{equation}
\label{Air}
|G(t, r, \omega)|\le \sum_{k=1}^N B_k\jb{t+r}^{-\mu_k}\jb{c_kt-r}^{-\kappa_k},\quad (t,r)\in \Lambda_0,\ \omega\in \Sp^2
\end{equation}
with some positive constants $B_1,\ldots, B_N$, then there
exists a positive constant $C$ such that
\begin{align}
|v(t,r,\omega)-V(r-c_jt,\omega)|\le &
{CB_j}\jb{t+r}^{-\mu_j+1}\jb{c_jt-r}^{-\kappa_j}
\nonumber\\
& {}
+C\sum_{\substack{1\le k\le N \\ k\ne j}} B_k \jb{t+r}^{-\mu_k}
\label{Clannad}
\end{align}
for any $(t,r)\in \Lambda_0$ and $\omega\in \Sp^2$, where $V$ is defined by
\begin{equation}
\label{LittleBusters}
V(\sigma, \omega)=v\bigl(t_{0, j}(\sigma), r_{0, j}(\sigma),\omega \bigr)
{}+\int_{t_{0, j}(\sigma)}^\infty G(s, c_j s+\sigma, \omega) ds.
\end{equation}
The constant $C$ above is determined only by 
$c_k$, $\mu_k$, and $\kappa_k$ with $1\le k\le N$.
\end{lemma}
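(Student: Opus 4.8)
The plan is to integrate the transport equation \eqref{Kanon} along the characteristic rays of $\dal_{c_j}$, which are the lines $r=c_jt+\sigma$ for fixed $\sigma$, and to control the resulting integral using the decay hypothesis \eqref{Air}. First I would fix $\sigma\in\R$ and $\omega\in\Sp^2$, and observe that along the ray $t\mapsto(t,c_jt+\sigma)$ the left-hand side of \eqref{Kanon} is exactly $\frac{d}{dt}\bigl[v(t,c_jt+\sigma,\omega)\bigr]$; this ray stays in $\Lambda_0$ precisely for $t\ge t_{0,j}(\sigma)$ by the choice of $t_{0,j}$ in \eqref{Deft0i}. Integrating from $t_{0,j}(\sigma)$ to $t$ gives
$$
v(t,c_jt+\sigma,\omega)=v\bigl(t_{0,j}(\sigma),r_{0,j}(\sigma),\omega\bigr)+\int_{t_{0,j}(\sigma)}^{t}G\bigl(s,c_js+\sigma,\omega\bigr)\,ds,
$$
so that $v(t,c_jt+\sigma,\omega)-V(\sigma,\omega)=-\int_{t}^{\infty}G(s,c_js+\sigma,\omega)\,ds$, where $V$ is the function in \eqref{LittleBusters}; I would first need to check that this improper integral converges, which follows once the decay estimate for the tail is in place. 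Reverting to the variables $(t,r)$ via $\sigma=r-c_jt$ then yields the identity whose right-hand side is the quantity to be bounded in \eqref{Clannad}.

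The core of the argument is then to estimate $\int_t^\infty |G(s,c_js+\sigma,\omega)|\,ds$ using \eqref{Air}. Along the ray $r=c_js+\sigma$ one has $c_ks-r=(c_k-c_j)s-\sigma$, so for $k\ne j$ the factor $\jb{c_ks-r}$ grows linearly in $s$ (since $c_k\ne c_j$), while $\jb{t+r}\sim\jb{s}$ up to constants depending on the speeds; for $k=j$ the factor $\jb{c_js-r}=\jb{\sigma}$ is constant along the ray. Thus the $k=j$ term contributes $B_j\jb{\sigma}^{-\kappa_j}\int_t^\infty\jb{s+c_js+\sigma}^{-\mu_j}\,ds$, which is $O\bigl(B_j\jb{t+r}^{-\mu_j+1}\jb{c_jt-r}^{-\kappa_j}\bigr)$ because $\mu_j>1$ and $\jb{\sigma}=\jb{c_jt-r}$, $\jb{s+c_js+\sigma}\gtrsim\jb{t+r}$ on the range of integration. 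For $k\ne j$, using $\jb{c_ks-r}\gtrsim\jb{s}$ one bounds the term by $B_k\int_t^\infty\jb{s}^{-\mu_k-\kappa_k}\,ds$; since $\kappa_k>1$ and $\mu_k\ge0$, this is $O\bigl(B_k\jb{t}^{-\mu_k-\kappa_k+1}\bigr)=O\bigl(B_k\jb{t+r}^{-\mu_k}\bigr)$, matching the second sum in \eqref{Clannad}. Summing over $k$ gives \eqref{Clannad}.

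The main obstacle, and the step requiring the most care, is making the comparisons $\jb{c_ks-r}\gtrsim\jb{s}$ for $k\ne j$ and $\jb{t+r}\sim\jb{s}$ uniform in the relevant parameters — in particular, showing that the implied constants depend only on the speeds $c_1,\dots,c_N$ (and then on $\mu_k,\kappa_k$), as claimed in the last sentence of the lemma. This hinges on the geometry of $\Lambda_0$: on $\Lambda_0$ one has $r\ge c_1t/2$ and $r\ge1$, hence $\jb{t+r}$ is comparable to $\jb{r}$ and to $\jb{t}$ with constants depending only on $c_1$; and along $r=c_js+\sigma$ with $s\ge t_{0,j}(\sigma)$ the choice of $t_{0,j}$ forces $r=c_js+\sigma\ge c_1s/2$, so $|\sigma|\le(c_j-c_1/2)s$ and therefore $|c_ks-r|=|(c_k-c_j)s-\sigma|$ can be bounded below by $\delta_0 s$ for $s$ large and $|c_ks-r|\le\jb{s}$-type upper estimates hold, with $\delta_0>0$ depending only on $\min_{k\ne j}|c_k-c_j|$ and $c_1$. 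Once these uniform comparisons are established, the remaining computations are the elementary one-variable integral estimates sketched above, and convergence of the tail integral defining $V$ is automatic from the same bounds.
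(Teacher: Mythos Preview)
Your overall strategy---integrate \eqref{Kanon} along the $c_j$-rays, identify $v-V$ with the tail $-\int_t^\infty G(s,c_js+\sigma,\omega)\,ds$, then estimate the tail term by term---is exactly the paper's, and your treatment of the $k=j$ piece is correct. The gap is in the $k\ne j$ piece.

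You assert that along $r=c_js+\sigma$ one has $\jb{c_ks-r}\gtrsim\jb{s}$ uniformly, and you attempt to justify it by saying that $r\ge c_1s/2$ forces $|\sigma|\le(c_j-c_1/2)s$. But $r\ge c_1s/2$ only gives the one-sided bound $\sigma\ge-(c_j-c_1/2)s$; for $\sigma>0$ the definition $t_{0,j}(\sigma)=2/c_1$ places \emph{no} upper bound on $\sigma$ in terms of $s$. In particular the $c_j$-ray can cross the $c_k$-cone at $s_*=\sigma/(c_k-c_j)$ (whenever this is positive and $\ge t_{0,j}(\sigma)$, which is easily arranged), and there $c_ks-r=0$ while $s_*$ can be as large as you like. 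So the comparison $\jb{c_ks-r}\gtrsim\jb{s}$ is simply false, and with it the bound $\int_t^\infty\jb{s}^{-\mu_k-\kappa_k}\,ds$.

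The paper does not combine the two weights. For $k\ne j$ it freezes the first factor,
\[
\jb{(c_j+1)s+\sigma}^{-\mu_k}\le \jb{(c_j+1)t+\sigma}^{-\mu_k}=\jb{t+r}^{-\mu_k}\qquad(s\ge t,\ \mu_k\ge0),
\]
pulls it out of the integral, and then integrates the second factor over the whole line:
\[
\int_t^\infty\jb{(c_k-c_j)s-\sigma}^{-\kappa_k}\,ds
\le\frac{1}{|c_k-c_j|}\int_{-\infty}^\infty\jb{\tau}^{-\kappa_k}\,d\tau
=\frac{2}{(\kappa_k-1)|c_k-c_j|},
\]
using $\kappa_k>1$. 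This yields exactly $C B_k\jb{t+r}^{-\mu_k}$ with $C$ depending only on $c_k,c_j,\kappa_k$. If you replace your $k\ne j$ estimate by this step, the rest of your argument goes through unchanged.
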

\begin{proof} 
First we note that $V$ is well-defined because of \eqref{Air}.
By \eqref{Kanon} we find
$$
v(t,c_jt+\sigma,\omega)=v\bigl(t_{0,j}(\sigma), r_{0,j}(\sigma),\omega\bigr)
{}+\int_{t_{0,j}(\sigma)}^t G(s, c_js+\sigma, \omega) ds,\quad t\ge t_{0,j}(\sigma)
$$
for $(\sigma,\omega) \in \R\times \Sp^2$. From \eqref{LittleBusters} and \eqref{Air} we get
\begin{equation}
|v(t,c_jt+\sigma,\omega)-V(\sigma, \omega)|
\le \int_{t}^\infty |G(s, c_js+\sigma, \omega)|ds
\le C\sum_{k=1}^N B_k I_k(t,\sigma),
\label{TomoyoAfter}
\end{equation}
where
$$
I_k(t,\sigma)=\int_t^\infty \left. (1+\tau+\rho)^{-\mu_k}(1+|c_k\tau-\rho|)^{-\kappa_k}\right|_{(\tau, \rho)=(s,c_js+\sigma)}ds
$$
for $1\le k\le N$.
By direct calculations, we obtain
\begin{equation}
\label{Planetarian}
I_j(t,\sigma)=\frac{1}{(c_j+1)(\mu_j-1)}(1+(c_j+1)t+\sigma)^{-\mu_j+1}(1+|\sigma|)^{-\kappa_j},
\end{equation}
since $\mu_j>1$.
If $k\ne j$, then we have
\begin{align}
I_k(t,\sigma)\le & (1+(c_j+1)t+\sigma)^{-\mu_k}\int_t^\infty 
(1+|(c_k-c_j)s-\sigma|)^{-\kappa_k}ds 
\nonumber\\
\le & (1+(c_j+1)t+\sigma)^{-\mu_k}\frac{1}{|c_k-c_j|}\int_{-\infty}^\infty(1+|\tau|)^{-\kappa_k}d\tau \nonumber\\
\le & \frac{2}{(\kappa_k-1)|c_k-c_j|}(1+(c_j+1)t+\sigma)^{-\mu_k}
\label{KudoWafter}
\end{align}
for $t\ge t_{0,j}(\sigma)$ and $\sigma\in \R$,
because we have $\mu_k\ge 0$, $\kappa_k>1$, and $c_k\ne c_j$.
Now we obtain \eqref{Clannad} immediately by putting $\sigma=r-c_jt$
in \eqref{TomoyoAfter}, \eqref{Planetarian}, and \eqref{KudoWafter}.
\end{proof}
%
\section{Proof of Theorem~\ref{Main01} and Corollary~\ref{AsympFreeEnergy}}\label{P01}
In the following, we write $r=|x|$ and $\omega=|x|^{-1}x$.
Let the assumptions in Theorem~\ref{Main01} be fulfilled.
Then we have the global solution $u$ to the Cauchy problem \eqref{OurSystem}--\eqref{OurData} for sufficiently small $\ve$
by the global existence theorems in \cite{SidTu01}, \cite{Sog03}, or \cite{Yok00}.
For $j=1, \ldots, N$, we put
$$
{u}_j^1(t,x)=u_j(t,x)-\ve u_j^0(t,x),
$$
where $u_j^0$ is the solution to $\dal_{c_j}u_j^0=0$ with initial data
$u_j^0=f_j$ and $\pa_t u_j^0=g_j$ at $t=0$. 
From \eqref{OurSystem} and \eqref{OurData}, we have
\begin{align}
\label{OurSysM}
& \dal_{c_j} u_j^1=F_j(\pa u, \pa^2 u)\quad\text{ in $(0,\infty)\times \R^3$}, \\
\label{OurDataM}
& u_j^1(0,x)=(\pa_t {u}_j^1)(0,x)=0,\quad x\in \R^3.
\end{align}
We put
$$
U_j^0(\sigma, \omega)={\mathcal F}_0[f_j, c_j^{-1}g_j](\sigma, \omega).
$$
Suppose that we have $f(x)=g(x)=0$ for $|x|\ge M$.
Then, as in \eqref{Huygens01}, \eqref{Huygens02}, and \eqref{SupportRNGO},
we get
\begin{align}
\label{Supportui0}
u_j^0(t,x)=&0,\quad |r-c_jt|\ge M,\\
U_j^0(\sigma,\omega)=&0,\quad |\sigma|\ge M
\label{SupportUi0}
\end{align}
for $1\le j\le N$.
Hence Lemma~\ref{PointwiseFriedlander} implies
\begin{align}
& \left| r u_j^0(t,x)-U_j^0(r-c_jt,x) \right|
{}+\left|
    r \pa u_j^0(t,x)-\vec{\omega}_{c_j}(x)(\pa_\sigma U_j^0)(r-c_jt,x)
   \right|
\nonumber\\
& \qquad\qquad\qquad\qquad\qquad\qquad
\le C\jb{t+r}^{-1}\jb{c_jt-r}^{-1},\quad 1\le j\le N
\label{FreeApprox}
\end{align}
for $(t,x)\in [0,\infty)\times(\R^3\setminus\{0\})$, because \eqref{Supportui0} and \eqref{SupportUi0} 
imply $1\le \jb{c_jt-r}\le \jb{M}$ on
the support of the functions on the left-hand side of \eqref{FreeApprox}.

Let $\Lambda$ 
be the set of $(t,x)\in [0,\infty)\times \R^3$ with $(t,r)\in \Lambda_0$,
where $\Lambda_0$ is given by \eqref{DefLambda0}; namely we put
$$
\Lambda=\left\{(t,x)\in[0,\infty)\times (\R^3\setminus\{0\}); |x|\ge \frac{c_1t}{2}\ge 1\right\}.
$$
We set $\Lambda^{\rm c}=\bigl([0,\infty)\times (\R^3\setminus\{0\})\bigr) \setminus \Lambda$. 
Recall the definitions of $\pa_\pm^{(c)}$ and $D_\pm^{(c)}$ in the previous section.
\begin{proof}[Proof of Theorem~$\ref{Main01}$ $(1)$]
Following the proof of the global existence theorem in
\cite{Kub-Yok01} (see also \cite[Proposition 4.2]{Kat04:02} and its proof) we obtain
\begin{align}
|u_k(t,x)|_2+\ve^{-1}|{u}_k^1(t,x)|_2\le & C\ve \jb{t+r}^{-1}\log \left(1+\frac{1+c_k t+r}{1+|c_k t-r|}\right),
\label{Basic01a}\\
|\pa {u}_k(t,x)|_1+\ve^{-1}|\pa {u}_k^1(t,x)|_1\le & 
C\ve \jb{r}^{-1}\jb{c_kt-r}^{-1}
\label{Basic01b}
\end{align}
for $1\le k\le N$.
We choose small $\delta>0$. Then, by \eqref{Basic01a} we get
\begin{equation}
\label{Basic01a'}
|{u}_k(t,x)|_2+\ve^{-1}|{u}_k^1(t,x)|_2\le C\ve \jb{t+r}^{-1+\delta}\jb{c_kt-r}^{-\delta}
\end{equation}
for $1\le k\le N$, because there is a positive constant $C_\delta$ such that
we have $\log(1+z)\le C_\delta z^\delta$ for $z\ge 1$.

Fix $j=1,2,\ldots, N$.
Switching to the polar coordinates, we get
\begin{equation}
\label{Polar}
r(\dal_{c_j} {u}_j^1)(t, r\omega)
=\pa_{+}^{(c_j)}\pa_{-}^{(c_j)}\bigl(r{u}_j^1(t, r\omega)\bigr)
{}-c_j^2 r^{-1}\Delta_\omega {u}_j^1(t,r\omega)
\end{equation}
for $(t,r,\omega)\in (0,\infty)\times (0,\infty)\times \Sp^2$,
where $\Delta_\omega=\sum_{k=1}^3 \Omega_k^2$ is the Laplace-Beltrami operator
on $\Sp^2$. We put
\begin{equation}
\label{DefVj}
{v}_j(t,r,\omega):=D_{-}^{(c_j)}\bigl(r{u}_j^1(t, r\omega)\bigr).
\end{equation}
Then \eqref{OurSysM} and \eqref{Polar} lead to
\begin{equation}
\label{ReducedEq}
\pa_{+}^{(c_j)}{v}_j(t,r,\omega)=G_j(t,r,\omega), 
\end{equation}
where 
$$
G_j(t,r,\omega):=-\frac{1}{2c_j}\left\{c_j^2 r^{-1}\Delta_\omega {u}_j^1(t, r\omega)+r
F_j\bigl(\pa u(t,r\omega), \pa^2 u(t,r\omega)\bigr)\right\}.
$$

We suppose $(t, r)\in \Lambda_0$ and $\omega\in \Sp^2$ for a while. Note that we have
$$
\jb{1+2c_1^{-1}}^{-1} \jb{t+r}\le r\le \jb{r}\le \jb{t+r},\quad (t, r)\in \Lambda_0.
$$
From \eqref{Basic01a'} we obtain 
\begin{equation}
\label{Est01}
|r^{-1}\Delta_\omega {u}_j^1(t,r\omega)|\le
C\ve^2 \jb{t+r}^{-2+\delta} \jb{c_jt-r}^{-\delta}.
\end{equation}
Recall the definitions of $N_j$, $R_j^{\rm I}$, and $R_j^{\rm II}$
given in \eqref{DefNullT}, \eqref{DefNonResI}, and \eqref{DefNonResII},
respectively.
By Lemma~\ref{NullEst}, \eqref{Basic01b}, and \eqref{Basic01a'},
we get 
\begin{equation}
\label{Est02}
r|N_j(\pa u, \pa^2 u)|\le C\ve^2\jb{t+r}^{-2+\delta}\jb{c_jt-r}^{-1-\delta}.
\end{equation}
Noting that we have
$$
\jb{c_kt-r}\jb{c_lt-r}\ge C \jb{t+r}\min\{\jb{c_k t-r}, \jb{c_l t-r}\}
$$
for $c_k\ne c_l$, we obtain from \eqref{Basic01b} that
\begin{align}
r|R_j^{\rm I}(\pa u, \pa^2 u)|\le & C\ve^2 \jb{t+r}^{-1}
\sum_{k\ne l} \jb{c_kt-r}^{-1}\jb{c_lt-r}^{-1}
\nonumber\\
\le & C\ve^{2}\jb{t+r}^{-2} \sum_{k=1}^N \jb{c_kt-r}^{-1}. 
\label{Est03}
\end{align}
By \eqref{Basic01b} we also have
\begin{equation}
\label{Est04}
r|R_j^{\rm II}(\pa u, \pa^2 u)|\le C\ve^2 
\sum_{k\ne j} \jb{t+r}^{-1} \jb{c_kt-r}^{-2}.
\end{equation}
Now \eqref{Est01}, \eqref{Est02}, \eqref{Est03}, and \eqref{Est04} lead to
\begin{align}
|G_j(t,r,\omega)|\le & C\ve^2 \jb{t+r}^{-2+\delta} \jb{c_jt-r}^{-\delta}
\nonumber\\
& 
{}+C\ve^2\sum_{k\ne j} \jb{t+r}^{-1} \jb{c_k t-r}^{-2}.
\label{Est05}
\end{align}

We define 
\begin{equation}
\label{Construct01}
{V}_j(\sigma,\omega)={v}_j\bigl(
 t_{0,j}(\sigma),r_{0,j}(\sigma),\omega
\bigr)+\int_{t_{0,j}(\sigma)}^\infty
G_j(s, c_js+\sigma, \omega) ds,
\end{equation}
where $t_{0,j}(\sigma)$ is defined by \eqref{Deft0i}, and $r_{0,j}(\sigma)=c_jt_{0,j}(\sigma)+\sigma$.
Since we have \eqref{ReducedEq} and \eqref{Est05}, Lemma~\ref{KeyLemma} implies
\begin{align}
|{v}_j(t, r, \omega)-{V}_j(r-c_jt,\omega)|\le &
C\ve^2 \left(\jb{t+r}^{-1+\delta}\jb{c_jt-r}^{-\delta} 
+\jb{t+r}^{-1}\right)
\nonumber\\
\le & C\ve^2\jb{t+r}^{-1+\delta}\jb{c_jt-r}^{-\delta}
\label{Est06}
\end{align}
for $(t,r)\in \Lambda_0$ and $\omega\in \Sp^2$.
By \eqref{DefVj}, \eqref{Basic01b}, and \eqref{Basic01a'}, we get
\begin{align}
|{v}_j(t, r, \omega)|
\le & C\left(r|\pa u_j^1(t,r\omega)|+|u_j^1(t,r\omega)|\right)
\nonumber\\
\le & C\ve^2
\left(\jb{c_jt-r}^{-1}+\jb{t+r}^{-1+\delta}\jb{c_jt-r}^{-\delta}\right)
\nonumber\\
\le & C\ve^2 \jb{c_j t-r}^{-1}.
\end{align}
Hence, putting $r=c_jt+\sigma$ in \eqref{Est06}, we get
\begin{align}
|V_j(\sigma, \omega)|\le & |v_j(t, c_jt+\sigma, \omega)|+C\ve^2\jb{(c_j+1)t+\sigma}^{-1+\delta}
\jb{\sigma}^{-\delta}\nonumber\\
\le & C\ve^2\bigl(\jb{\sigma}^{-1}+\jb{(c_j+1)t+\sigma}^{-1+\delta}
\jb{\sigma}^{-\delta}\bigr)
\label{BlackCat}
\end{align}
for all $(\sigma, \omega)\in \R\times \Sp^{n-1}$ and $t\ge t_{0,j}(\sigma)$.
Taking the limit in \eqref{BlackCat} as $t\to\infty$, 
we obtain
\begin{equation}
\label{Est07}
|{V}_j(\sigma, \omega)|\le C\ve^2 \jb{\sigma}^{-1},\quad (\sigma, \omega)\in \R\times \Sp^2.
\end{equation}
Now we define ${P}_j^1(\sigma,\omega)=\ve^{-1}{V}_j(\sigma, \omega)$.
Then, recalling the definition of ${v}_j$,
we obtain from \eqref{frame04}, \eqref{Basic01b}, and \eqref{Basic01a'}
that
$$
\left|r\pa {u}_j^1(t,x)-\vec{\omega}_{c_j}(x){v}_j(t, r, \omega)\right|
\le C\ve^2 \jb{t+r}^{-1+\delta}\jb{c_jt-r}^{-\delta},\quad (t,x)\in \Lambda.
$$
Therefore \eqref{Est06} leads to
\begin{equation}
\label{NonlinApprox01}
\left|r\pa {u}_j^1(t,x)-\ve \vec{\omega}_{c_j}(x){P}_j^1(r-c_jt,\omega)\right|
\le C\ve^2 \jb{t+r}^{-1+\delta}\jb{c_jt-r}^{-\delta}
\end{equation}
for $(t,x)\in \Lambda$.
Finally we define
\begin{equation}
P_j(\sigma, \omega)=\pa_\sigma U_j^0(\sigma, \omega)
{}+{P}_j^1(\sigma,\omega).
\label{FiveTwentyFive}
\end{equation}
Then \eqref{FreeApprox} and \eqref{NonlinApprox01} yield \eqref{Main11}
for $(t,x)\in \Lambda$.
Noting that \eqref{Est07} and the definition of $P_j^1$ lead to
\begin{equation}
\label{PseudoTwoFive}
|P_j^1(\sigma, \omega)|\le C\ve \jb{\sigma}^{-1},\quad (\sigma, \omega)\in \R\times \Sp^{2},
\end{equation}
and recalling \eqref{SupportUi0}, we immediately obtain \eqref{Main12} by \eqref{FiveTwentyFive}.

Now what is left to prove is \eqref{Main11} for $(t,x)\in \Lambda^{\rm c}$.
Since we have either $t\le 2c_1^{-1}$ or $r\le c_1t/2(\le c_jt/2)$ in $\Lambda^{\rm c}$, we get $\jb{c_jt-r}^{-1}\le C\jb{t+r}^{-1}$. 
Therefore, by \eqref{Basic01b} we get
\begin{equation}
\label{KL01}
|r\pa u_j(t,x)|\le C\ve\jb{c_jt-r}^{-1}\le 
C\ve\jb{t+r}^{-1+\delta}\jb{c_jt-r}^{-\delta}
\end{equation}
for $(t,x)\in \Lambda^{\rm c}$.
It follows from \eqref{SupportUi0}, \eqref{PseudoTwoFive}, and \eqref{FiveTwentyFive} that
$$
|P_j(\sigma, \omega)|\le C\jb{\sigma}^{-1},\quad (\sigma, \omega)\in \R\times \Sp^2,
$$
which immediately yields
\begin{equation}
\label{KL02}
|P_j(r-c_jt,\omega)|\le C\jb{c_jt-r}^{-1}\le C\jb{t+r}^{-1+\delta}\jb{c_jt-r}^{-\delta}
\end{equation}
for $(t,x)\in \Lambda^{\rm c}$.
From \eqref{KL01} and \eqref{KL02} we obtain 
$$
|r\pa u_j(t, x)-\ve \vec{\omega}_{c_j}(x)P_j(r-c_jt,\omega)|\le C\ve \jb{t+r}^{-1+\delta}\jb{c_jt-r}^{-\delta},
\quad (t, x)\in \Lambda^{\rm c},
$$
which is
\eqref{Main11} for 
$(t,x)\in \Lambda^{\rm c}$. This completes the proof.
\end{proof}
\begin{proof}[Proof of Theorem~$\ref{Main01}$ $(2)$] 
Suppose that $R^{\rm II}$ has the null structure,
and fix $\rho\in (1/2, 1)$.
Then following the proof
of the global existence theorem in \cite{Kat05}
(see Proposition~5.2 and its proof in \cite{Kat05} specifically), we have 
\begin{align}
|{u}_k(t,x)|_2+\ve^{-1}|{u}_k^1(t,x)|_2\le & C\ve \jb{t+r}^{-1}\jb{c_kt-r}^{-1},
\label{Basic02a}
\\
|\pa {u}_k(t,x)|_2+\ve^{-1}|\pa {u}_k^1(t,x)|_2\le & C\ve \jb{r}^{-1}\jb{c_kt-r}^{-1-\rho},
\label{Basic02b00}
\end{align}
which are better than \eqref{Basic01a} and \eqref{Basic01b}.
Using decay estimates in \cite{KatYok06} (see Lemmas~3.2 and 6.1 in \cite{KatYok06}), we can further improve \eqref{Basic02b00} in the decay rate and we get
\begin{equation}
|\pa {u}_k(t,x)|_1+\ve^{-1}|\pa {u}_k^1(t,x)|_1\le C\ve \jb{r}^{-1}\jb{c_kt-r}^{-2}.
\label{Basic02b}
\end{equation}
Indeed, similarly to \eqref{Est02} and \eqref{Est03}, we obtain from \eqref{Basic02a} and \eqref{Basic02b00}
that 
\begin{align*}
r |N_j|_1 \le & C\ve^2\left(\jb{t+r}^{-1}\jb{r}^{-1}\jb{c_jt-r}^{-2-\rho}+\jb{r}^{-2}\jb{c_jt-r}^{-1-2\rho}\right)\\
\le & C\ve^2 \jb{t+r}^{-2}w_-(t,r)^{-1-2\rho},\\
r|R_j^{\rm I}|_1\le & C\ve^2 \jb{r}^{-1}\sum_{k\ne l} \jb{c_kt-r}^{-1-\rho}\jb{c_lt-r}^{-1-\rho}\\
\le & C\ve^2 \jb{t+r}^{-2} w_-(t,r)^{-1-2\rho}
\end{align*}
at $(t,x)\in (0,\infty)\times \R^3$,
where $w_-(t,r)=\min_{0\le k\le N} \jb{c_kt-r}$ with $c_0=0$. Here we have used the estimates
$\jb{r}\jb{c_jt-r}\ge C \jb{t+r}w_-(t,r)$ and
$$
\jb{r}\jb{c_kt-r}\jb{c_lt-r}\ge C\jb{t+r}^2w_-(t,r),\quad k\ne l.
$$
Since $R^{\rm II}$ has the null structure, we see that $R^{\rm II}$ enjoys the same estimate
as $N_j$.
Now we have proved that 
\begin{equation}
r|F(\pa u, \pa^2 u)|_1\le C\ve^2 \jb{t+r}^{-2}w_-(t,r)^{-1-2\rho},
\label{Kiririn}
\end{equation}
and we obtain \eqref{Basic02b} from the following estimate,
which comes from Lemmas~3.2 and 6.1 in \cite{KatYok06}
(especially see the estimates (3.7), (6.1) and (6.2) in \cite{KatYok06}):
For $c, \kappa, \mu>0$, it holds that
\begin{align*}
& \jb{r}\jb{ct-r}^{1+\kappa}|\pa \phi(t,x)|\\
& \qquad \le C\sup_{y\in \R^3} \jb{y}^{2+\kappa}
 |\pa \phi(\tau,y)|_1\bigr|_{\tau=0}\\
& \qquad\quad {}+C\sup_{(\tau,y)\in[0,t]\times \R^3} |y|\jb{\tau+|y|}^{1+\kappa}w_-(\tau, |y|)^{1+\mu}
 |(\dal_c\phi)(\tau,y)|_1.
\end{align*}
 
Let $(t,r)\in \Lambda_0$ and $\omega\in \Sp^2$ for a while. Recall the proof of (1). 
We use \eqref{Basic02a} and \eqref{Basic02b}, instead of \eqref{Basic01a} (or \eqref{Basic01a'})
and \eqref{Basic01b}.
Then \eqref{Est01}, \eqref{Est02}, and \eqref{Est03} are replaced by
\begin{align*}
|r^{-1}\Delta_\omega{u}_j^1(t, r\omega)|\le  & C\ve^2\jb{t+r}^{-2}\jb{c_jt-r}^{-1},\\
r|N_j(\pa u, \pa^2 u)|\le & C\ve^2 \jb{t+r}^{-2}\jb{c_jt-r}^{-3},\\
r|R_j^{\rm I}(\pa u, \pa^2 u)|\le & C\ve^2 \jb{t+r}^{-3}\sum_{k=1}^N\jb{c_kt-r}^{-2},
\end{align*}
respectively. Since $R^{\rm II}$ has the null structure, we can use Lemma~\ref{NullEst} to get
$$
r|R_j^{\rm II}(\pa u, \pa^2 u)|\le C\ve^2\sum_{k\ne j}\jb{t+r}^{-2}\jb{c_kt-r}^{-3},
$$
instead of \eqref{Est04}.
These estimates lead to
\begin{equation}
|G_j(t,r,\omega)|\le C\ve^2 \jb{t+r}^{-2}\left( \jb{c_jt-r}^{-1}
{}+\sum_{k\ne j}\jb{c_kt-r}^{-3}\right)
\end{equation}
for $(t,r)\in \Lambda_0$ and $\omega\in \Sp^2$.
Now going similar lines to \eqref{Construct01} through \eqref{PseudoTwoFive},
we can construct ${P}_j^1$ satisfying
\begin{equation}
\label{NonlinApprox02}
\left|r\pa {u}_j^1(t, x)-\ve \vec{\omega}_{c_j}(x) {P}_j^1(r-c_jt, \omega)\right|\le C\ve^2 \jb{t+r}^{-1}\jb{c_jt-r}^{-1}
\end{equation}
for $(t,x)\in \Lambda$, and
\begin{equation}
\label{Est31}
|{P}_j^1(\sigma, \omega)|\le C\ve \jb{\sigma}^{-2},\quad (\sigma, \omega)\in \R\times \Sp^2.
\end{equation}
We define
\begin{equation}
\label{DefUi}
{U}_j^1(\sigma, \omega):=-\int_\sigma^\infty {P}_j^1(\lambda,\omega) d\lambda
\end{equation}
so that we have $\pa_\sigma {U}_j^1(\sigma, \omega)={P}_j^1(\sigma, \omega)$.
Note that the right-hand side of \eqref{DefUi} is finite because of \eqref{Est31},
and that $U_j^1(\sigma, \omega)\to 0$ as $\sigma\to\infty$.
It follows from \eqref{NonlinApprox02} and \eqref{Basic02a} that
\begin{equation}
\label{NonlinApprox03}
\left|\pa_r\bigl(r {u}_j^1(t, r\omega)\bigr)-\ve (\pa_\sigma {U}_j^1)(r-c_jt, \omega)\right|
\le C\ve^2 \jb{t+r}^{-1}\jb{c_jt-r}^{-1}
\end{equation}
for $(t,x)\in \Lambda$.
Since \eqref{Basic02a} implies $ru_j^1(t,r\omega)\to 0$ as $r\to \infty$,
from \eqref{NonlinApprox03} we get
\begin{align}
|r{u}_j^1(t,r\omega)-\ve {U}_j^1(r-c_jt, \omega)|\le & \left|-\int_r^\infty 
\pa_\lambda\bigl(\lambda {u}_j^1(t,\lambda\omega)-\ve {U}_j^1
(\lambda-c_jt, \omega)\bigr)d\lambda \right|
\nonumber\\
\le & C\ve^2\int_r^\infty (1+t+\lambda)^{-1}(1+|c_jt-\lambda|)^{-1}d\lambda. \label{Lost}
\end{align}
If $r\ge 3c_jt/2$, then we have
$$
\int_r^\infty (1+t+\lambda)^{-1}(1+|c_jt-\lambda|)^{-1}d\lambda\le C\int_r^\infty(1+t+\lambda)^{-2}d\lambda
\le C(1+t+r)^{-1}.
$$
For $r$ satisfying $1\le c_1t/2\le r\le 3c_jt/2$, we get
\begin{align*}
\int_r^\infty (1+t+\lambda)^{-1}(1+|c_jt-\lambda|)^{-1}d\lambda \le &
C(1+t)^{-1}\int_{c_1t/2}^{3c_jt/2}(1+|c_jt-\lambda|)^{-1}d\lambda\\
& +C\int_{3c_jt/2}^\infty(1+t+\lambda)^{-2}d\lambda \nonumber\\
\le & C(1+t+r)^{-1}\log(2+t).
\nonumber
\end{align*}
Now it follows from \eqref{Lost} that
\begin{equation}
\label{NonlinApprox04}
\left|r {u}_j^1(t, x)-\ve {U}_j^1(r-c_jt, \omega)\right| \le C\ve^2 \jb{t+r}^{-1}\log(2+t),\quad
(t,x)\in \Lambda.
\end{equation}
For $(\sigma, \omega)\in \R\times \Sp^2$, by \eqref{NonlinApprox04} and \eqref{Basic02a} we get
$$
|{U}_j^1(\sigma, \omega)|\le C\ve \jb{(c_j+1)t+\sigma}^{-1}\log(2+t)+C\ve\jb{\sigma}^{-1}
$$
for sufficiently large $t$. Now taking the limit as $t\to\infty$, we obtain
\begin{equation}
\label{Est32}
|{U}_j^1(\sigma, \omega)|\le C\ve \jb{\sigma}^{-1}.
\end{equation}
Finally, putting
$$
U_j(\sigma, \omega)=U_j^0(\sigma,\omega)+{U}_j^1(\sigma,\omega),
$$
we obtain \eqref{Main31} and \eqref{Main32} for $(t,x)\in \Lambda$ from \eqref{FreeApprox}, \eqref{NonlinApprox02},
and \eqref{NonlinApprox04}.  \eqref{Main33} is an immediate consequence of \eqref{Est31} and
\eqref{Est32}. 

As in the proof of (1), using \eqref{Basic02a}, \eqref{Basic02b}, \eqref{Est31}, and \eqref{Est32},
as well as \eqref{SupportUi0}, we can show that \eqref{Main31} and \eqref{Main32} hold also for $(t,x)\in \Lambda^{\rm c}$.
This completes the proof.
\end{proof}

Now we are in a position to prove Corollary~\ref{AsympFreeEnergy}.
\begin{proof}[Proof of Corollary~$\ref{AsympFreeEnergy}$]
Let the assumptions of Corollary~$\ref{AsympFreeEnergy}$ be fulfilled.
Let $P_j$ for $1\le j\le N$ be from Theorem~\ref{Main01} (1).
Recalling \eqref{SupportUi0}, we see from \eqref{Main12} that 
$|P_j(\sigma, \omega)|\le C \jb{\sigma}^{-1}$, which implies
$P_j\in L^2(\R\times \Sp^2)$.
If we put $P_j^\sharp(t,x)=|x|^{-1}P_j(|x|-c_jt, |x|^{-1}x)$, it follows from
\eqref{Main11} that
\begin{align*}
& \bigl\|\pa u_j(t,\cdot)-\ve \vec{\omega}_{c_j}(\cdot)P_j^\sharp(t,\cdot)
\bigr\|_{L^2(\R^3)}^2\\
& \quad = \int_0^\infty \int_{\Sp^2} |r\pa u_j(t,r\omega)-\ve\vec{\omega}_{c_j}(r\omega)P_j(r-c_jt,\omega)|^2 dS_\omega dr\\
& \quad \le C\ve^2 \int_0^\infty \jb{t+r}^{-2+2\delta}\jb{c_jt-r}^{-2\delta} dr
\le C\ve^2(1+t)^{-1} \to 0,\quad t\to\infty.
\end{align*}
Now Theorem~\ref{AFSE}
(with $n=3$, $c=c_j$, $v=u_j$, and $V=\ve P_j$) 
implies that each $u_j$ is asymptotically free in the energy sense.
This completes the proof.
\end{proof}
\section{Proof of Theorem~\ref{NotFree}}\label{P02}
Suppose that $\ve$ is sufficiently small, and
let $u$ be the global solution to \eqref{OurExample} 
with initial data $u=\ve f$ and $\pa_t u=\ve g$ at $t=0$.
Since the last half of \eqref{Main41} follows from $\eqref{Basic01a}$
for any $f, g\in C^\infty_0(\R^3)$,
we will prove the existence of $(f,g)$ for which the first half holds.
Without loss of generality, we may assume $A_1>0$ and $A_2\ge 0$.
We may also assume $c_1=1<c=c_2$.

We suppose that $f\equiv 0$, and that $g=(g_1,g_2)$ is radially symmetric, i.e.,
$g_j(x)={g}_j^*(|x|)$ with some function ${g}_j^*$ for $j=1,2$. 
Then $u=(u_1,u_2)$ is also radially symmetric in $x$-variable,
i.e., $u_j(t,x)={u}_j^*(t, |x|)$ with some ${u}_j^*(t,r)$ for $j=1,2$.
For $r\in \R$, we put
$$
h_j(r):=\frac{r}{2}{g}_j^*(|r|),\ v_j(t,r):=r {u}_j^*(t,|r|),\quad j=1,2.
$$
From \eqref{OurExample} we obtain
\begin{equation}
\label{Exp01}
v_j(t,r)=\frac{\ve}{c_j}\int_{r-c_jt}^{r+c_jt} h_j(\lambda)d\lambda
{}+\frac{1}{c_j}\int_0^t\left(\int_{r-c_j(t-\tau)}^{r+c_j(t-\tau)}G_j(\tau, \lambda)d\lambda\right)d\tau
\end{equation}
for $j=1,2$, where
\begin{align*}
G_1(t,r)=& \frac{A_1r}{2} \bigl(\pa_t {u}_2^*(t, |r|)\bigr)^2
=\frac{A_1}{2r}\bigl(\pa_t v_2(t,r)\bigr)^2,\\
G_2(t,r)=& \frac{A_2r}{2} \bigl(\pa_t {u}_1^*(t, |r|)\bigr)^2
=\frac{A_2}{2r}\bigl(\pa_t v_1(t,r)\bigr)^2.
\end{align*}

Assume that $g_2^*$ is a nonnegative function.
Since we have
\begin{align*}
\pa_tv_2(t,r)=& \ve\left\{h_2(r+c t)+h_2(r-c t)\right\} \\
& {}+\int_0^t\left\{G_2\bigl(\tau,r+c(t-\tau)\bigr)+G_2\bigl(\tau,r-c(t-\tau)\bigr)\right\}d\tau,
\end{align*}
we get
\begin{equation}
\label{Exp02}
\pa_t v_2(t, r)\ge \ve h_2(r-ct)(\ge 0),
\end{equation}
provided that $r-ct\ge 0$.
We fix $M>0$, and 
assume that 
$$
 C_0:=\int_{0}^M |h_2(\lambda)|^2d\lambda>0.
$$

We suppose that $0\le \sigma=r-t\le M$ in what follows.
We put
$$
\tau_0=\frac{r-t}{c-1}=\frac{\sigma}{c-1},\quad \tau_1=\frac{r+t-M}{c+1}=\frac{2t+\sigma-M}{c+1},
$$
so that we have $c\tau_0=r-t+\tau_0$ and $c\tau_1+M=r+t-\tau_1$.
Let $2t\ge (c-1)^{-1}(c+1)M$ hold, so that we have $\tau_1\ge \tau_0$.
Then it is easy to see that ${\mathcal E}\subset {\mathcal D}$, where
\begin{align*}
{\mathcal D}=&\{(\tau,\lambda);\, 0\le \tau\le t,\, r-t+\tau\le \lambda\le r+t-\tau\},\\
{\mathcal E}=&\{(\tau,\lambda);\, \tau_0\le \tau\le \tau_1,\, c\tau\le \lambda\le c\tau+M\}.
\end{align*}
Therefore, noting that we have $G_1(t,r)\ge 0$ for $r>0$, we get
\begin{align*}
\iint_{{\mathcal D}} G_1(\tau, \lambda) d\tau d\lambda
\ge & \iint_{{\mathcal E}} G_1(\tau, \lambda) d\tau d\lambda.
\end{align*}
From the definition of $G_1$ and \eqref{Exp02}, we obtain
\begin{align}
\iint_{{\mathcal E}} G_1(\tau, \lambda) d\tau d\lambda
\ge & \frac{A_1\ve^2}{2}\int_{\tau_0}^{\tau_1}
\left(\int_{c\tau}^{c\tau+M} \frac{|h_2(\lambda-c\tau)|^2}{\lambda}d\lambda\right)d\tau\nonumber\\
\ge & \frac{A_1\ve^2}{2}\int_{\tau_0}^{\tau_1} \frac{1}{c\tau+M}
\left(\int_0^M |h_2(\lambda)|^2d\lambda\right)d\tau \nonumber\\
=& \frac{C_0A_1\ve^2}{2c}\log \frac{c\tau_1+M}{c\tau_0+M}
\ge \frac{C_0A_1\ve^2}{2c}\log \frac{(c-1)(2ct+M)}{(c+1)(2c-1)M}.
\label{Exp05}
\end{align}

Now we choose a nonnegative function $g_1^*$ satisfying
$\int_{M}^{M+1} h_1(\lambda)d\lambda\ge 1$.
Then we get
\begin{equation}
\label{Exp06}
\int_{r-t}^{r+t} h_1(\lambda)d\lambda\ge 1
\end{equation}
for $(t,r)$ with $0\le r-t\le M$ and $t \ge M+1$

From \eqref{Exp01} with $j=1$, \eqref{Exp05}, and \eqref{Exp06}, we obtain
$$
v_1(t,r)\ge C\bigl(\ve+\ve^2\log(2+t)\bigr)
$$
for $0\le r-t\le M$ and $t\gg 1$. This completes the proof. \qed
\section*{Acknowledgments}
This work is partially supported by Grant-in-Aid for Scientific Research (C)
(No.~20540211 and No.~23540241), JSPS.

\end{document}